\documentclass[12pt] {article}
\usepackage{float}
\usepackage{delarray}
\usepackage{amsmath, latexsym, amsfonts, amssymb, amsthm, amscd,enumerate,hyperref}
\usepackage{graphicx}

\renewcommand{\leq}{\leqslant}
\renewcommand{\geq}{\geqslant}

\newcommand{\E}{\mathbb{E}}

\newcommand{\K}{\mathcal{K}}

\newcommand{\tr}{\operatorname{tr}}
\newcommand{\Tr}{\operatorname{Tr}}

\DeclareMathOperator{\id}{id}

\renewcommand{\phi}{\varphi}

\newtheorem{theorem}{Theorem}
\newtheorem{proposition}[theorem]{Proposition}
\newtheorem{remark}[theorem]{Remark}
\newtheorem{lemma}[theorem]{Lemma}

\newtheorem{corollary}[theorem]{Corollary}

\theoremstyle{definition}
\newtheorem{example}[theorem]{Example}
\newtheorem{definition}[theorem]{Definition}

%%%%%%%%%%%%%

\usepackage{delarray}
\usepackage{amsmath, latexsym, amsfonts, amssymb, amsthm, amscd}
\usepackage{delarray}
\usepackage{amsmath, mathdots, latexsym, amsfonts, amssymb, amsthm, amscd}
\usepackage{graphicx}
\usepackage{graphicx}
\usepackage{graphicx,,psfrag}
\usepackage{amssymb,amsmath,amsthm,enumerate}
\usepackage{color}
\addtolength{\topmargin}{-0.0cm} \setlength{\footskip}{0.5cm}
\newcommand{\C}{\mathbb{C}}

\newcommand{\1}{1\!\!{\sf I}}

\begin{document}
\title{Outlier eigenvalues for non-Hermitian  polynomials in independent i.i.d. matrices  and deterministic matrices}
\author{Serban Belinschi\thanks{Institut de Math\'ematiques de Toulouse; UMR5219; Universit\'e de Toulouse; CNRS;
 UPS, F-31062 Toulouse, FRANCE. 
E-mail: serban.belinschi@math.univ-toulouse.fr }, Charles Bordenave\thanks{Institut de Math\'ematiques de Marseille; CNRS; Aix-Marseille Universit\'e, Marseille, France. E-mail: charles.bordenave@univ-amu.fr},\\ Mireille Capitaine\thanks{Institut de Math\'ematiques de Toulouse; UMR5219; Universit\'e de Toulouse; CNRS;
 UPS, F-31062 Toulouse, FRANCE. 
E-mail: mireille.capitaine@math.univ-toulouse.fr }, Guillaume C\'ebron\thanks{Institut de Math\'ematiques de Toulouse; UMR5219; Universit\'e de Toulouse; CNRS;
 UPS, F-31062 Toulouse, FRANCE. 
E-mail: guillaume.cebron@math.univ-toulouse.fr }}
\maketitle
\begin{abstract}
We consider a square random matrix of size $N$ of the form $P(Y,A)$ where $P$ is a noncommutative polynomial, $A$ is a tuple of deterministic matrices converging in $\ast$-distribution, when $N$ goes to infinity, towards a tuple $a$ in some $\mathcal{C}^*$-probability space and   $Y$  is a tuple of independent matrices with i.i.d. centered entries
with variance $1/N$. We investigate the eigenvalues  of $P(Y,A)$ outside the spectrum of $P(c,a)$ where $c$ is a circular system which is free from $a$. We provide a sufficient condition to guarantee that these eigenvalues  coincide asymptotically with those of  $P(0,A)$.
\end{abstract}
\section{Introduction}
\subsection{Previous results}
Ginibre (1965) introduced the basic non-Hermitian ensemble of random matrix theory. A so-called Ginibre matrix is a  $N\times N$ matrix comprised of independent complex Gaussian entries.  More generally, an {\it i.i.d. random matrix} is a $N\times N$ random matrix $X_N=(X_{ij})_{1\leq i,j\leq N}$ whose entries are independent identically distributed complex entries with mean 0 and variance 1.\\
For any  $N\times N$ matrix $B$, denote by 
$\lambda_1(B), \ldots, \lambda_N(B)$
\noindent the  eigenvalues of $B$ and by $\mu_{B}$  the empirical spectral measure of $B$: $$\mu_{B} := \frac{1}{N} \sum_{i=1}^N \delta_{\lambda _{i}(B)}.$$  
 The following theorem is the culmination of the work of many authors \cite{Bai, BS10, G, GT, Me, PZ, TV0, TV2}.
\begin{theorem}
Let $X_N$ be an i.i.d. random matrix. Then the empirical spectral measure of $\frac{X_N}{\sqrt{N}}$ converges almost surely to the circular measure $\mu_c$ where $d\mu_c=\frac{1}{\pi} \1_{\vert z \vert \leq 1} dz.$

\end{theorem}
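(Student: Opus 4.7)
The plan is to follow Girko's Hermitization strategy, which reduces the study of eigenvalues of the non-Hermitian matrix $X_N/\sqrt{N}$ to that of singular values of the shifted matrices $X_N/\sqrt{N} - zI$ for each $z\in\C$. The key identity is the logarithmic potential formula
\[
U_N(z) := \frac{1}{N}\sum_{i=1}^N \log\bigl|\lambda_i(X_N/\sqrt N) - z\bigr| = \int_0^\infty \log t\, d\nu_{N,z}(t),
\]
where $\nu_{N,z}$ denotes the empirical distribution of the singular values of $X_N/\sqrt N - zI$. Since $\mu_{X_N/\sqrt N} = \frac{1}{2\pi}\Delta U_N$ in the sense of distributions, almost sure convergence of $U_N(z)$ to a deterministic limit $U(z)$ for Lebesgue-a.e.\ $z$, combined with suitable uniform integrability, will yield the almost sure vague convergence of $\mu_{X_N/\sqrt N}$ to $\frac{1}{2\pi}\Delta U$.

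First I would establish, for each fixed $z$, the almost sure weak convergence of $\nu_{N,z}$ to an explicit deterministic measure $\nu_z$. This is a Hermitian random matrix problem: the Hermitization
\[
H_z = \begin{pmatrix} 0 & X_N/\sqrt N - zI \\ (X_N/\sqrt N - zI)^* & 0 \end{pmatrix}
\]
has a symmetric spectrum equal to $\pm$ the singular values, and its limiting spectral distribution is accessible via the Stieltjes transform (or the moment method combined with a truncation/centering argument to reduce to bounded entries with the fourth moment under control). The limiting measure $\nu_z$ is the one whose logarithmic integral $\int \log t\, d\nu_z(t)$ equals $\max(\log|z|,0) - \tfrac12(1-|z|^2)\mathbf{1}_{|z|\leq 1}$, whose distributional Laplacian is precisely $\tfrac{1}{\pi}\mathbf{1}_{|z|\leq 1}$.

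The central obstacle, and historically the last piece of the proof to be completed, is the control of the \emph{smallest singular value} $s_{\min}(X_N/\sqrt N - zI)$. The function $t\mapsto \log t$ is unbounded at $0$, so weak convergence of $\nu_{N,z}$ is not enough to pass to the limit in $U_N(z)$: one must exclude a non-negligible accumulation of singular values near zero. Quantitative lower tail bounds of the form $\P(s_{\min}(X_N/\sqrt N - zI) \leq N^{-B}) \leq N^{-c}$ for suitable $B,c>0$ — established under minimal moment assumptions by Tao--Vu and G\"otze--Tikhomirov using Littlewood--Offord-type anti-concentration and geometric arguments on compressible/incompressible vectors — provide exactly the integrability one needs. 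Together with a truncation at a polynomially small scale, they allow the passage to the limit in $\int \log t \,d\nu_{N,z}(t)$.

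Once $U_N(z)\to U(z)$ almost surely for a.e.\ $z$, a standard Borel--Cantelli and Fubini argument gives almost sure convergence of $U_N$ to $U$ in $L^1_{\mathrm{loc}}(\C)$, hence convergence of their distributional Laplacians. Computing $\frac{1}{2\pi}\Delta U$ from the explicit $\nu_z$ recovers the uniform density on the unit disk, which is the claim. The main difficulty is unambiguously the small singular value estimate: the convergence of $\nu_{N,z}$ and the recovery of $\mu_c$ from the log-potential are, by comparison, routine once truncation and centering have been performed.
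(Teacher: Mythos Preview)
Your outline is a faithful sketch of Girko's Hermitization strategy as completed by Bai, G\"otze--Tikhomirov, Pan--Zhou, and Tao--Vu, and the emphasis you place on the smallest singular value bound is exactly right. However, there is nothing in the paper to compare it against: the paper does not prove this theorem. It is stated in the introduction as a background result, attributed to ``the culmination of the work of many authors'' with citations to \cite{Bai, BS10, G, GT, Me, PZ, TV0, TV2}, and no argument is given. So while your proposal is a correct high-level account of how the circular law is established in the literature, the paper itself simply quotes the result and moves on.
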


\noindent One can prove that when the fourth moment is finite, there are no significant outliers to the circular law.
\begin{theorem} $($see Theorem 1.4 in $\cite{Tout})$
Let $X_N$ be an i.i.d. random matrix whose entries have finite fourth moment: $\E(\vert X_{11}\vert^4)<+\infty.$ Then the spectral radius $\rho(\frac{X_N}{\sqrt{N}})=\sup_{1\leq j\leq N}\left| \lambda_j\left(\frac{X_N}{\sqrt{N}}\right)\right|$ converges to 1 almost surely as $N$ goes to infinity.
\end{theorem}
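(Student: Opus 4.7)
Write $B_N := X_N/\sqrt{N}$. The lower bound $\liminf_N \rho(B_N) \geq 1$ almost surely is immediate from Theorem 1.1: since $\mu_{B_N}\to \mu_c$ weakly almost surely and $\mu_c(\{1-\e \leq |z| \leq 1\}) > 0$ for every $\e \in (0,1)$, the matrix $B_N$ almost surely has eigenvalues of modulus $\geq 1-\e$ for all large $N$. The remaining task is the matching upper bound $\limsup_N \rho(B_N) \leq 1$ almost surely. Fixing $\e > 0$, I plan to show that $\sum_N \P(\rho(B_N) \geq 1+\e) < \infty$, so that Borel--Cantelli concludes.

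The starting point is the deterministic inequality
\[
\rho(B_N)^{2k} \leq \|B_N^k\|^2 \leq \|B_N^k\|_{\mathrm{HS}}^2 = \trace\bigl(B_N^k (B_N^*)^k\bigr) = \frac{1}{N^k}\trace\bigl(X_N^k (X_N^*)^k\bigr),
\]
valid for each integer $k \geq 1$. If one can establish a moment bound $\E\,\trace(X_N^k (X_N^*)^k) \leq N^{k+1} g(k)$, with $g$ of at most polynomial growth and valid for $k = k(N) \to \infty$ faster than $\log N$, then Markov's inequality gives $\P(\rho(B_N) \geq 1+\e) \leq (1+\e)^{-2k(N)} N\, g(k(N))$, which is summable in $N$ because $(1+\e)^{-2k(N)}$ decays faster than any negative power of $N$.

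The moment bound is carried out in two standard steps. \emph{Truncation}: the fourth-moment assumption together with Borel--Cantelli implies that replacing $X_N$ by $\widetilde X_N$, whose entries are $X_{ij}\ind_{\{|X_{ij}|\leq N^{1/4-\delta}\}}$ recentered and rescaled to preserve mean zero and unit variance, does not change $\rho(B_N)$ for $N$ large enough, almost surely. \emph{Moment computation}: expand $\trace(\widetilde X_N^k(\widetilde X_N^*)^k)$ as a sum over closed index paths of length $2k$ whose first $k$ edges carry $X$-factors and last $k$ carry $X^*$-factors; independence and centering force the surviving configurations to be those in which every $X$-factor is matched with an $X^*$-factor carrying the same index pair, possibly grouped in clusters of size $\geq 3$ whose contribution is controlled by the truncation threshold and the fourth moment. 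A combinatorial enumeration, in the spirit of Bai--Yin, bounds the number of admissible configurations by $N^{k+1}$ times a polynomial in $k$; the leading contribution comes from the essentially unique non-crossing ``rainbow'' pairing between the two blocks, which is consistent with the free-probability identity $\tau(c^k(c^*)^k) = 1$ for the standard circular element $c$. The main obstacle is precisely this combinatorial bookkeeping and the control of the non-Gaussian cluster contributions: the assumption of only a finite fourth moment simultaneously dictates the truncation threshold $N^{1/4-\delta}$ and caps the admissible range of $k(N)$, so one must verify that a choice $k(N) \gg \log N$ still lies in that range while keeping $g(k)$ polynomial.
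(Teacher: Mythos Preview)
The paper does not prove this statement: it is quoted verbatim as a known result from \cite{Tout} (Theorem 1.4 there), with no argument given. So there is no ``paper's own proof'' to compare your proposal against; the theorem serves only as background for the introduction.

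Your plan is the classical moment-method route (Geman, Bai--Yin): lower bound from the circular law, upper bound from $\rho(B_N)^{2k}\le\trace(B_N^k(B_N^*)^k)$ with $k=k(N)\to\infty$, plus truncation at level $N^{1/4-\delta}$ to exploit the fourth-moment hypothesis. The ingredients you name are the right ones, including the free-probabilistic heuristic $\tau(c^k(c^*)^k)=1$ identifying the dominant non-crossing ``rainbow'' pairing. The genuinely delicate part you flag---making the combinatorial enumeration of paths with clusters of size $\ge 3$ work uniformly for $k(N)$ growing fast enough (e.g.\ $k(N)\sim c\log N$ or a small power of $N$) while only assuming a fourth moment---is exactly where the work lies; this is handled in detail in \cite{BS10} and in Geman's original argument, and your sketch is consistent with those. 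Since the paper gives no proof, there is nothing further to compare.
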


%%%%%%%%%%%%%%%%%%%%%%%%%%%%%%%%%%%%%%%%%%%%
An addititive  low rank perturbation $A_N$ can create outliers outside the unit disk. Actually, when $A_N$ 
has bounded rank and bounded operator norm and the entries of the i.i.d. matrix have finite fourth 
moment, Tao proved that outliers outside the unit disk are stable in the sense that outliers of $M_N$ and $A_N$ coincide 
asymptotically.

\begin{theorem} (\cite{Tout})
Let $X_N$ be an i.i.d. random matrix whose entries have finite fourth moment.
 Let  $A_N$ be a deterministic matrix { with
rank $O(1)$} and operator norm $O(1)$. Let $\epsilon > 0$, and suppose that for all sufficiently large $N$, there are :
\begin{itemize}
\item { no
eigenvalues of $A_N$ in  $\{z \in \mathbb{C} : 1+\epsilon  < \vert z \vert  < 1+3 \epsilon \}$},\item {$ j =O(1)$ eigenvalues $\lambda_1(A_N), \ldots , \lambda_j(A_N)$
 in $\{z \in \mathbb{C} : \vert z \vert  \geq 1+3 \epsilon \}$. }
\end{itemize} Then, a.s , for sufficiently large $N$, there are
{ precisely j eigenvalues of $\frac{X_N}{\sqrt{N}} +A_N$
 in   $\{z \in \mathbb{C} : \vert z \vert  \geq 1+2 \epsilon \}$} 
and after labeling these eigenvalues properly, as $N$ goes to infinity,  for each $1 \leq i \leq j$, {$$\lambda_i( \frac{X_N}{\sqrt{N}} +A_N)=
\lambda_i(A_N)+o(1). $$} 
\end{theorem}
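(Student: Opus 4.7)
\emph{Overall strategy and reduction.} My plan is to reduce the problem on $\{|z|>1+\epsilon\}$ to a zero-finding question for a $k\times k$ determinant using the low-rank structure of $A_N$, establish an isotropic resolvent estimate for $X_N/\sqrt{N}$, and conclude via Rouch\'e and Hurwitz. Since $\rk A_N=k=O(1)$, I factor $A_N=U_NV_N^*$ with $U_N,V_N\in M_{N,k}(\C)$ of bounded operator norm (via the SVD). By Theorem 2, almost surely for $N$ large the spectrum of $X_N/\sqrt{N}$ is contained in $\{|z|<1+\epsilon/2\}$, so $R_N(z):=(zI_N-X_N/\sqrt{N})^{-1}$ is well defined on $\{|z|>1+\epsilon\}$. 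The Schur complement (Weinstein--Aronszajn) identity gives
$$\det\bigl(zI_N-X_N/\sqrt{N}-A_N\bigr)=\det\bigl(zI_N-X_N/\sqrt{N}\bigr)\cdot\det\bigl(I_k-V_N^*R_N(z)U_N\bigr),$$
so the outlier eigenvalues in $\{|z|>1+\epsilon\}$ are exactly the zeros of the random analytic $k\times k$ determinant $f_N(z):=\det\bigl(I_k-V_N^*R_N(z)U_N\bigr)$.

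\emph{Isotropic resolvent estimate.} The main step is to prove that, almost surely and uniformly on compacts of $\{|z|>1+\epsilon\}$, $V_N^*R_N(z)U_N-z^{-1}V_N^*U_N\to 0$. Since $|z|>\rho(X_N/\sqrt{N})$ eventually by Theorem 2, the Neumann series $R_N(z)=\sum_{n\geq 0}z^{-n-1}(X_N/\sqrt{N})^n$ converges. For each fixed $n\geq 1$, the $k\times k$ matrix $V_N^*(X_N/\sqrt{N})^nU_N$ has entries that are polynomials of bounded degree in centered i.i.d.\ variables with total variance $O(1/N)$, and hence tends to $0$ almost surely by a moment / Borel--Cantelli argument (using the fourth-moment hypothesis and the standard truncation of entries as in \cite{Tout}). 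The same truncation, which ensures $\|X_N/\sqrt{N}\|=O(1)$ without affecting the limiting spectrum, provides a uniform geometric tail bound on the Neumann series. Consequently $f_N-g_N\to 0$ locally uniformly on $\{|z|>1+\epsilon\}$, where
$$g_N(z):=\det\bigl(I_k-z^{-1}V_N^*U_N\bigr)=z^{-k}\det\bigl(zI_k-V_N^*U_N\bigr).$$

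\emph{Counting and matching via Rouch\'e.} The zeros of $g_N$ in $\C\setminus\{0\}$ are precisely the nonzero eigenvalues of $V_N^*U_N$, which coincide with the nonzero eigenvalues of $A_N=U_NV_N^*$. By hypothesis $g_N$ has no zero in $\{1+\epsilon<|z|<1+3\epsilon\}$ and exactly $j$ zeros (with multiplicity) in $\{|z|\geq 1+3\epsilon\}$. Applying Rouch\'e's theorem in the annulus $\{1+2\epsilon\leq|z|\leq R\}$ for $R$ sufficiently large, the uniform convergence $f_N-g_N\to 0$ yields that $f_N$ also has exactly $j$ zeros in $\{|z|\geq 1+2\epsilon\}$ for $N$ large. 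A local Hurwitz argument on shrinking disks around each $\lambda_i(A_N)$ then produces the asymptotic matching $\lambda_i(X_N/\sqrt{N}+A_N)=\lambda_i(A_N)+o(1)$.

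\emph{Main obstacle.} The crux is the isotropic estimate. For a non-normal matrix, the resolvent norm $\|R_N(z)\|$ can be enormous even for $z$ far from the spectrum, so a crude operator-norm bound on $R_N(z)$ is useless; one must analyze the bilinear form $V_N^*R_N(z)U_N$ directly, exploiting the low rank of $U_N,V_N$, concentration of low-degree polynomials in the i.i.d.\ entries, and the (nontrivial) spectral-radius information of Theorem 2, which itself rests on the fourth-moment hypothesis.
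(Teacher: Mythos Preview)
The paper does not give its own proof of this statement; it is quoted from \cite{Tout} in Section 1.1 as a previously known result. Your outline follows Tao's original argument (low-rank factorisation, Sylvester's identity, Neumann series, Rouch\'e/Hurwitz), and the strategy is sound, but there is a genuine gap in your tail control. You claim that truncation ``which ensures $\|X_N/\sqrt{N}\|=O(1)$ \dots\ provides a uniform geometric tail bound on the Neumann series.'' It does not: under the fourth-moment hypothesis the largest singular value of $X_N/\sqrt{N}$ converges a.s.\ to $2$, not $1$, so the crude estimate $\bigl\|V_N^*(X_N/\sqrt{N})^nU_N\bigr\|\le C\cdot 2^n$ is not summable against $|z|^{-n-1}$ in the range $1+\epsilon<|z|<2$. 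You yourself flag in your last paragraph that an operator-norm bound on $R_N(z)$ is useless for non-normal matrices; the same caveat applies to the powers $(X_N/\sqrt{N})^n$. In \cite{Tout} this is closed by a direct second-moment bound on the bilinear form: a path-counting computation gives $\mathbb E\bigl|u^*(X_N/\sqrt{N})^n v\bigr|^2\le 1$ uniformly in $n$ and $N$ (after truncation), and the geometric weight $|z|^{-n}$ then controls the whole series.

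For comparison, the paper's general machinery (Theorem \ref{outlier} and its proof in Section \ref{sp}), specialised to $P(Y,A)=Y+A$ with $(A_N)'=0$, recovers this statement via Corollary \ref{cor:main}, but handles the tail by a different device. Proposition \ref{borne} writes $(R'_N(z)Y_N)^k=(z^{-1}X_N/\sqrt{N})^k$ as a Dunford--Riesz contour integral over $\{|w|=1-\epsilon\}$, reducing matters to a bound on $\|(wzI_N-X_N/\sqrt{N})^{-1}\|$, i.e.\ a uniform lower bound on $s_N(X_N/\sqrt{N}-\zeta I_N)$ for $|\zeta|$ slightly above $1$. That smallest-singular-value bound (Proposition \ref{nonunif}, Lemma \ref{svw}) is a strictly stronger input than the spectral-radius statement of Theorem 2 that you invoke, and is supplied in the paper by the operator-valued free probability arguments of Sections \ref{prelim} and \ref{serbanlinearisation}. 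Either Tao's moment estimate or the paper's singular-value bound closes the gap in your sketch; the operator-norm argument you wrote does not.
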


Two different ways of generalization of this result were subsequently considered.

%%%%%%%%%%%%%%%%%%%%%%%%%%%%%%%%%%%%%%%%%%%%%%%

%%%%%%%%%%%%%%%%%%%%%%%%%%%%%%%%%%%%%%%%%%%%%%%%%%%%%
Firstly, \cite{BC} investigated the same problem but dealing with full rank additive  perturbations. Main terminology related to free probability theory which is  used in the following is defined in Section \ref{freeproba} below.
Consider the deformed model:
\begin{equation}\label{modeleadditif} 
S_N=  A_N + \frac { X_N}{\sqrt N},
\end{equation}
where $A_N$ is a $N\times N$ deterministic matrix with operator norm $O(1)$ and such that $A_N\in
({\cal M}_N(\mathbb C),\tr_N)$ converges in $\ast$-moments to some noncommutative random variable $a$ 
in some ${\cal C}^*$-probability space $(\mathcal{A}, \phi)$. According to Dozier and Silverstein 
\cite{DozierSilver}, for any 
$z\in\mathbb C$, almost surely the empirical spectral measure  of 
${(S_N-zI_N){(S_N-zI_N)}^*}$ converges weakly towards a nonrandom distribution $\mu_z$ which is the 
distribution of $(c+a-z)(c+a-z)^*$ where $c$ is a circular operator which is free from $a$ in 
$(\mathcal{A},\phi)$.
\begin{remark}\label{supportloi} Note that for any operator $K $ in some ${\cal C}^*$-probability space $({\cal B}, \tau)$, $K$ is invertible if and only if $ KK^*$ and $K^*K$ are invertible. If $\tau$ is tracial,  the distribution $\mu_{KK^*} $ of $KK^*$ coincides with the distribution of $K^*K$.
Therefore, if  $\tau$ is faithful and tracial,   we have that  $0\notin \text{supp}(\mu_{KK^*}) $ if and only if $K$ is invertible.
\end{remark}
\noindent Therefore , since we can assume that $\phi$ is faithful and tracial, $\text{spect}
(c+a)=\{z\in\mathbb C\colon0\in\text{supp}(\mu_z)\},$ where  $\text{spect}$ denotes the spectrum. Actually, we will present some results of 
\cite{BC} only in terms of the spectrum of $c+a$ so that we do not need the assumption (A3) in \cite{BC} 
on the limiting empirical spectral measure of $S_N$. The authors in \cite{BC} gave a sufficient condition 
to guarantee that outliers of the deformed model \eqref{modeleadditif} outside the spectrum of $c+a$ 
are stable. For this purpose, they introduced the notion of well-conditioned matrix  which is related to 
the phenomenon of lack of outlier and of well-conditioned decomposition of $A_N$ which lead to the 
statement of a sufficient condition  for the stability of the outliers. We will denote by $s_1(B) \ge\cdots 
\geq s_N(B)$ the singular values of any $N \times N$ matrix $B$. For any set $K\subset \C$ and any $\epsilon>0$, 
$B(K,\epsilon)$ stands for the set $\{z \in\mathbb{C}\colon d(z,K)\leq \epsilon\}$.
%%%%%%%%%%%%%%%%%%%%%%%%%%%%%%%%%%%%%%%%%%%%%%%%%%%%%%%%%%%%%
%%%%%%%%%%%%%%%%%%%%%%%%%%%%%%%%%%%%%%%%%%%%%%%%%%%%%%%%%%%%%%%

%%%%%%%%%%%%%%%%%%%%%%%%%%%%%%%%%%%%%%%%%

\begin{definition}
Let $\Gamma \subset \mathbb{C} \setminus \text{spect}( c+a)$ be a compact set.  {$A_N$ is well-conditioned} in $\Gamma$ if  for any  $z \in \Gamma$,  there exists $\eta_z >0$ such that for all $N$ large enough, {$s_N(A_N - z I_N)> \eta_z$}. 
\end{definition}
\begin{theorem} (\cite{BC}) Assume that $A_N$ is well-conditioned in $\Gamma$,
 {Then,   a.s. for all $N$ large enough, $S_N$ has no eigenvalue in $\Gamma$.} 
\end{theorem}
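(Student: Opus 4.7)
The plan is to show that, almost surely, for all $N$ large enough,
\[
\inf_{z \in \Gamma} s_N(S_N - zI_N) > 0,
\]
which is equivalent to the absence of eigenvalues of $S_N$ in $\Gamma$, since $z$ is an eigenvalue of $S_N$ precisely when $S_N - zI_N$ is singular. The map $z \mapsto s_N(S_N - zI_N)$ is $1$-Lipschitz (by Weyl's inequality for singular values applied to the perturbation $(z-z')I_N$, whose operator norm is $|z-z'|$), so combined with compactness of $\Gamma$ this reduces the uniform claim to a pointwise one: for each fixed $z \in \Gamma$, prove $\liminf_N s_N(S_N - zI_N) > 0$ almost surely, and then transfer to a uniform lower bound on $\Gamma$ by passing to a sufficiently fine finite $\varepsilon$-net and intersecting the corresponding (finitely many) almost-sure events.

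Fix $z \in \Gamma$. By Remark \ref{supportloi}, the hypothesis $z \notin \text{spect}(c+a)$ gives $0 \notin \text{supp}(\mu_z)$, so the limiting squared-singular-value distribution of $S_N - zI_N$ has a gap at the origin. To upgrade this bulk statement to a bound on the smallest singular value, I would pass to the Hermitian dilation
\[
H_N(z) = \begin{pmatrix} 0 & S_N - zI_N \\ (S_N - zI_N)^* & 0 \end{pmatrix},
\]
whose eigenvalues are the symmetrized singular values of $S_N - zI_N$, and apply a strong convergence result, in the Haagerup--Thorbj{\o}rnsen sense, for this self-adjoint polynomial in the Hermitian dilations of $X_N/\sqrt{N}$ and of $A_N$. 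Strong convergence implies Hausdorff convergence of the spectrum of $H_N(z)$ to that of its free-probabilistic limit, which excludes a neighbourhood of $0$; this forces $s_N(S_N - zI_N)$ to be bounded below. The well-conditioning hypothesis on $A_N$ enters precisely here, guaranteeing that the deterministic shift $A_N - zI_N$ does not on its own inject an eigenvalue of $H_N(z)$ near $0$ that the random perturbation could then fail to push away.

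The main obstacle is the strong-convergence / no-outlier step for $H_N(z)$ with a general well-conditioned deterministic perturbation. The bulk convergence coming from Dozier--Silverstein is insufficient by itself, since it is compatible with isolated small eigenvalues of $H_N(z)$, and the deterministic well-conditioning is insufficient on its own, since the random perturbation might in principle create small singular values. The two must be coupled, typically through a linearization of the polynomial expression and a quantitative resolvent estimate showing that $\|(H_N(z) - wI)^{-1}\|$ stays bounded for $w$ in the limiting gap. Once such a pointwise no-outlier statement is established, the Lipschitz / net reduction of the first paragraph immediately yields the uniform conclusion on $\Gamma$.
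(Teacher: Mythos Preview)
This theorem is quoted from \cite{BC} and not re-proved in the present paper; the paper's own contribution is the generalization, Theorem~\ref{inclusion}, whose proof (Section~\ref{pp}) specializes to the statement in question. So the relevant comparison is with the argument for Proposition~\ref{nonunif}.

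Your outline is structurally sound and matches the paper on the easy reductions: the $1$-Lipschitz observation and the compactness/net argument are exactly what the paper uses at the end of Section~\ref{pp}, and the passage to the Hermitian dilation is the same as in the proof of Proposition~\ref{pasde}. Where your proposal stalls is precisely where the content lies, and you are candid about this (``the main obstacle is the strong-convergence / no-outlier step\dots''). Invoking Haagerup--Thorbj{\o}rnsen-type strong convergence directly cannot work here: strong convergence of $(A_N)$ to $a$ is not assumed, and without it the limiting object $\begin{pmatrix}0 & c+a-z\\(c+a-z)^*&0\end{pmatrix}$ does not control the finite-$N$ spectrum near zero. The well-conditioning hypothesis is a {\em local} substitute for strong convergence, and the whole difficulty is to make that substitution precise.

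The mechanism the paper uses, and which your sketch does not contain, is to interpose the $N$-dependent {\em deterministic equivalent}: one replaces $A_N$ by an abstract $a_N$ with the same $\ast$-distribution but free from $c$, and sets $\mu_{N,z}$ to be the law of $|c+a_N-z|^2$. The argument then splits cleanly into two pieces. First, a no-outlier result relative to $\mu_{N,z}$ (Proposition~\ref{pasde}, reducing via hermitization and Lemma~\ref{transfertliberte} to the Wigner case of \cite{BC}): if $[0,\epsilon]$ avoids $\mathrm{supp}(\mu_{N,z})$ for all large $N$, then a.s.\ $s_N(S_N-zI_N)$ is eventually bounded below. Second, a transfer step (Proposition~\ref{transfert}/Corollary~\ref{decolle}) showing that $0\notin\mathrm{supp}(\mu_z)$ together with the well-conditioning $s_N(A_N-zI_N)>\eta_z$ forces $0\notin\mathrm{supp}(\mu_{N,z})$ for large $N$; this is where the operator-valued subordination machinery (Lemma~\ref{Serban}, Proposition~\ref{2}) enters. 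Your proposal gestures at ``linearization and a quantitative resolvent estimate'' but does not identify this two-level structure, and in the additive case the linearization is trivial, so that hint does not point in the right direction.
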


\begin{corollary} (\cite{BC})
 {If  for any  $z \in \mathbb{C} \setminus \text{spect}( c+a)$},  there exists $\eta_z >0$ such that for all $N$ large enough, {$s_N(A_N - z I_N)> \eta_z$}, 
 {then,  for any $\varepsilon>0$,  a.s. for all $N$ large enough, all eigenvalues of $S_N$ are in $B(\text{spect}( c+a),\varepsilon)$}. 
\end{corollary}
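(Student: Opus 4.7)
\medskip

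The plan is to deduce the corollary from the theorem by a compactness argument, the only non-obvious ingredient being an a.s.\ boundedness of the eigenvalues of $S_N$.

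First I would argue that there exists a (deterministic) $R>0$ such that, a.s. for all $N$ large enough, every eigenvalue of $S_N$ lies in the closed disk $D_R := \{z \in \mathbb{C}\colon |z| \leq R\}$. Indeed, the spectral radius of $S_N$ is bounded by the operator norm, and $\|S_N\| \leq \|A_N\| + \|X_N/\sqrt{N}\|$; the first term is $O(1)$ by hypothesis on $A_N$, while the second is a.s.\ bounded under the moment assumptions implicit in the setting of \cite{BC} (for instance, under a finite fourth moment one can combine Theorem 1.2 with standard bounds on the operator norm of i.i.d.\ matrices). Thus one may take $R$ to be any constant strictly larger than $\limsup_N \|A_N\| + \limsup_N \|X_N/\sqrt{N}\|$.

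Next, fix $\varepsilon>0$ and set
\[
\Gamma := D_R \setminus B(\text{spect}(c+a),\varepsilon).
\]
Then $\Gamma$ is a compact subset of $\mathbb{C}$ which is disjoint from $\text{spect}(c+a)$. By the hypothesis of the corollary, for every $z\in\Gamma$ there exists $\eta_z>0$ such that $s_N(A_N-zI_N)>\eta_z$ for all $N$ large enough, so $A_N$ is well-conditioned on $\Gamma$ in the sense of the preceding definition. Applying the theorem to this $\Gamma$ yields that, a.s. for all $N$ large enough, $S_N$ has no eigenvalue in $\Gamma$.

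Combining the two steps, a.s. for all $N$ large enough every eigenvalue of $S_N$ lies in $D_R$ but not in $\Gamma$, hence in $B(\text{spect}(c+a),\varepsilon)$, which is the conclusion. The only genuine work is the a.s.\ boundedness of the eigenvalues of $S_N$ in the first paragraph; once that is established, the rest is just applying the theorem to the compact truncation $\Gamma$. The step I expect to require the most care is precisely this boundedness claim, since it is the only place where one really invokes a moment assumption on the entries of $X_N$ beyond what is needed for the theorem itself.
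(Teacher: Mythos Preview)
Your proposal is correct and follows essentially the same route as the paper: in the analogous statement (Theorem~\ref{inclusion}) the paper invokes Bai--Yin's theorem to obtain an almost-sure bound $\|M_N\|\le C$, then applies the no-eigenvalue result to the compact set $\Gamma=\{z:d(z,\mathrm{spect}(P(c,a)))\ge\varepsilon,\ |z|\le C\}$, exactly as you do with $\Gamma=D_R\setminus B(\mathrm{spect}(c+a),\varepsilon)$. Your identification of the operator-norm bound as the only substantive extra ingredient is also on target.
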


%%%%%%%%%%%%%%%%%%%%%%%%%%%%%%%%%%%%%%%%%%%%%

%%%%%%%%%%%%%%%%%%%%%%%%%%%%%%%%%%%%%%%%%%%%%%%%%%%
 
Let us introduce now the notion of {\it well-conditioned decomposition} of $A_N$ which  allowed \cite{BC}  to exhibit  a sufficient condition for stability of outliers.

%%%%%%%%%%%%%%%%%%%%%%%%%%%%%%%%%%%%%%%%%%%

\begin{definition}
Let $\Gamma \subset \mathbb{C} \backslash \text{spect}( c+a)$ be a compact set.
 $A_N$ admits a 
{well-conditioned decomposition} if :
{$
A_N = A'_N + A''_N
$}
where
\begin{itemize}
\item There exists $M >0$ such that for all $N$, $\|A'_N\| + \|A''_N\| \leq M$.  
\item {For any $z\in\Gamma$, there exists $\eta_z>0$ such that for all $N$ large enough, $s_N(A'_N-zI_N)
>\eta_z$ (i.e $A'_N$ is well-conditioned in $\Gamma$) and $A''_N$ has  a fixed rank $r$.} 
\end{itemize}
\end{definition}

%%%%%%%%%%%%%%%%%%%%%%%%%%%%%

\begin{theorem}(\cite{BC}) \label{thmain}
Let $\Gamma \subset \mathbb{C} \backslash \text{spect}(c+a)$ be a compact set with continuous boundary.
Assume that $A_N$ admits a 
well-conditioned decomposition:
{$
A_N = A'_N + A''_N
$}.
  If for some $\varepsilon >0$ and all $N$ large enough,
\begin{equation}\label{eqratioAA'}
{\min_{z  \in \partial \Gamma} \left| \frac { \det ( A_N - z) }{\det ( A'_N -z ) } \right| \geq \varepsilon},
\end{equation}
then { a.s. for all $N$ large enough, the numbers of eigenvalues of $A_N$ and $S_N$ in $\Gamma$ are equal.} 
\end{theorem}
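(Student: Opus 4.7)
The plan is to reduce the comparison of eigenvalue counts to a rank-$r$ problem and then apply Rouché's theorem. Write $A''_N = U_N V_N^*$ with $U_N, V_N \in \C^{N \times r}$; by the singular value decomposition of $A''_N$ one may arrange $\norm{U_N}, \norm{V_N} \leq \sqrt{M}$. Set $S'_N := A'_N + X_N/\sqrt{N}$. Since $A'_N$ is well-conditioned in $\Gamma$, the preceding theorem guarantees that, almost surely for $N$ large enough, $S'_N$ has no eigenvalue in $\Gamma$ (one can even replace $\Gamma$ by a slight compact thickening still disjoint from $\text{spect}(c+a)$). The matrix determinant lemma then gives, on a neighbourhood of $\Gamma$,
\begin{equation*}
\det(S_N - z) = \det(S'_N - z)\,f_N(z), \qquad f_N(z) := \det\bigl(I_r + V_N^*(S'_N - z)^{-1}U_N\bigr),
\end{equation*}
\begin{equation*}
\det(A_N - z) = \det(A'_N - z)\,g_N(z), \qquad g_N(z) := \det\bigl(I_r + V_N^*(A'_N - z)^{-1}U_N\bigr).
\end{equation*}
Since neither $S'_N$ nor $A'_N$ has an eigenvalue in $\Gamma$, the argument principle identifies the number of eigenvalues of $S_N$ (resp.\ $A_N$) in $\Gamma$ with the number of zeros of $f_N$ (resp.\ $g_N$) in $\Gamma$.

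The hypothesis \eqref{eqratioAA'} now reads simply $\min_{z \in \partial \Gamma}|g_N(z)| \geq \varepsilon$. So by Rouché's theorem it is enough to prove that, almost surely,
\begin{equation*}
\sup_{z \in \partial \Gamma} \norm{V_N^*\bigl[(S'_N - z)^{-1} - (A'_N - z)^{-1}\bigr]U_N} \longrightarrow 0,
\end{equation*}
since this forces $\sup_{z \in \partial \Gamma}|f_N(z) - g_N(z)| \to 0$ and hence $|f_N - g_N| < |g_N|$ on $\partial \Gamma$ for all large $N$.

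The bulk of the work lies in this uniform bilinear resolvent estimate. Pointwise, it is an anisotropic (``isotropic law'') concentration statement for the resolvent of the non-Hermitian model $S'_N - z$ sandwiched between the bounded deterministic matrices $V_N^*$ and $U_N$. The standard route is to Hermitize by passing to the $2N \times 2N$ block matrix
\begin{equation*}
H_N(z) := \begin{pmatrix} 0 & S'_N - z \\ (S'_N - z)^* & 0 \end{pmatrix},
\end{equation*}
whose resolvent at a purely imaginary spectral parameter recovers $(S'_N - z)^{-1}$ in its off-diagonal block as the parameter is sent to $0$, the passage to the limit being legitimate thanks to the uniform lower bound on $s_N(S'_N - z)$ supplied by the previous theorem. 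One then combines (i) this smallest-singular-value control, (ii) a bilinear concentration estimate for polynomials in $X_N/\sqrt{N}$ and deterministic matrices identifying $V_N^*(S'_N - z)^{-1}U_N$ with its operator-valued free probabilistic analogue $V_N^*(A'_N - z)^{-1}U_N + o(1)$, and (iii) holomorphy of $z \mapsto V_N^*(S'_N - z)^{-1}U_N$ together with a Lipschitz/net argument on the compact $\partial \Gamma$ to promote pointwise to uniform convergence. Finally a Borel--Cantelli step converts convergence in probability to almost-sure convergence. This uniform bilinear concentration of the resolvent on $\partial \Gamma$ is the principal technical obstacle; everything else is bookkeeping around Rouché's theorem.
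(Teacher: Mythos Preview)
Your global architecture---Sylvester's identity to reduce to an $r\times r$ determinant, then Rouch\'e on $\partial\Gamma$---is exactly the one used in \cite{BoC} and in the proof of the generalization in Section~\ref{sp}. The point of divergence, and the place where your sketch has a real gap, is the proof of
\[
\sup_{z\in\partial\Gamma}\bigl\|V_N^*\bigl[(S'_N-z)^{-1}-(A'_N-z)^{-1}\bigr]U_N\bigr\|\to0.
\]

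You propose to obtain this by Hermitization plus a ``bilinear concentration estimate'' identifying the limit as $V_N^*(A'_N-z)^{-1}U_N$. But this identification is precisely the hard part, and it does \emph{not} fall out of a standard isotropic local law. An isotropic law for the Hermitized block matrix $H_N(z)$ produces a deterministic equivalent that solves a matrix Dyson equation; showing that its off-diagonal block at $\eta=0$ collapses to $(A'_N-z)^{-1}$ (equivalently, that the self-energy vanishes there) is a statement about the location of $z$ relative to $\mathrm{spect}(c+a)$ and requires its own argument. Calling $(A'_N-z)^{-1}$ the ``operator-valued free probabilistic analogue'' of $(S'_N-z)^{-1}$ is misleading: the free analogue of $S'_N-z$ is $a+c-z$, not $a-z$.

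The route actually taken in \cite{BoC} (and mirrored in Section~\ref{sp}) avoids local laws entirely. One writes the Neumann expansion
\[
V_N^*(S'_N-z)^{-1}U_N-V_N^*(A'_N-z)^{-1}U_N
=\sum_{k\ge1}V_N^*\Bigl[(A'_N-z)^{-1}\tfrac{X_N}{\sqrt N}\Bigr]^k(A'_N-z)^{-1}U_N,
\]
and then: (i) proves the uniform spectral radius bound $\rho\bigl((A'_N-z)^{-1}X_N/\sqrt N\bigr)\le 1-\epsilon_0<1$ on $\Gamma$ (Lemma~\ref{rayonspectral}); (ii) shows each fixed term goes to zero by the elementary bilinear estimate of Proposition~\ref{bordenavecapitaine}; and (iii) sums by dominated convergence (Proposition~\ref{borne} supplies the uniform geometric majorant). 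Step~(i) is the substantive input: it is equivalent to the absence of eigenvalues of $wX_N/\sqrt N+A'_N$ in $\Gamma$ for all $|w|\le\rho>1$, which ultimately rests on the operator-valued free probability characterization of invertibility in Proposition~\ref{conjecture} and Lemma~\ref{rho}. Your proposal does not contain any analogue of this mechanism; without it, neither the convergence of the series nor the correct identification of the limit is established.
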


On the other hand, in \cite{CRW}, the authors investigate the outliers of several types of bounded rank perturbations of  the product of $m$ independent 
random matrices $X_{N,i}$, $i=1,\ldots,m$ with i.i.d entries. More precisely they study the eigenvalues outside the unit disk, of  the three following deformed models where $A_N$ and the $A_{N,j}$'s denote $N\times N$ deterministic matrices with rank $O(1)$ and norm $O(1)$:
\begin{enumerate}\item $\prod_{k=1}^m \left(\frac{X_{N,k}}{\sqrt{N}} +A_{N,k}\right)$;
\item the product, in some fixed order,
of the $m+s$ terms $\frac{X_{N,k}}{\sqrt{N}}, k=1,\ldots,m$, $(I_N+A_{N,j}), j=1,\ldots,s$;
\item  $\prod_{k=1}^m \frac{X_{N,k}}{\sqrt{N}} +A_N$. \end{enumerate}
Set ${\bf Y_N}=\left(\frac{X_{N,k}}{\sqrt{N}}, k=1,\ldots,m\right)$ and denote by 
${\bf A_N}$ the tuple of perturbations, that is ${\bf A_N}=\left(A_{N,k}, k=1,\ldots,m\right)$ in case 1., ${\bf A_N}=\left(A_{N,j}, j=1,\ldots,s\right)$ in case 2. and ${\bf A_N}=A_N$ in case 3..
In all cases 1.,2.,3., the model is some particular polynomial in ${\bf Y_N}$ and ${\bf A_N}$, let us say $P_{i}({\bf Y_N},{\bf A_N})$, $i=1,2,3$. It turns out that, according to  \cite{CRW}, for each $i=1,2,3$  the eigenvalues of  $P_{i}({\bf Y_N},{\bf A_N})$ and $P_{i}(0,{\bf A_N})$ outside the unit disk coincide asymptotically. Note that the unit disk is equal to the spectrum of each $P_i(c,0)$,  $i=1,2,3$ where $c$ is a free $m$-circular system.
%%%%%%%%%%%
\subsection{Assumptions and results}
In this paper we generalize the previous results from \cite{BC} to non-Hermitian  polynomials in several  
independent i.i.d. matrices  and deterministic matrices. Note that our results include in particular the
previous results from \cite{CRW}. Here are the matricial models we deal with. Let $t$ and $u$ be fixed 
nonzero integer numbers independent from $N$.

\begin{itemize}
\item[{\bf (A1)}]  $(A_N^{(1)},\ldots,A_N^{(t)})$ is a $t-$tuple of  $N\times N$   deterministic matrices   such that
\begin{enumerate}
\item
  for any $i=1,\ldots,t$, \begin{equation}\label{normedeA} \sup_N \Vert A_N^{(i)} \Vert< \infty,\end{equation} where $\Vert \cdot \Vert$ denotes the spectral norm,  
\item $ (A_N^{(1)}, \ldots, A_N^{(t)})$ converges in  $\ast$-distribution  towards a $t$-tuple $a=( a^{(1)},\dots, a^{(t)})$ in some ${\cal C}^*$-probability space $\left(  {\cal A},  \phi\right)$ where $\phi$ is faithful and tracial.
  \end{enumerate}

\item[{\bf (X1)}] We consider $u$ independent $N\times N$ random matrices $X_N^{(v)}=[X^{(v) }_{ij}]_{i,j=1}^N$, $v=1,\ldots,u$,  where, for each $v$,
$ [X^{(v)  }_{ij}]_{i\geq1,j\geq 1}$ is an infinite array of  random variables such that
$\{\sqrt{2}\Re(X^{(v)}_{ij})$,  $\sqrt{2} \Im(X^{(v)}_{ij}), i\geq 1,j\geq 1\}$ are independent identically distributed   centred random variables with variance 1 and finite fourth moment.
\end{itemize}

 Let $P$ be a  polynomial  in $t+u$ noncommutative indeterminates and define 
$$M_N=P\left(\frac{X_N^{(1)}}{\sqrt{N}}, \ldots,\frac{ X_N^{(u)}}{\sqrt{N}}, A_N^{(1)}, \ldots, A_N^{(t)}\right).$$
Note that we do not need any assumption on the convergence of the empirical spectral measure of $M_N$.
Let ${c}=(c^{(1)},\dots,c^{(u)})$ be a free noncommutative circular system in $\left({\cal A},\phi\right)$ 
which is free from ${a}=(a^{(1)},\dots,a^{(t)})$. According to the second assertion of Proposition 
\ref{pasde} below, for any $z\in\mathbb C$, almost surely, the empirical spectral measure of $(M_N-zI_N)
(M_N-zI_N)^*$ converges weakly to $\mu_z$ where $\mu_z$ is the distribution of $\left[P(c,a) -z1\right] 
\left[P( c,a) -z1\right]^*.$ Since we can assume that $\phi$ is faithful and tracial, we have by Remark \ref{supportloi} that
\begin{equation}\label{H2equivalent}
\text{spect}(P({c}, {a}))=\{z\in\mathbb C\colon0 \in \text{supp}(\mu_z)\}.
\end{equation}
Define  
$$
M_N^{(0)}=P(0_N, \ldots, 0_N, A_N^{(1)}, \ldots, A_N^{(t)}),
$$ where $0_N$ denotes the $N\times N$ null matrix.
Throughout the whole paper, we will call outlier any eigenvalue of $M_N$ or $M_N^{(0)}$ outside 
$\mathbb{C}\setminus\text{spect}(P({c}, {a}))$. We are now interested by describing the individual 
eigenvalues of $M_N$ outside $B(\text{spect}(P({c},{a})),\epsilon)$ for some $\epsilon>0$. To this end, we 
shall fix a set $\Gamma\subset\mathbb C$. In the lineage of \cite{BC}, our main result gives a sufficient 
condition to guarantee that outliers are stable in the sense that outliers of $M_N$ and $M_N^{(0)}$ 
coincide asymptotically.
\begin{theorem}\label{outlier} 
Assume that hypotheses ${\bf (A1), (X1)}$ hold. Let $\Gamma$ be  a compact subset of $\mathbb C
\setminus\mathrm{spect}( P(c,a))$. Assume moreover that \\

\noindent $ {\bf (A2)}\;$  for $k=1,\ldots,t$, $\;A_N^{(k)}=(A_N^{(k)})^{'}+(A_N^{(k)})^{''}$,\\
 
\noindent where $(A_N^{(k)})^{''}$ has a bounded rank  $r_k(N)=O(1)$ and $\left((A_N^{(1)})^{'},\dots,  (A_N^{(t)})^{'}
\right)$ satisfies\\
\begin{itemize}
\item $({\bf A_2'})$ for any $z$ in $\Gamma$, there exists $\eta_z>0$ such that for all $N$ large 
enough, there is no singular value of 
$$
P( 0_N, \ldots,0_N,  (A_N^{(1)})', \ldots, (A_N^{(t)})')-zI_N
$$ 
in $[0,\eta_z]$.
\item for any $k=1,\dots,t$, 
\begin{equation}\label{normedeAprime}
\sup_N \Vert(A_N^{(k)})^{'}\Vert <+\infty.
\end{equation}
\end{itemize}
If for some $\epsilon>0$, for all large $N$,
\begin{equation}\label{stabilite} 
\min_{z\in\partial\Gamma}\left|\frac{\det(zI_N-P(0_N,\dots,0_N,A_N^{(1)},\dots,A_N^{(t)})}{\det(zI_N-P(0_N, 
\dots,0_N,(A_N^{(1)})^{'},\dots,(A_N^{(t)})^{'})}\right|\geq\epsilon
\end{equation}
then almost surely for all large $N$, the numbers of eigenvalues of $M_N^{(0)}=P(0_N,\ldots,0_N,A_N^{(1)}, 
\ldots,A_N^{(t)})$ and $M_N=P\left(\frac{X_N^{(1)}}{\sqrt{N}},\dots,\frac{X_N^{(u)}}{\sqrt{N}},A_N^{(1)},\dots, 
A_N^{(t)}\right)$ in $\Gamma$ are equal.
\end{theorem}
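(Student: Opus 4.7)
The plan is to follow the determinantal Rouch\'e strategy of \cite{BC}, transplanted to the polynomial setting via a noncommutative linearization of $P$. There exist a fixed $\rho\geq 1$, matrices $b_0,b_1,\ldots,b_u,\gamma_1,\ldots,\gamma_t\in M_\rho(\C)$ and a block structure such that the Schur complement of the affine pencil
$$
\mathcal L(z,Y,A)\;:=\;b_0\otimes I_N - e_{11}\otimes zI_N + \sum_{v=1}^u b_v\otimes Y^{(v)} + \sum_{k=1}^t \gamma_k\otimes A^{(k)}
$$
equals $P(Y,A)-zI_N$; in particular $\det(zI_N-P(Y,A))$ coincides with $\det\mathcal L(z,Y,A)$ up to a nonzero factor independent of $z$. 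Write $\mathcal L_N(z),\mathcal L_N'(z),\mathcal L_N^{(0)}(z),\mathcal L_N^{(0)'}(z)$ for the four pencils obtained by plugging in $(X_N/\sqrt N,A_N)$, $(X_N/\sqrt N,A_N')$, $(0,A_N)$, $(0,A_N')$ respectively.

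The first main step, which I expect to be the main obstacle, is a no-outlier statement for the well-conditioned random model: almost surely, for all $N$ large, $s_{\min}(\mathcal L_N'(z))$ is bounded away from $0$ uniformly in $z\in\Gamma$. Assumption $(\mathbf{A_2'})$ provides exactly this for the deterministic pencil $\mathcal L_N^{(0)'}(z)$; in particular $(M_N^{(0)})'$ has no eigenvalue in $\Gamma$. Transferring the bound to $\mathcal L_N'(z)$ is where the work lies. Since the finite-rank correction $A_N''$ does not affect the limiting $\ast$-distribution, the free limit of $\mathcal L_N'(z)$ equals $\mathcal L(z,c,a)$, which by \eqref{H2equivalent} together with Remark~\ref{supportloi} is invertible for $z\notin\mathrm{spect}(P(c,a))$. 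Combining this with a Hermitization of $\mathcal L_N'(z)$ (analyzing the self-adjoint polynomial $\mathcal L_N'(z)\mathcal L_N'(z)^*$ in the $X_N^{(v)}$, $(X_N^{(v)})^*$ and the $(A_N^{(k)})'$), together with strong-convergence and concentration estimates of Haagerup--Thorbj\o rnsen type in the spirit of those underlying the additive case of \cite{BC}, yields the required uniform-in-$z$ lower bound on $s_{\min}(\mathcal L_N'(z))$ via a standard $\varepsilon$-net and Lipschitz-in-$z$ argument. In particular $M_N'$ has no eigenvalue in $\Gamma$ a.s.\ for $N$ large.

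The second step is the Rouch\'e comparison. Since $(A_N^{(k)})''$ has fixed rank $r_k=O(1)$, the difference $\mathcal L_N(z)-\mathcal L_N'(z)=\sum_k \gamma_k\otimes(A_N^{(k)})''$ factors as $\widetilde U\widetilde V^*$ with $\widetilde U,\widetilde V$ of size $\rho N\times r$, $r=\rho\sum_k r_k=O(1)$; the same factorization works for $\mathcal L_N^{(0)}(z)-\mathcal L_N^{(0)'}(z)$. The matrix determinant lemma gives
$$
\frac{\det\mathcal L_N(z)}{\det\mathcal L_N'(z)}=\det\bigl(I_r+\widetilde V^*\mathcal L_N'(z)^{-1}\widetilde U\bigr),
$$
and similarly for the $(0)$-versions. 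The entries of the $r\times r$ matrix are bilinear forms $\langle u,\mathcal L_N'(z)^{-1}v\rangle$ in deterministic vectors; by the same isotropic-resolvent machinery underlying Step~1, each such bilinear form is $o(1)$-close to its deterministic counterpart $\langle u,\mathcal L_N^{(0)'}(z)^{-1}v\rangle$, uniformly on a neighborhood of $\partial\Gamma$. Hence
$$
\frac{\det\mathcal L_N(z)}{\det\mathcal L_N'(z)}-\frac{\det\mathcal L_N^{(0)}(z)}{\det\mathcal L_N^{(0)'}(z)}\;\xrightarrow[N\to\infty]{\mathrm{a.s.}}\;0
$$
uniformly on $\partial\Gamma$, and combined with \eqref{stabilite} this yields $\min_{z\in\partial\Gamma}|\det\mathcal L_N(z)/\det\mathcal L_N'(z)|\geq\epsilon/2$ a.s.\ for $N$ large.

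By Step~1 the ratio $\det\mathcal L_N(z)/\det\mathcal L_N'(z)$ is holomorphic on a neighborhood of $\Gamma$, and the same holds for its deterministic analog using $(\mathbf{A_2'})$ alone. A standard Rouch\'e argument applied to these two holomorphic functions on $\Gamma$ (using $|f-g|<|g|$ on $\partial\Gamma$, which follows from the uniform convergence above together with the lower bound \eqref{stabilite}) shows they have the same number of zeros in $\Gamma$. Translating back through the linearization and using that $M_N'$ and $(M_N^{(0)})'$ have no eigenvalue in $\Gamma$, the numbers of eigenvalues of $M_N$ and $M_N^{(0)}$ in $\Gamma$ coincide, as claimed.
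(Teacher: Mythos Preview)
Your global architecture (linearization, no-outlier for the well-conditioned part, Sylvester reduction to an $r\times r$ determinant, Rouch\'e) is exactly the paper's. The gap is in Step~2: the assertion that the bilinear forms $\langle u,\mathcal L_N'(z)^{-1}v\rangle$ converge to $\langle u,\mathcal L_N^{(0)'}(z)^{-1}v\rangle$ does \emph{not} follow from ``the isotropic-resolvent machinery underlying Step~1.'' Step~1 only gives a lower bound on $s_{\min}(\mathcal L_N'(z))$; it says nothing about individual entries of the inverse. And an isotropic local law, if you had one, would compare $\mathcal L_N'(z)^{-1}$ to the \emph{free} deterministic equivalent (the solution of a matrix Dyson equation encoding $(c,a_N)$), not to the $X=0$ resolvent $\mathcal L_N^{(0)'}(z)^{-1}$. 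That these two coincide on bilinear forms for $z$ outside $\mathrm{spect}(P(c,a))$ is precisely the nontrivial content. The paper obtains it by writing the iterated resolvent identity
\[
Q_N\mathcal L_N'(z)^{-1}P_N-Q_N\mathcal L_N^{(0)'}(z)^{-1}P_N=\sum_{k\ge1}Q_N\bigl(\mathcal L_N^{(0)'}(z)^{-1}Y_N\bigr)^k\mathcal L_N^{(0)'}(z)^{-1}P_N,
\]
showing each term goes to zero via a quadratic-form lemma from \cite{BoC}, and --- this is the new ingredient --- proving the series is dominated by a geometric one through the spectral radius bound $\rho\bigl(\mathcal L_N^{(0)'}(z)^{-1}Y_N\bigr)\le 1-\epsilon_0<1$ uniformly on $\Gamma$. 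That bound (Lemma~\ref{rayonspectral}) is obtained by showing that $P(w^{-1}X_N/\sqrt N,A_N')-z$ stays invertible for $|w^{-1}|$ slightly larger than $1$, which in turn rests on an operator-valued characterization of the invertibility of $P(c,a)-z$ (Proposition~\ref{conjecture}, via Lemma~\ref{Serban}): $0\notin\mathrm{supp}(\mu_z)$ iff a certain completely positive map $\Delta_1(z)$ has spectral radius $<1$, and this radius varies continuously, giving room to push $|w|$ past $1$.

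Your Step~1 is also thinner than it looks. Haagerup--Thorbj{\o}rnsen-type strong convergence is not available under a bare fourth-moment assumption on the entries (assumption {\bf (X1)}). The paper instead combines a ``no eigenvalue outside the support of the $N$-dependent deterministic equivalent'' statement (Proposition~\ref{pasde}, reducing to Wigner matrices via a $2\times2$ Hermitization and invoking \cite{BC}) with a separate transfer result (Proposition~\ref{transfert}) that passes from invertibility of $P(c,a)-z$ \emph{and} of $P(0,a_N)-z$ to invertibility of $P(c,a_N)-z$ with a uniform bound. This transfer is again proved via the operator-valued subordination machinery of Section~\ref{prelim}; it is where hypothesis $(\mathbf{A_2'})$ actually enters, and it cannot be replaced by a one-line appeal to strong convergence.
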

The next statement is  an easy consequence of Theorem \ref{outlier}.
\begin{corollary}\label{cor:main} Assume that ${\bf (X1)}$ holds and that,  for $k=1,\ldots,t$, $A_N^{(k)}$ are deterministic matrices with
rank $O(1)$ and operator norm $O(1)$. Let $\epsilon >0$, and suppose that for all sufficiently large $N$, there are no
eigenvalues of
 $M_N^{(0)}=P(0,\ldots,0,A_N^{(1)}, 
\ldots,A_N^{(t)})$ in  $\{z \in \mathbb{C}, \epsilon  < d(z, \mathrm{spect}(P(c,0)))  < 3 \epsilon \}$, and there 
 are $j$ eigenvalues $ \lambda_1 (M_N^{(0)}), \ldots, \lambda_j (M_N^{(0)})$
 for some $j = O(1)$ in the region $\{z \in \mathbb{C}, d(z, \mathrm{spect}(P(c,0)))\geq 3 \epsilon \}$. 
 Then, a.s , for all large $N$, there are
{ precisely $j$ eigenvalues of $ M_N=P\left(\frac{X_N^{(1)}}{\sqrt{N}},\dots,\frac{X_N^{(u)}}{\sqrt{N}},A_N^{(1)},\dots, 
A_N^{(t)}\right)$
 in   $\{z \in \mathbb{C}, d(z, \mathrm{spect}(P(c,0))) \geq 2 \epsilon \}$} 
and after labeling these eigenvalues properly, {$$
\max_{j\in J}|\lambda_j(M_N)-\lambda_j (M_N^{(0)})|\to_{N\to+\infty} 0.
$$} 
\end{corollary}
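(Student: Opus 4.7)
The plan is to apply Theorem~\ref{outlier} with the trivial decomposition $(A_N^{(k)})'=0$, $(A_N^{(k)})''=A_N^{(k)}$, which is admissible since the $A_N^{(k)}$ already have rank $O(1)$. Under the corollary's hypotheses the $A_N^{(k)}$ converge in $\ast$-distribution to the zero tuple, so $a=0$ and $\mathrm{spect}(P(c,a))=\mathrm{spect}(P(c,0))$. I would choose $R$ larger than an eventual almost-sure upper bound for $\|M_N\|$ (which exists by evaluating $P$ on the uniformly bounded $\|X_N^{(v)}/\sqrt{N}\|$, given by Bai--Yin under the finite-fourth-moment hypothesis, and on the bounded $\|A_N^{(k)}\|$), and set
\[
\Gamma\ :=\ \{z\in\mathbb{C}\colon 2\epsilon\leq d(z,\mathrm{spect}(P(c,0)))\leq R\},
\]
a compact subset of $\mathbb{C}\setminus\mathrm{spect}(P(c,0))$.

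Set $\alpha:=P(0,\ldots,0)\in\mathbb{C}$, the constant term of $P$, so that $P(0_N,\ldots,0_N,0,\ldots,0)=\alpha I_N$. The crucial structural observation is that $M_N^{(0)}-\alpha I_N$ has rank $O(1)$: every monomial of the polynomial $Q(Y):=P(0,\ldots,0,Y)-\alpha$ contains at least one factor $A_N^{(k)}$ of rank $O(1)$, and $Q$ has only finitely many monomials. Consequently $M_N^{(0)}$ has $N-O(1)$ eigenvalues equal to $\alpha$, so the hypotheses on eigenvalue counts force $d(\alpha,\mathrm{spect}(P(c,0)))\leq\epsilon$: otherwise $\alpha$ would either populate the forbidden annulus or be an outlier of multiplicity $N-O(1)$, contradicting $j=O(1)$. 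Hence for every $z\in\Gamma$ one has $|z-\alpha|\geq\epsilon$, and since all singular values of $(\alpha-z)I_N$ equal $|z-\alpha|$, this verifies $(\mathbf{A_2'})$ with $\eta_z=\epsilon$; the norm condition on the primed part is trivial.

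For (\ref{stabilite}), write $\mu_1,\ldots,\mu_k$, $k=O(1)$, for the eigenvalues of $M_N^{(0)}$ distinct from $\alpha$, so that the ratio reduces to $\prod_{i=1}^k(z-\mu_i)/(z-\alpha)^k$. The denominator is bounded above on the compact set $\Gamma$. Each $\mu_i$ is either near the spectrum ($d(\mu_i,\mathrm{spect})\leq\epsilon$) or an outlier ($d(\mu_i,\mathrm{spect})\geq 3\epsilon$); in both cases the triangle inequality gives $|z-\mu_i|\geq\epsilon$ on the inner boundary $\{d(z,\mathrm{spect})=2\epsilon\}$, and on the outer boundary $\{d(z,\mathrm{spect})=R\}$ the bound follows once $R$ exceeds a uniform bound on $\|M_N^{(0)}\|+\|P(c,0)\|+\epsilon$. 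Thus (\ref{stabilite}) holds with a uniform positive constant, and Theorem~\ref{outlier} produces a.s.\ exactly $j$ eigenvalues of $M_N$ in $\Gamma$; combined with $\|M_N\|\leq R$ this yields precisely $j$ outliers in $\{z\colon d(z,\mathrm{spect}(P(c,0)))\geq 2\epsilon\}$.

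For the convergence claim I would extract a subsequence along which each $\lambda_i(M_N^{(0)})$ converges, group coincident limits into distinct clusters $\nu_1^*,\ldots,\nu_l^*$ of multiplicities $m_1,\ldots,m_l$, and for any small $\delta>0$ re-apply Theorem~\ref{outlier} to each $\Gamma_p=\overline{B}(\nu_p^*,\delta)$. The same arguments verify $(\mathbf{A_2'})$ and (\ref{stabilite}) on each $\Gamma_p$, with the lower bound on $|z-\mu_i|$ on $\partial\Gamma_p$ coming from the separation of the distinct clusters together with the fact that, for $N$ large, the outliers of $M_N^{(0)}$ in cluster $p$ lie in $B(\nu_p^*,\delta/2)$. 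This forces exactly $m_p$ eigenvalues of $M_N$ into each $B(\nu_p^*,\delta)$; relabeling and letting $\delta\downarrow 0$, followed by a standard subsequence argument, gives the claimed uniform convergence. The main obstacle is $(\mathbf{A_2'})$: a priori the trivial decomposition could fail if $\alpha$ lay far from $\mathrm{spect}(P(c,0))$, and the rank estimate on $M_N^{(0)}-\alpha I_N$ is precisely the non-trivial ingredient that rescues it by pinning $\alpha$ within distance $\epsilon$ of that spectrum.
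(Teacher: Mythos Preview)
Your proof is correct and supplies the details the paper omits (it only states that the corollary ``is an easy consequence of Theorem~\ref{outlier}''). The natural decomposition $(A_N^{(k)})'=0$, $(A_N^{(k)})''=A_N^{(k)}$ is exactly what is intended; your key observation---that $M_N^{(0)}-\alpha I_N$ has rank $O(1)$, forcing $d(\alpha,\mathrm{spect}(P(c,0)))\le\epsilon$ under the corollary's hypotheses and thus verifying $(\mathbf{A_2'})$ on $\Gamma$---is the essential point, and your verification of \eqref{stabilite} via the factorization $\prod_i(z-\mu_i)/(z-\alpha)^k$ together with the annulus dichotomy for the $\mu_i$ is clean. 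The localization-by-small-balls argument for the convergence of individual outliers is the standard way to upgrade an eigenvalue count to pointwise convergence and works as you describe.
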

We will first prove Theorem \ref{outlier} in the case $r =0$. 
\begin{theorem}\label{inclusion}
Suppose that assumptions of Theorem \ref{outlier} hold with, for any $k=1,\ldots,t,$ $(A_N^{(k)})'' = 0$, $A_N^{(k)} = (A^{(k)}_N)'$ and $\Gamma 
\subset\mathbb C\backslash\mathrm{spect} (P(c,a))$ a compact set. Then, a.s. for all $N$ large enough, 
$M_N$ has no eigenvalue in $\Gamma$. 

 In particular, if assumptions of Theorem \ref{outlier} hold with, for any $k=1,\ldots,t,$ $(A_N^{(k)})'' = 0$, $A_N^{(k)} = (A^{(k)}_N)'$   and $\Gamma =\mathbb C\backslash\mathrm{spect}(P(c,a))$ then for any $\varepsilon>0$, a.s. for all $N$ large enough, all eigenvalues of $M_N$ are in $B(\mathrm{spect} (P(c,a)) , \varepsilon)$. 
\end{theorem}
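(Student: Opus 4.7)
The plan is to derive a uniform positive lower bound on the smallest singular value $s_N(M_N - zI_N)$ over $z \in \Gamma$, almost surely for all large $N$; this immediately yields invertibility of $M_N - zI_N$ throughout $\Gamma$ and hence the absence of eigenvalues there. I would pass to the Hermitization
$$H_N(z) := \begin{pmatrix} 0 & M_N - zI_N \\ (M_N - zI_N)^* & 0 \end{pmatrix},$$
whose nonzero eigenvalues are $\pm s_i(M_N - zI_N)$, and its free analogue $H(z) \in M_2(\mathcal{A})$ built from $P(c,a) - z$. Since $z \notin \mathrm{spect}(P(c,a))$, $H(z)$ is invertible; compactness of $\Gamma$ together with norm-continuity of $z \mapsto (P(c,a) - z)^{-1}$ yields a uniform $\delta > 0$ with $\mathrm{spect}(H(z)) \cap [-2\delta, 2\delta] = \emptyset$ for every $z \in \Gamma$. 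The problem reduces to showing that almost surely, for all $N$ large, $\mathrm{spect}(H_N(z)) \cap [-\delta, \delta] = \emptyset$ uniformly in $z \in \Gamma$.

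To make this tractable I would apply a Haagerup-style linearization, reducing invertibility of $M_N - zI_N$ to invertibility of an affine pencil
$$L_N(z) = L_0(z) \otimes I_N + \sum_{v=1}^u L_v \otimes \tfrac{X_N^{(v)}}{\sqrt{N}} + \sum_{k=1}^t L_{u+k} \otimes A_N^{(k)},$$
with fixed-size scalar matrices $L_j$ (and $L_0(z)$ polynomial in $z$), paired with a free operator-valued pencil $L(z) \in M_d(\mathbb{C}) \otimes \mathcal{A}$ whose invertibility encodes $z \notin \mathrm{spect}(P(c,a))$. The Hermitization of $L_N(z)$ is then a self-adjoint matrix linear in the independent i.i.d.\ entries and in the deterministic matrices, and I would invoke a strong-convergence / no-outlier eigenvalue result, in the spirit of Haagerup--Thorbj\o rnsen, Male, and Bordenave--Capitaine, adapted to the finite-fourth-moment hypothesis in $({\bf X1})$, to transfer the spectral gap at $0$ from the limiting Hermitized pencil to the Hermitization of $L_N(z)$ for $N$ large.

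Hypothesis $({\bf A_2'})$ is the essential stabilizer of this argument: it guarantees that the noise-free Hermitized pencil $L_0(z)\otimes I_N + \sum_k L_{u+k}\otimes A_N^{(k)}$ has a uniform spectral gap of size $\eta_z$ around $0$ on $\Gamma$, preventing the finite-$N$ degeneracies that outliers of the $A_N^{(k)}$ (not absorbed by $\mathrm{spect}(a^{(k)})$) could produce, and providing the base case for the operator-valued subordination / strong-convergence argument. Uniformity in $z$ then follows from the $1$-Lipschitz bound $|s_N(M_N - zI_N) - s_N(M_N - z'I_N)| \leq |z - z'|$ and a finite $\delta/2$-net on the compact set $\Gamma$. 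The ``in particular'' assertion reduces to the compact case applied to $\{z : d(z, \mathrm{spect}(P(c,a))) \geq \varepsilon,\ |z| \leq R\}$ for $R$ larger than an almost sure upper bound on $\|M_N\|$, itself guaranteed by $({\bf X1})$ via a Bai--Yin-type estimate. The main obstacle is the strong-convergence step under only a fourth-moment assumption rather than Gaussian entries: one must exclude outlier eigenvalues of the Hermitized pencil near $0$ without Gaussian integration-by-parts, typically by combining a quantitative trace-method or Stein-type concentration with operator-valued Stieltjes transform estimates, with $({\bf A_2'})$ keeping the subordination equations well-posed uniformly on $\Gamma$.
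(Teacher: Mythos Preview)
Your high-level strategy---Hermitization, linearization to an affine pencil, a no-outlier result for the Hermitized pencil, and uniformization over $\Gamma$ via the $1$-Lipschitz bound on $z\mapsto s_N(M_N-zI_N)$ together with compactness---matches the paper's approach exactly, including the reduction of the ``in particular'' assertion to the compact case via Bai--Yin.

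However, you are glossing over the central difficulty. The available no-outlier results for Hermitian polynomials in Wigner and deterministic matrices (the paper's Proposition~\ref{pasde}, which reduces to \cite[Theorem 1.1]{BC} via a $2\times 2$ block trick) do \emph{not} compare the spectrum of $H_N(z)$ directly to that of the limiting operator $H(z)$ built from $(c,a)$. They compare it to the spectrum of the \emph{deterministic equivalent} built from $(c,a_N)$, where $a_N$ has the same $*$-distribution as the tuple $(A_N^{(1)},\dots,A_N^{(t)})$ but is free from $c$. Since the $A_N^{(k)}$ converge to $a$ only in $*$-moments, not strongly, there is no a priori reason why $|P(c,a_N)-z|^2$ should be bounded away from zero uniformly in $N$, even knowing both that $|P(c,a)-z|^2\geq\delta_z$ and (from $({\bf A_2'})$) that $|P(0,a_N)-z|^2\geq\eta_z$. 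Establishing precisely this implication---that the two gaps together force a uniform gap for the deterministic equivalent---is the paper's Proposition~\ref{transfert}, and it is the substantive part of the argument. Its proof requires a symmetrization lemma replacing circulars by semicirculars (Lemma~\ref{symm}), an operator-valued characterization of the invertibility of $\mathcal S+\mathcal Y$ via a spectral-radius condition on $\eta\circ G_{\mathcal Y}'(0)$ (Lemma~\ref{Serban}), and then a continuity argument passing that spectral-radius condition from $\mathcal Y$ (built from $a$) to $\mathcal Y_N$ (built from $a_N$). Your phrase ``with $({\bf A_2'})$ keeping the subordination equations well-posed uniformly on $\Gamma$'' gestures in the right direction but does not identify this two-stage structure (matrix model $\to$ deterministic equivalent, then deterministic equivalent $\leftarrow$ limit), which is where essentially all the work in Section~\ref{pp} lies.
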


To prove Theorems \ref{inclusion} and  \ref{outlier}, we make use of a linearization procedure which brings the study of the polynomial back to that of the sum of matrices in a  higher dimensional space. Then, this allows us to follow the approach of \cite{BC}. But for this purpose, we need to establish substantial operator-valued free probability results. \\
In Section \ref{ex}, we present our theoretical results and corresponding simulations for four examples of random polynomial matrix models.
Section \ref{ovfp} provides required  definitions and preliminary results on  operator-valued free probability theory.
Section \ref{Sec:linny} describes the fundamental  linearization trick as introduced in 
\cite[Proposition 3]{A}. In Sections \ref{pp} and \ref{sp}, we establish Theorems  \ref{inclusion} and  \ref{outlier} respectively.

\section{Related results and examples}\label{ex}
Recall that we do not need any assumption on the convergence of the  empirical spectral measure of $M_N$. However, the convergence in $*$-distribution of $\left(\frac{X_N^{(1)}}{\sqrt{N}}, \ldots,\frac{ X_N^{(u)}}{\sqrt{N}}, A_N^{(1)}, \ldots, A_N^{(t)}\right)$ to $(c,a)=(c^{(1)}, \ldots, c^{(u)}, a^{(1)}, \ldots, a^{(t)})$ (see Proposition \ref{pasde}) implies the convergence in $*$-distribution of
$$M_N=P\left(\frac{X_N^{(1)}}{\sqrt{N}}, \ldots,\frac{ X_N^{(u)}}{\sqrt{N}}, A_N^{(1)}, \ldots, A_N^{(t)}\right)$$
to $P(c,a)$. In this situation, a good candidate to be the limit of the empirical spectral distribution of $M_N$ is the \emph{Brown measure} $\mu_{P(c,a)}$ of $P(c,a)$ (see \cite{Brown}). Unfortunately, the convergence of the empirical spectral distribution of $M_N$ to $\mu_{P(c,a)}$ is still an open problem for an arbitrary polynomial.

In the three following examples, we will consider the particular situation where we can decompose
$$
M_N=\alpha\frac{X_N^{(1)}}{\sqrt{N}}+Q\left(\frac{X_N^{(2)}}{\sqrt{N}}, \ldots,\frac{ X_N^{(u)}}{\sqrt{N}}, A_N^{(1)}, \ldots, A_N^{(t)}\right),
$$
with $\alpha>0$, $X_N^{(1)}$ a Ginibre matrix and $Q$ an arbitrary polynomial.
Indeed, in this case,
a beautiful result of \'{S}niady \cite{Snia} ensures that the empirical spectral distribution of $M_N$ converges to $\mu_{P(c,a)}$. Thus, the description of the limiting spectrum of $M_N$ inside $\text{supp}(\mu_{P(c,a)})$ is a question of computing explicitely $\mu_{P(c,a)}$ (a quite hard problem, which can be handled numerically by~\cite{BSS}). On the other hand, Theorem~\ref{outlier} explains the behaviour of the spectrum of $M_N$ outside $\text{spect}(P(c,a))$. Thus, we have a complete description of the limiting spectrum of $M_N$, except potentially in the set $\text{spect}(P(c,a))\setminus \text{supp}(\mu_{P(c,a)})$ which is not necessarily empty (even if it is empty in the majority of the examples known, see \cite{BL}).

For an arbitrary polynomial, we only know that any limit point of the empirical spectral distribution of $M_N$ is a balay\'{e}e of the measure $\mu_{P(c,a)}$ (see \cite[Corollary 2.2]{BL}), which implies that the support of any such limit point is contained in $\text{supp}(\mu_{P(c,a)})$, and in particular is contained in $\text{spect}(P(c,a))$.

\subsection{Example 1}We consider the matrix
\begin{align*}M_N&=P_1\left(\frac{X^{(1)}_N}{\sqrt{N}},\frac{X^{(2)}_N}{\sqrt{N}},\frac{X^{(3)}_N}{\sqrt{N}},A_N\right)\\
&=\frac{3}{2}\frac{X^{(1)}_N}{\sqrt{N}}+\frac{1}{6}\!\left(\frac{X^{(2)}_N}{\sqrt{N}}\right)^2\!\!A_N\!+\frac{1}{6}\frac{X^{(2)}_N}{\sqrt{N}}\frac{X^{(3)}_N}{\sqrt{N}}A_N\frac{X^{(3)}_N}{\sqrt{N}}+A_N^2\frac{X^{(3)}_N}{\sqrt{N}}+A_N+\frac{1}{8}A_N^2,\end{align*}
where $X^{(1)}_N,X^{(2)}_N,X^{(3)}_N$ are i.i.d. Gaussian matrices and$$A_N=\left(\begin{array}{ccccc}
2 &  &  &  &  \\ 
 & 2i &  &  &  \\ 
 & & 0 &  &  \\ 
 &  &  & \ddots &  \\ 
 & &  &  & 0
\end{array} \right).$$\begin{figure}[!h] %on ouvre l'environnement figure
\begin{center}
\includegraphics[width=9cm, trim = 0mm 2cm 0mm 2.5cm, clip]{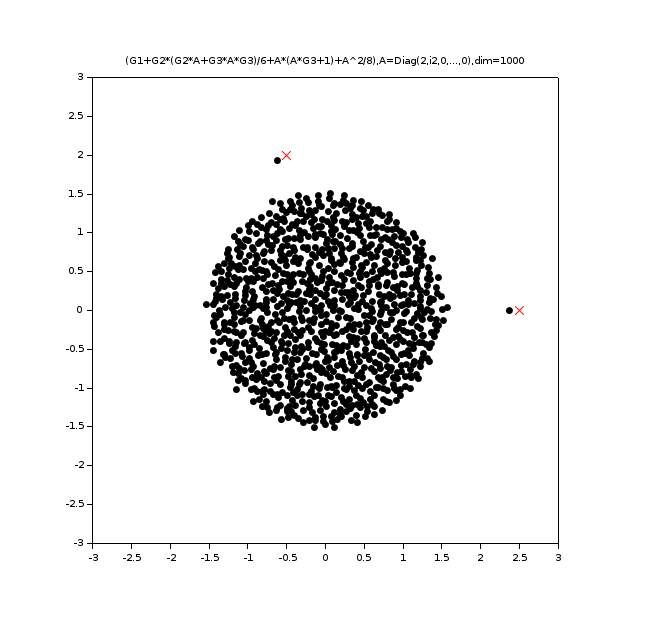}
\end{center}
\caption{In black, the eigenvalues of $P_1\left(\frac{X^{(1)}_N}{\sqrt{N}},\frac{X^{(2)}_N}{\sqrt{N}},\frac{X^{(3)}_N}{\sqrt{N}},A_N\right)$ for $N=1000$, and in red, the outliers $2.5$ and $2i-0.5
$ of
$P_1(0_N,0_N,0_N,A_N)$.}
\label{Example1} 
\end{figure}

The matrix $M_N$ converges in $*$-distribution to $\frac{3}{2}c$, where $c$ is a circular variable, and the empirical spectral measure  of $M_N$ converges to the Brown measure of $c$, which is the uniform law on the centered disk of radius $3/2$ by \cite{BL}. This disk is also  the spectrum of $\frac{3}{2}c$. Our theorem says that, outside this disk, the outliers of $M_N$ are closed to the eigenvalues $2.5$ and $2i-0.5
$ of
$P_1(0_N,0_N,0_N,A_N)=A_N+\frac{1}{8}A_N^2$ (see Figure~\ref{Example1}).

\subsection{Example 2}

We consider the matrix
\begin{align*}&M_N=P_2\left(\frac{X^{(1)}_N}{\sqrt{N}},\frac{X^{(2)}_N}{\sqrt{N}},\frac{X^{(3)}_N}{\sqrt{N}},A_N^{(1)},A_N^{(2)}\right)\\
&=\frac{1}{2}\frac{X^{(1)}_N}{\sqrt{N}}+\frac{1}{6}A_N^{(1)}\frac{X^{(2)}_N}{\sqrt{N}}\left(A_N^{(2)}+A_N^{(1)}+\frac{X^{(3)}_N}{\sqrt{N}}\right)\frac{X^{(2)}_N}{\sqrt{N}}+A_N^{(2)}\frac{X^{(3)}_N}{\sqrt{N}}A_N^{(1)}\\
&\quad+A_N^{(1)}+\frac{1}{2}A_N^{(2)},\end{align*}
where $X^{(1)}_N,X^{(2)}_N,X^{(3)}_N$ are i.i.d. Gaussian matrices,$$A_N^{(1)}=\left(\begin{array}{ccccc}
2 &  &  &  &  \\ 
 & -2.5 & &  &  \\ 
 &  & 0 &  &  \\ 
 &  &  & \ddots &  \\ 
 &  &  &  & 0
\end{array} \right)$$
and $A_N^{(2)}$ is a realization of a G.U.E. matrix.

\begin{figure}[!h] 
\begin{center}
\includegraphics[width=9cm, trim = 0mm 2cm 0mm 2.5cm,clip]{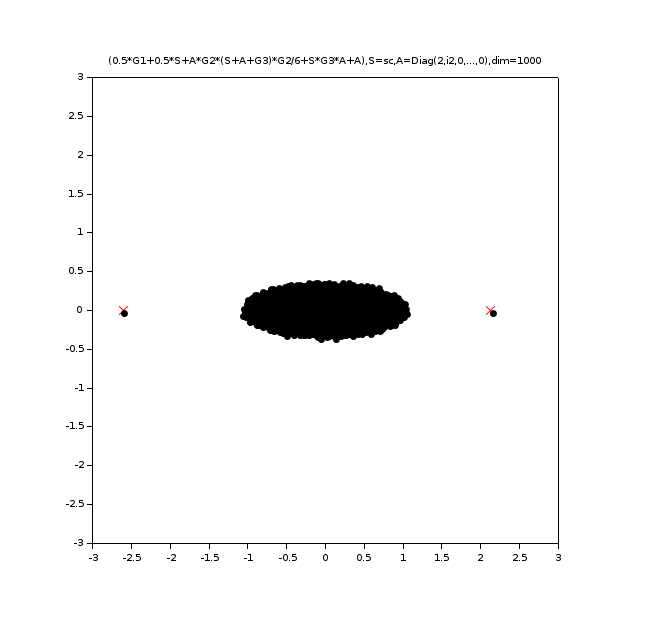}
\end{center}
\caption{In black, the eigenvalues of $P_2\left(\frac{X^{(1)}_N}{\sqrt{N}},\frac{X^{(2)}_N}{\sqrt{N}},\frac{X^{(3)}_N}{\sqrt{N}},A_N^{(1)},A_N^{(2)}\right)$ for $N=1000$, and in red, the limiting outliers $2.125$ and $-2.6$ of
$P_1(0,0,0,A_N^{(1)},A_N^{(2)})$.} 
\label{Example2}
\end{figure}

The matrix $M_N$ converges in $*$-distribution to the elliptic variable $\frac{1}{2}(c+s)$, where $c$ is a circular variable and $s$ a semicircular variable free from $c$. The empirical spectral measure  of $M_N$ converges to the Brown measure of $\frac{1}{2}(c+s)$, which is the uniform law on the interior of the ellipse $\{\frac{3}{2\sqrt{2}}\cos(\theta)+i\frac{1}{2\sqrt{2}}\sin(\theta):0\leq \theta < 2\pi\}$ by \cite{BL}. The interior of this ellipse is also  the spectrum of $\frac{1}{2}(c+s)$. Our theorem says that, outside this ellipse, the outliers of $M_N$ are closed to the outliers of
$P_2(0_N,0_N,0_N,A_N^{(1)},A_N^{(2)})=A_N^{(1)}+\frac{1}{2}A_N^{(2)}$ (see Figure~\ref{Example2}). Moreover, the outliers of $A_N^{(1)}+\frac{1}{2}A_N^{(2)}$ are those of an additive perturbation of a G.U.E. matrix, and converges to $2.125$ and $-2.6$ by \cite{Peche}.

\subsection{Example 3}
We consider the matrix
\begin{align*}M_N&=P_3\left(\frac{X^{(1)}_N}{\sqrt{N}},\frac{X^{(2)}_N}{\sqrt{N}},\frac{X^{(3)}_N}{\sqrt{N}},A_N^{(1)},A_N^{(2)}\right)\\
&=\frac{X^{(1)}_N}{\sqrt{N}}+A_N^{(1)}+A_N^{(2)}+A_N^{(1)}\frac{X^{(2)}_N}{\sqrt{N}}A_N^{(2)}\frac{X^{(2)}_N}{\sqrt{N}}+\frac{X^{(3)}_N}{\sqrt{N}}A_N^{(2)}\frac{X^{(2)}_N}{\sqrt{N}},\end{align*}
where $X^{(1)}_N,X^{(2)}_N,X^{(3)}_N$ are i.i.d. Gaussian matrices,$$A_N^{(1)}=\left(\begin{array}{cccccc}
1 &  &  &  & &  \\ 
 & \ddots &  &  & & \\ 
 &  & 1 &  & &  \\ 
 &  &  & -1&  &  \\ 
 &  &  & & \ddots &  \\ 
 &  &  & &  & -1
\end{array} \right)$$
is a matrix whose empirical spectral distribution converges to $\frac{1}{2}(\delta_1+\delta_{-1})$
and
$$A_N^{(2)}=\left(\begin{array}{ccccc}
1.5 &  &  &  &  \\ 
 & -2+2i &  &  &  \\ 
 &  & 0 &  &  \\ 
 &  &  & \ddots &  \\ 
 &  &  &  & 0
\end{array} \right)$$
\begin{figure}[!h]
\begin{center}
\includegraphics[width=9cm, trim = 0mm 2cm 0mm 2.5cm, clip]{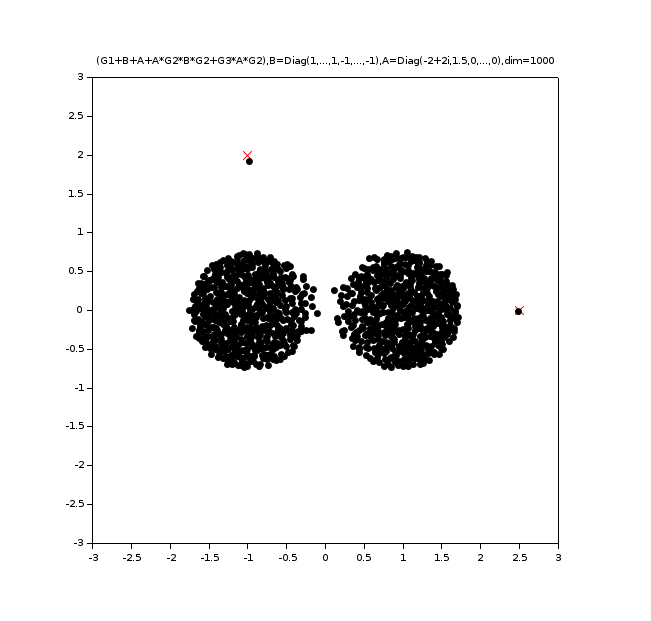}
\end{center}
\caption{In black, the eigenvalues of $P_3\left(\frac{X^{(1)}_N}{\sqrt{N}},\frac{X^{(2)}_N}{\sqrt{N}},\frac{X^{(3)}_N}{\sqrt{N}},A_N^{(1)},A_N^{(2)}\right)$ for $N=1000$, and in red, the outliers $2.5$ and $-1+2i$ of
$P_3(0_N,0_N,0_N,A_N^{(1)},A_N^{(2)})$.}
\label{Example3} 
\end{figure} 

The matrix $M_N$ converges in $*$-distribution to the random variable $c+a$, where $c$ is a circular variable and $a$ is a self-adjoint random variable, free from $c$, and whose distribution is $\frac{1}{2}(\delta_1+\delta_{-1})$. The empirical spectral measure  of $M_N$ converges to the Brown measure of $c+a$, which is absolutely continuous and whose support is the region inside the lemniscate-like curve in the complex plane with the equation $\{z\in \mathbb{C}:|z^2+1|^2=|z|^2+1\}$ by \cite{BL}. The interior of this ellipse is also  the spectrum of $c+a$. Our theorem says that, outside this ellipse, the outliers of $M_N$ are closed to the outliers $2.5$ and $-1+2i$ of
$P_3(0_N,0_N,0_N,A_N^{(1)},A_N^{(2)})=A_N^{(1)}+A_N{(2)}$ (see Figure~\ref{Example3}).

\subsection{Example 4}We consider the matrix
\begin{align*}M_N&=P_4\left(\frac{X^{(1)}_N}{\sqrt{N}},\frac{X^{(2)}_N}{\sqrt{N}},\frac{X^{(3)}_N}{\sqrt{N}},A_N\right)\\
&=\frac{1}{5}\left(\frac{X^{(1)}_N}{\sqrt{N}}+3I_N\right)\left(\frac{X^{(2)}_N}{\sqrt{N}}+A_N+2I_N\right)\left(\frac{X^{(3)}_N}{\sqrt{N}}+2I_N\right)-2I_N,\end{align*}
where $X^{(1)}_N,X^{(2)}_N,X^{(3)}_N$ are i.i.d. Gaussian matrices and$$A_N=\left(\begin{array}{ccccc}
2i &  &  &  &  \\ 
 & -2i &  &  &  \\ 
 &  & 0 &  &  \\ 
 &  &  & \ddots &  \\ 
 &  &  &  & 0
\end{array} \right).$$

\begin{figure}[!h] 
\begin{center}
\includegraphics[width=10cm, trim = 0mm 1cm 0mm 1.8cm, clip]{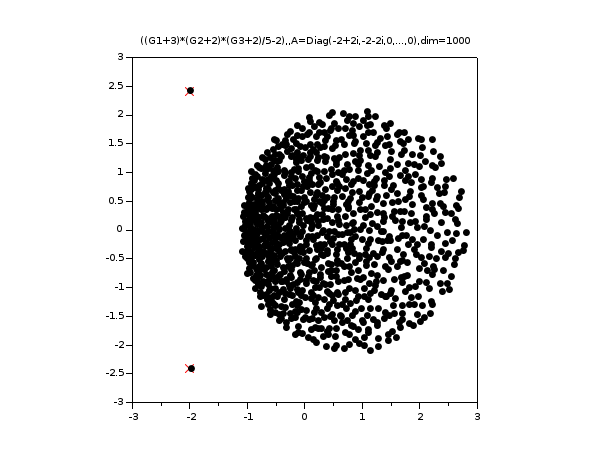}
\end{center}
\caption{In black, the eigenvalues of $P_4\left(\frac{X^{(1)}_N}{\sqrt{N}},\frac{X^{(2)}_N}{\sqrt{N}},\frac{X^{(3)}_N}{\sqrt{N}},A_N\right)$ for $N=1000$, and in red, the outliers $-2+2.4i$ and $-2-2.4i$ of
$P_4(0_N,0_N,0_N,A_N)$.} 
\label{Example4} 
\end{figure} 

The matrix $M_N$ converges in $*$-distribution to the random variable $(c_1+3)(c_2+2)(c_3+2)/5-2$, where $c_1,c_2,c_3$ are free circular variables. It is expected (but not proved) that the empirical spectral measure  of $M_N$ converges to the Brown measure of $\left(c_1+3\right)\left(c_2+2\right)\left(c_3+2\right)/5-2$. The spectrum of  $\left(c_1+3\right)\left(c_2+2\right)\left(c_3+2\right)/5-2$ is included in the set $\left(B(0,1)+3\right)(B(0,1)+2)\left(B(0,1)+2\right)/5-2$. Our theorem says that, outside this set, the outliers of $M_N$ are closed to the outliers $-2+2.4i$ and $-2-2.4i$ of
$P_4(0_N,0_N,0_N,A_N)=\frac{6}{5}A_N-2I_N$ (see Figure~\ref{Example4}).

\section{Free Probability Theory}\label{freeproba}
\subsection{{Scalar-valued free probability theory}}\label{sca}
For the reader's convenience, we recall the following basic definitions from free probability theory. For a thorough introduction to free probability theory, we refer to \cite{VDN}.
\begin{itemize}
\item A ${\cal C}^*$-probability space
is a pair $\left({\cal A}, \phi\right)$ consisting of a unital $ {\cal C}^*$-algebra ${\cal A}$
 and a state $\phi$ on ${\cal A}$ (i.e a linear map $\phi: {\cal A}\rightarrow \mathbb{C}$ such that $\phi(1_{\cal A})=1$ and $\phi(aa^*)\geq 0$ for all $a \in {\cal A}$)
 $\phi$ is a trace if it satisfies $\phi(ab)=\phi(ba)$ for every $(a,b)\in {\cal A}^2$. A trace is said to be faithful if $\phi(aa^*)>0$ whenever $a\neq 0$. 
An element of ${\cal A}$ is called a noncommutative random variable. 
\item The $\ast$-noncommutative distribution of a family $a=(a_1,\ldots,a_k)$ of noncommutative random variables in a ${\cal C}^*$-probability space $\left({\cal A}, \phi\right)$ is defined as the linear functional $\mu_a:P\mapsto \phi(P(a,a^*))$ defined on the set of polynomials in $2k$ noncommutative indeterminates, where $(a,a^*)$ denotes the $2k$-tuple $(a_1,\ldots,a_k,a_1^*,\ldots,a_k^*)$.
For any self-adjoint element $a_1$ in  ${\cal A}$,  there exists a probability  measure $\nu_{a_1}$ on $\mathbb{R}$ such that,   for every polynomial P, we have
$$\mu_{a_1}(P)=\int P(t) \mathrm{d}\nu_{a_1}(t).$$
Then,  we identify $\mu_{a_1}$ and $\nu_{a_1}$. If $\phi$ is faithful then the  support of $\nu_{a_1}$ is the spectrum of $a_1$  and thus  $\|a_1\| = \sup\{|z|, z\in \rm{support} (\nu_{a_1})\}$. 
\item A family of elements $(a_i)_{i\in I}$ in a ${\cal C}^*$-probability space  $\left({\cal A}, \phi\right)$ is free if for all $k\in \mathbb{N}$ and all polynomials $p_1,\ldots,p_k$ in two noncommutative indeterminates, one has 
\begin{equation}\label{freeness}
\phi(p_1(a_{i_1},a_{i_1}^*)\cdots p_k (a_{i_k},a_{i_k}^*))=0
\end{equation}
whenever $i_1\neq i_2, i_2\neq i_3, \ldots, i_{n-1}\neq i_k$ and $\phi(p_l(a_{i_l},a_{i_l}^*))=0$ for $l=1,\ldots,k$.
\item A   noncommutative random variable $x$  in a ${\cal C}^*$-probability space  $\left({\cal A}, \phi\right)$ is a standard  semicircular variable if
 $x=x^*$  and for any $k\in \mathbb{N}$, $$\phi(x^k)= \int t^k d\mu_{sc}(t)$$
where $d\mu_{sc}(t)=
\frac{1}{2\pi} \sqrt{4-t^2}{\1}_{[-2;2]}(t) dt$ is the semicircular standard distribution.
\item Let $k$ be a nonnull integer number. Denote by ${\cal P}$ the set of polynomials in $2k $ noncommutative indeterminates.
A sequence of families of variables $ (a_n)_{n\geq 1}  =
(a_1(n),\ldots, a_k(n))_{n\geq 1}$ in ${\cal C}^* $-probability spaces 
$\left({\cal A}_n, \phi_n\right)$ converges, when $n$ goes to infinity, respectively  in distribution if the map 
$P\in {\cal P} \mapsto
\phi_n(
P(a_n,a_n^*))$ converges pointwise.
\end{itemize}
\subsection{Operator-valued free probability theory}\label{ovfp}
\subsubsection{Basic definitions}
Operator-valued distributions and the operator-valued version of free probability were introduced by 
Voiculescu in \cite{ast} with the main purpose of studying freeness with amalgamation. Thus, an 
operator-valued noncommutative probability space is a triple $(M,E,B)$, where $M$ is a unital 
algebra over $\mathbb C$, $B\subseteq M$ is a unital subalgebra containing the unit of $M$, 
and $E\colon M\to B$ is a unit-preserving conditional expectation, that is, a linear $B$-bimodule 
map such that $E[1]=1$. We will only need the more restrictive context in which $M$ is a finite von 
Neumann algebra which is a factor, $B$ is a finite-dimensional von Neumann subalgebra of $M$ 
(and hence isomorphic to an algebra of matrices), and $E$ is the unique trace-preserving 
conditional expectation from $M$ to $B$. The $B$-valued distribution of an element $X\in M$ 
w.r.t. $E$ is defined to be the family of multilinear maps called the {\em moments} of $\mu_X$:
$$
\mu_X=\{B^{n-1}\ni(b_1,b_2,\dots,b_{n-1})\mapsto E[Xb_1Xb_2\cdots Xb_{n-1}X]\in B\colon n\ge0\},
$$
with the convention that the first moment (corresponding to $n=1$) is the element $E[X]\in B$, 
and the zeroth moment (corresponding to $n=0$) is the unit $1$ of $B$ (or $M$). The distribution
of $X$ is encoded conveniently by a noncommutative analytic transform defined for certain elements 
$b\in B$, which we agree to call the noncommutative Cauchy transform:
$$
G_X(b)=E\left[(X-b)^{-1}\right].
$$
(To be more precise, it is the noncommutative extension $G_{X\otimes1_n}(b)=
(E\otimes{\rm Id}_{{\cal M}_n(B)})\left[(X\otimes 1_n-b)^{-1}\right]$, for elements $B\in {\cal M}_n(B)$,
which completely encodes $\mu_X$ - see \cite{V}; since we do not need this extension, we
shall not discuss it any further, but refer the reader to \cite{V,Coalg,FAQ1,PV} for details.)
A natural domain for $G_X$ is the upper half-plane of $B$, $H^+(B)=\{b\in B\colon\Im b>0\}$.
It follows quite easily that $G_X(H^+(B))\subseteq H^+(B)$ - see \cite{FAQ1}. 

We warn here the reader that we have changed conventions in our paper compared to
\cite{Coalg,FAQ1,V}, namely we have chosen $G_X(b)=E\left[(X-b)^{-1}\right]$
instead of $E\left[(b-X)^{-1}\right]$, so that $G_X$ preserves $H^+(B)$.

Among many other results proved in \cite{ast}, one can find a central limit theorem for
random variables which are free with amalgamation. The central limit distribution is
called an {\em operator-valued semicircular}, by analogy with the free central limit
for the usual, scalar-valued random variables, which is Wigner's semicircular distribution.
It has been shown in \cite{ast} that an operator-valued semicircular distribution is 
entirely described by its operator-valued free cumulants: only the first and second
cumulants of an operator-valued semicircular distribution may be nonzero (see
also \cite{Mem,V}). For our purposes, we use the equivalent description of an 
operator-valued semicircular distribution via its noncommutative Cauchy
transform, as in \cite{HRS}: $S$ is a $B$-valued semicircular if and only if
$$
G_S(b)=\left(m_1-b-\eta(G_S(b))\right)^{-1},\quad b\in H^+(B),
$$
for some $m_1=m_1^*\in B$ and completely positive map $\eta\colon B\to B$. 
In that case, $m_1=E[S]$ and $\eta(b)=E[SbS]-E[S]bE[S]$. The above
equation is obviously a generalization of the quadratic equation determining 
Wigner's semicircular distribution: $\sigma^2G_S(z)^2+(z-m_1)G_S(z)+1=0$. Here
$m_1$ is the - classical - first moment of $S$, and $\sigma^2$ its classical variance,
which, as a linear completely positive map, is the multiplication with a positive constant. 
Unless otherwise specified, we shall from now on assume our semicirculars to be centered, i.e. $m_1=0$. 

\begin{example}\label{sem}
A rich source of examples of operator-valued semicirculars comes in the case of finite
dimensional $B$ from scalar-valued semicirculars: assume that $s_{i,j},1\le i\le j\le n$
are scalar-valued centered semicircular random variables of variance one. We do not 
assume them to be free. Then the matrix 
$$
\begin{bmatrix}
\alpha_1s_{1,1} & \gamma_{1,2}s_{1,2} & \gamma_{1,3}s_{1,3} &\cdots & \gamma_{1,n-1}s_{1,n-1} & \gamma_{1,n}s_{1,n} \\
\overline{\gamma_{1,2}}s_{1,2} & \alpha_2s_{2,2}& \gamma_{2,3}s_{2,3} &\cdots & \gamma_{2,n-1}s_{2,n-1} & \gamma_{2,n}s_{2,n} \\
\overline{\gamma_{1,3}}s_{1,3} & \overline{\gamma_{2,3}}s_{2,3} & \alpha_3s_{3,3} &\cdots & \gamma_{3,n-1}s_{3,n-1} & \gamma_{3,n}s_{3,n} \\
\vdots &\vdots &\vdots &\ddots & \vdots &\vdots \\
\overline{\gamma_{1,n-1}}s_{1,n-1} &\overline{\gamma_{2,n-1}}s_{2,n-1} & \overline{\gamma_{3,n-1}}s_{3,n-1} & \cdots & \alpha_{n-1}s_{n-1,n-1} &\gamma_{n-1,n}s_{n-1,n}\\
\overline{\gamma_{1,n}}s_{1,n} & \overline{\gamma_{2,n}}s_{2,n} & \overline{\gamma_{3,n}}s_{3,n} &
\cdots & \overline{\gamma_{n-1,n}}s_{n-1,n} & \alpha_ns_{n,n}
\end{bmatrix},
$$
where $\alpha_1,\dots,\alpha_n\in[0,+\infty)$ and $\gamma_{i,j}\in\mathbb C$, $1\le i<j\le n$, 
is an ${\cal M}_n(\mathbb C)$-valued semicircular. Note that we do allow our scalars to be zero. This is a 
particular case of a result from \cite{ieee}, and its proof can be found in great detail in \cite{MS}.
\end{example}

An important fact about semicircular elements, both scalar- and operator-valued, is that the sum of
two free semicircular elements is again a semicircular element (this follows from the fact that
a semicircular is defined by having all its cumulants beyond the first two equal to zero - see
\cite{ast}). In particular, if $\{s_{1,1}^{(1)},s_{1,2}^{(1)},s_{2,2}^{(1)},s_{1,1}^{(2)},
s_{1,2}^{(2)},s_{2,2}^{(2)}\}$ are centered all semicirculars of variance one, and in addition we 
assume them to be free from each other, then $\begin{bmatrix} s_{1,1}^{(1)} & s_{1,2}^{(1)}\\
s_{1,2}^{(1)} & s_{2,2}^{(1)}\end{bmatrix}$ and $\begin{bmatrix} s_{1,1}^{(2)} & is_{1,2}^{(2)}\\
-is_{1,2}^{(2)} & s_{2,2}^{(2)} \end{bmatrix}$ are ${\cal M}_2(\mathbb C)$-valued semicirculars which are
free over ${\cal M}_2(\mathbb C)$, so their sum $\begin{bmatrix} s_{1,1}^{(1)}+s_{1,1}^{(2)} & s_{1,2}^{(1)}+
 is_{1,2}^{(2)}\\s_{1,2}^{(1)}- is_{1,2}^{(2)} & s_{2,2}^{(1)}+s_{2,2}^{(2)}\end{bmatrix}$ is also an 
${\cal M}_2(\mathbb C)$-valued semicircular, despite its off-diagonal elements not being anymore 
distributed according to the Wigner semicircular law. This is hardly surprising: the two matrices
we have added are the limits of the real and imaginary parts of a G.U.E. (Gaussian Unitary Ensemble). 
The upper right corner of a G.U.E. is known to be a C.U.E. (Circular Unitary Ensemble), and its eigenvalues
converge to the uniform law on a disk. On the other hand, direct analytic computations show that
the sum $s_{1,2}^{(1)}\pm is_{1,2}^{(2)}$, with $s_{1,2}^{(1)}$ and $s_{1,2}^{(2)}$ free from each 
other, has precisely the same law. Thus, the following definition, due to Voiculescu, is natural.

\begin{definition}
An element $c$ in a ${}^*$-noncommutative probability space $(\mathcal A,\varphi)$ is called a {\em
circular random variable} if  $(c+c^*)/\sqrt{2}$ and $(c-c^*)/\sqrt{2}i$, respectively, 
are free from each other and identically distributed according to standard Wigner's semicircular law.
\end{definition}

\subsubsection{Preliminary results}\label{prelim}
We first establish preliminary results in free probability theory that we  will need in the following sections. 
\begin{lemma}\label{transfertliberte} Let $\{m_p^{(j)}, p=1,\ldots,4, j=1,\ldots,t\}$ be noncommutative random variables in some noncommutative probability space $(\mathcal{A}, \phi)$.
Let $s_i^{(1)}$,  $s_i^{(2)}$,$i=1, \ldots,u$  be   semicircular variables and $c_i$,$i=1, \ldots,u$ be    circular variables such that
$ s_1^{(1)}, \ldots, s_u^{(1)}, s_1^{(2)}, \ldots, s_u^{(2)}, c_1,\ldots, c_u, \{m_p^{(j)}, p=1,\ldots,4, j=1,\ldots,t\}$ are $\ast$-free in $(\mathcal{A}, \phi)$. Define for $i=1,\ldots,u$, $${\bf s_i}=\frac{1}{\sqrt{2}}\begin{pmatrix} s_i^{(1)} & c_i\\c_i^*  &  s_i^{(2)} \end{pmatrix}, $$ and  for $j=1,\ldots,t$, $${\bf m_j}=\begin{pmatrix} m_1^{(j)} & m_2^{(j)}\\m_3^{(j)}  &  m_4^{(j)}\end{pmatrix}.$$ Then, in the scalar-valued probability space $({\cal M}_2(\mathcal{A}), \tr_2\otimes \phi)$, ${\bf s_1}, \ldots, {\bf s_u},  \{{\bf m_j}, j=1,\ldots,t\}$ are free and 
for $i=1,\ldots,u$, each ${\bf s_i}$ is a semicircular variable. \end{lemma}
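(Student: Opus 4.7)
The plan is to prove the two assertions in turn: each $\mathbf{s}_i$ has standard semicircular distribution under $\tr_2\otimes\phi$, and the family $\{\mathbf{s}_1,\ldots,\mathbf{s}_u,\mathbf{m}_1,\ldots,\mathbf{m}_t\}$ is free. I would work in the $\mathcal{M}_2(\mathbb{C})$-valued probability space $(\mathcal{M}_2(\mathcal{A}), E, \mathcal{M}_2(\mathbb{C}))$ with $E=\mathrm{id}_{\mathcal{M}_2(\mathbb{C})}\otimes\phi$, and then pass to the scalar setting via $\tr_2\otimes\phi=\tr_2\circ E$.

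For the first assertion, write $c_i=(t_i^{(1)}+it_i^{(2)})/\sqrt{2}$ for the $\ast$-free standard semicircular real and imaginary parts of the circular $c_i$, so that the entries of $\mathbf{s}_i$ involve only $\ast$-free standard semicirculars $s_i^{(1)},s_i^{(2)},t_i^{(1)},t_i^{(2)}$. Example~\ref{sem} then identifies $\mathbf{s}_i$ as a centered $\mathcal{M}_2(\mathbb{C})$-valued semicircular with respect to $E$. A direct computation of $\eta_i(b)=E[\mathbf{s}_i\, b\, \mathbf{s}_i]$ for $b\in \mathcal{M}_2(\mathbb{C})$, using $\ast$-freeness and the normalizations $\phi((s_i^{(k)})^2)=\phi(c_i c_i^*)=\phi(c_i^* c_i)=1$, yields $\eta_i(b)=\tr_2(b)\,I_2$. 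Plugging this scalar covariance into the fixed-point equation $G_{\mathbf{s}_i}(b)=(-b-\eta_i(G_{\mathbf{s}_i}(b)))^{-1}$ at scalar arguments $b=zI_2$ forces $G_{\mathbf{s}_i}(zI_2)=g(z)I_2$, with $g$ solving $g(z)^2+zg(z)+1=0$, which is precisely the equation for the Cauchy transform of the standard semicircular law. Hence $\mathbf{s}_i$ has the standard semicircular distribution under $\tr_2\otimes\phi$.

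For the second assertion, set $\mathcal{A}_i=\ast\text{-alg}(1,s_i^{(1)},s_i^{(2)},c_i)$ for $i=1,\ldots,u$ and $\mathcal{B}_j=\ast\text{-alg}(1,m_1^{(j)},\ldots,m_4^{(j)})$ for $j=1,\ldots,t$; by hypothesis these are $\ast$-free subalgebras of $(\mathcal{A},\phi)$. An entry-wise argument shows that $\mathcal{M}_2(\mathcal{A}_1),\ldots,\mathcal{M}_2(\mathcal{A}_u),\mathcal{M}_2(\mathcal{B}_1),\ldots,\mathcal{M}_2(\mathcal{B}_t)$ are free with amalgamation over $\mathcal{M}_2(\mathbb{C})$ with respect to $E$: any alternating product of $E$-centered matrices has, entry by entry, the form of a sum of alternating products of $\phi$-centered scalars lying in distinct $\ast$-free subalgebras of $\mathcal{A}$, and scalar $\ast$-freeness kills $\phi$ of each such entry. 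In particular $\mathbf{s}_1,\ldots,\mathbf{s}_u,\mathbf{m}_1,\ldots,\mathbf{m}_t$ are $\mathcal{M}_2(\mathbb{C})$-free over $E$.

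The delicate step is to upgrade this $\mathcal{M}_2(\mathbb{C})$-freeness to scalar freeness in $(\mathcal{M}_2(\mathcal{A}),\tr_2\otimes\phi)$, since in general the two notions do not coincide. The key input is the scalar form $\eta_i=\tr_2(\cdot)\,I_2$ of the semicircular covariances: expanding a $\tr_2\otimes\phi$-centered alternating product in the $\mathbf{s}_i$ and $\mathbf{m}_j$ via the $\mathcal{M}_2(\mathbb{C})$-valued moment--cumulant formula, each pair cumulant originating from an $\mathbf{s}_i$-block contributes a factor $\tr_2(\cdot)\,I_2$ that can be pulled out as a scalar multiple of the identity; iterating this collapses the $\tr_2\otimes\phi$ of the full product into a sum of products of scalar $\tr_2\otimes\phi$-moments of the constituent polynomials, which vanish by centering. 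The main obstacle is the careful bookkeeping of this reduction --- in particular the distinction between $E$-centering (matrix-valued, strictly stronger) and $\tr_2\otimes\phi$-centering (scalar) --- which relies essentially on the scalar form of $\eta_i$.
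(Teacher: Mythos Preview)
Your approach is essentially the same as the paper's: both hinge on the covariance computation $\eta_i(b)=\tr_2(b)\,I_2$ and use its scalar form to descend from $\mathcal{M}_2(\mathbb{C})$-valued freeness to scalar freeness under $\tr_2\otimes\phi$. The paper carries out the descent cleanly by invoking \cite[Theorem 3.5]{NSS} twice (once to pass from $\mathcal{M}_2(\mathbb{C})$-cumulants to $\mathcal{M}_2(\mathcal{B})$-cumulants, and once more to drop to $\mathbb{C}$-cumulants), which is exactly the abstract result that packages the ``careful bookkeeping'' you identify as the main obstacle. Your moment--cumulant sketch is the right idea but incomplete; citing \cite{NSS} would close it.

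Two small corrections. First, the hypotheses do \emph{not} give $\ast$-freeness between the different $\mathcal{B}_j$'s: the $m_p^{(j)}$ are lumped into a single family in the freeness assumption, and the conclusion only asserts freeness of the $\mathbf{s}_i$'s from the whole collection $\{\mathbf{m}_j\}$, not mutual freeness of the $\mathbf{m}_j$'s. So replace your $\mathcal{B}_1,\ldots,\mathcal{B}_t$ by the single algebra $\mathcal{B}$ they generate jointly. Second, Example~\ref{sem} as stated takes semicircular entries, so to apply it to $\mathbf{s}_i$ (which has circular off-diagonals) you should split $\mathbf{s}_i$ as a sum of two free $\mathcal{M}_2(\mathbb{C})$-valued semicirculars built from $t_i^{(1)}$ and $t_i^{(2)}$ respectively, exactly as in the discussion following Example~\ref{sem}; this is routine but worth making explicit.
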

\begin{proof}Let us prove that ${\bf s_1}, \ldots, {\bf s_u}$ is free from $\mathcal M_2(\mathcal{B})$, where $\mathcal{B}$ is the $\ast$-algebra generated by $\{m_p^{(j)}, p=1,\ldots,4, j=1,\ldots,t\}$. We already now  (see \cite[Chapter 9]{MS}) that ${\bf s_1}, \ldots, {\bf s_u}$ are semicircular variables over $\mathcal M_2(\mathbb{C})$ which are free from $\mathcal M_2(\mathcal{B})$, with respect to $id_2\otimes \phi$. Moreover, the covariance mapping of ${\bf s_1}, \ldots, {\bf s_u}$ is the function $(\eta^{\mathcal M_2(\mathbb{C})}_{i,j}:{\cal M}_2(\mathbb{C})\to {\cal M}_2(\mathbb{C}))_{1\leq i,j \leq u}$, which can be computed as follows: for all $m=\left(\begin{array}{cc}
m_1 & m_2 \\ 
m_3 & m_4
\end{array} \right)\in {\cal M}_2(\mathbb{C}),$
we have
\begin{align*}\eta^{\mathcal M_2(\mathbb{C})}_{i,j}(m)&=(\id_2\otimes \phi)({\bf s}_{i}m{\bf s}_{j})\\
&=\frac{1}{2}\left(\begin{array}{c|c}
\phi(s_i^{(1)}m_1s_j^{(1)}) +\phi(s_i^{(1)} m_2 c_j^*)+\phi(c_i m_3s_j^{(1)})+ \phi(c_i m_4c_j^*)& \star \\ 
\hline  \star & \star \\ 
\end{array} \right)\\
&=\frac{\delta_{ij}}{2}\left(\begin{array}{c|c}
m_1 +m_4& 0\\ 
\hline 0&m_1 +m_4 \\ 
\end{array} \right)\\
&=\delta_{ij}\tr_2(m)I_2.
\end{align*}
Using \cite[Theorem 3.5]{NSS}, the freeness of ${\bf s_1}, \ldots, {\bf s_u}$ from ${\cal M}_2(\mathcal{B})$ over ${\cal M}_2(\mathbb{C})$ gives us the free cumulants of ${\bf s_1}, \ldots, {\bf s_u}$ over ${\cal M}_2(\mathcal{B})$. More concretely, we get that ${\bf s_1}, \ldots, {\bf s_u}$ are semicircular variables over ${\cal M}_2(\mathcal{B})$, with a covariance mapping $(\eta^{{\cal M}_2(\mathcal{B})}_{i,j}:{\cal M}_2(\mathcal{B})\to {\cal M}_2(\mathcal{B}))_{1\leq i,j \leq u}$ given by $\eta^{{\cal M}_2(\mathcal{B})}_{i,j}=\eta^{{\cal M}_2(\mathbb{C})}_{i,j}\circ (id_2\otimes \phi) $.

Because of the previous computation, we know that $\eta^{{\cal M}_2(\mathbb{C})}_{i,j}=\tr_2\circ\eta^{{\cal M}_2(\mathbb{C})}_{i,j}\circ \tr_2$, which means that $\eta^{{\cal M}_2(\mathcal{B})}_{i,j}=(\tr_2\otimes \phi)\circ \eta^{{\cal M}_2(\mathcal{B})}_{i,j}\circ (\tr_2\otimes \phi)$. As a consequence, using again \cite[Theorem 3.5]{NSS}, ${\bf s_1}, \ldots, {\bf s_u}$ are semicircular variables over $\mathbb{C}$ free from $M_2(\mathcal{B})$ with respect to $(\tr_2\otimes \phi)$, and the covariance mapping $\eta^{\mathbb{C}}_{i,j}$ is given by the restriction of the covariance mapping $\eta^{{\cal M}_2(\mathbb{C})}$ to $\mathbb{C}$: for all $m\in \mathbb{C}$
$$\eta^{\mathbb{C}}_{i,j}(m)=\delta_{ij}m,$$
which means that ${\bf s_1}, \ldots, {\bf s_u}$ are free standard semicircular variables.
\end{proof}

\begin{lemma}\label{symm}
Let $y$ be a noncommutative random variable in ${\cal M}_m({\cal A})$ and $c^{(1)},\dots,c^{(u)}$ be free circular 
variables in ${\cal A}$ free from the entries of $y$. Then, in the operator-valued probability space $({\cal M}_m({\cal A}), id_m\otimes \phi)$, 
$|\sum_{j=1}^u\zeta_j\otimes c^{(j)}+y|^2$ has the same distribution as $|\sum_{j=1}^u\zeta_j\otimes s_j+(I_m\otimes \epsilon) \cdot y|^2$ where $\epsilon$ is a selfadjoint $\{-1,+1\}$-Bernoulli variable in ${\cal A}$, independent from the entries of $y$, and $s_1,\ldots,s_u$ are free semicircular variables in ${\cal A}$, free from $\epsilon$ and the entries of $y$.
\end{lemma}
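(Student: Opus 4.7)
The plan has two main steps: a unitary change-of-variables that moves the Bernoulli sign from the $y$-term to each $s_j$-term, followed by the identification of $(\epsilon s_1,\dots,\epsilon s_u)$ as a free standard circular system.

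First, since $\epsilon^*=\epsilon$ and $\epsilon^2=1$, the element $I_m\otimes\epsilon$ is a unitary in $\mathcal{M}_m(\mathcal{A})$, so left-multiplication by it preserves $|\cdot|^2$. A direct computation using $\epsilon^2=1$ gives
$$
(I_m\otimes\epsilon)\Bigl(\sum_{j=1}^u \zeta_j\otimes s_j+(I_m\otimes\epsilon)y\Bigr)=\sum_{j=1}^u \zeta_j\otimes\epsilon s_j+y=:\tilde X,
$$
and therefore $\bigl|\sum_j\zeta_j\otimes s_j+(I_m\otimes\epsilon)y\bigr|^2=|\tilde X|^2$ holds as an identity in $\mathcal{M}_m(\mathcal{A})$. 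Setting $X=\sum_j\zeta_j\otimes c^{(j)}+y$, it suffices to show that $|X|^2$ and $|\tilde X|^2$ have the same $\mathcal{M}_m(\mathbb{C})$-valued distribution under $\mathrm{id}_m\otimes\phi$. Since $y$ appears unchanged and is free from $\{c^{(j)}\}_j$ on one side and from $\{\epsilon,s_1,\dots,s_u\}\supseteq\{\epsilon s_j\}_j$ on the other, this reduces to the purely scalar statement that $(c^{(1)},\dots,c^{(u)})$ and $(\epsilon s_1,\dots,\epsilon s_u)$ have the same joint $*$-distribution in $(\mathcal{A},\phi)$.

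To establish this I would prove that $(\epsilon s_1,\dots,\epsilon s_u)$ is a free standard circular family. The cleanest route is a direct $*$-moment computation: an arbitrary word $\prod_{i=1}^n(\epsilon s_{j_i})^{\varepsilon_i}$ with $\varepsilon_i\in\{1,*\}$ expands into a product of $\epsilon$'s and $s_{j_i}$'s in which, after using $\epsilon^2=1$ to collapse adjacent $\epsilon$'s, the remaining $\epsilon$'s separate blocks of $s$'s. Because $\epsilon$ is free from $\mathrm{alg}(s_1,\dots,s_u)$ with $\phi(\epsilon)=0$, the alternating-centered rule for freeness reduces $\phi$ of this word to a sum of centered moments of the $s_j$'s, and one checks that the result matches the non-crossing pair-partition formula characterizing free circular $*$-moments, namely pairings of equal index and opposite $\varepsilon$'s. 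Alternatively, one may apply the Nica--Speicher product-cumulant formula to $\epsilon s_j=\epsilon\cdot s_j$, noting that the only partitions on the semicircular side which contribute are pair partitions with matching indices, forcing the Kreweras complement of the contributing $\pi$'s to have the structure that kills all $*$-cumulants of order $\ne 2$ and gives $\kappa_2(\epsilon s_j,s_k\epsilon)=\delta_{jk}$.

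The main obstacle is the combinatorial bookkeeping in this moment identification, slightly awkward because $(\epsilon s_j)^*=s_j\epsilon$ reverses the order of $\epsilon$ and $s$; this is handled by a symmetric case analysis on the pattern of $\varepsilon_i$'s. Once the $*$-distributional identity is established, combining it with the unitary reduction yields the $\mathcal{M}_m(\mathbb{C})$-valued equality $|X|^2\stackrel{d}{=}|\tilde X|^2=|X'|^2$, completing the proof.
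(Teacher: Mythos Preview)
Your unitary reduction in the first step is correct and elegant: left-multiplying by the unitary $I_m\otimes\epsilon$ does turn $\sum_j\zeta_j\otimes s_j+(I_m\otimes\epsilon)y$ into $\sum_j\zeta_j\otimes\epsilon s_j+y$ without changing $|\cdot|^2$. The gap is in the next sentence. You assert that $y$ is ``free from $\{\epsilon,s_1,\dots,s_u\}$ on the other'' side, and use this to reduce to a purely scalar $*$-distribution comparison. But in this lemma ``independent'' means \emph{tensor}-independent: $\epsilon$ commutes with the entries of $y$ and $\phi$ factorizes, as the paper spells out immediately after the statement. Tensor independence is not freeness; for centered $b$ in the algebra generated by the entries of $y$ one has $\phi(\epsilon b\epsilon b)=\phi(b^2)\neq 0$ in general, so $\epsilon$ is \emph{not} free from that algebra. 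Hence the containment $\{\epsilon s_j\}\subset\{\epsilon,s_1,\dots,s_u\}$ does not give you freeness of $\{\epsilon s_j\}$ from the entries of $y$, and the reduction to a scalar statement is unjustified as written.

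It may well be true that the $*$-algebra generated by $\{\epsilon s_j\}_j$ is free from the entries of $y$ and that $(\epsilon s_1,\dots,\epsilon s_u)$ is a free standard circular family (low-order checks are consistent with both claims), but neither is immediate: the first requires controlling the nonvanishing mixed free cumulants between $\epsilon$ and the $y_{kl}$'s, and the second does not drop out of the usual $R$-diagonal machinery since a nonzero self-adjoint $\epsilon$ is not $R$-diagonal. Your sketch does not address these points. By contrast, the paper bypasses all of this by a direct operator-valued moment computation: it expands $(id_m\otimes\phi)(|X|^{2n})$ via the ${\cal M}_m(\mathbb C)$-valued moment--cumulant formula, observes that on the circular side only alternating pair partitions survive, while on the semicircular side only pair partitions survive \emph{and} the odd $(I_m\otimes\epsilon)y$-moments vanish by the tensor independence of $\epsilon$, and checks that the two surviving index sets coincide. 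So the tensor independence of $\epsilon$ from $y$ is used in an essential way by the paper, precisely where your argument misreads it as freeness.
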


In the lemma above, we consider the symmetric version $\epsilon y$ of $y$, thanks to a noncommutative random variable $\epsilon$ which is \emph{tensor}-independent from the entries of $y$, in the sense that $\epsilon$ commutes with the entries of $y$ and $\phi(p_1(\epsilon)p_2(y_{i,j},y^*_{i,j}:i,j))=\phi(p_1(\epsilon))\phi(p_2(y_{i,j},y^*_{i,j}:i,j))$ for all polynomials $p_1,p_2$.

\begin{proof}Let $n\geq 0$. We compute the $n$-th moment of $| \sum_{j=1}^u \zeta_j \otimes c^{(j)} + y|^2 $ with respect to $id_m\otimes \phi$, and compare it to the $n$-th moment of $| \sum_{j=1}^u \zeta_j \otimes s_j + \epsilon y|^2 $ with respect to $id_m\otimes \phi$.

\noindent Let us set $a_0=y$ and $a_j= \zeta_j \otimes c^{(j)}$. We compute
\begin{align*}&id_m\otimes \phi(| \sum_{j=1}^u \zeta_j \otimes c^{(j)} + y|^{2n})\\
&=\sum_{0\leq i_1,\ldots,i_{2n}\leq u} id_m\otimes \phi(a_{i_1}a_{i_2}^* a_{i_3} a_{i_4}^* \ldots a_{i_{2n-1}}a_{i_{2n}}^*).
\end{align*}
Similarly,
\begin{align*}&id_m\otimes \phi(| \sum_{j=1}^u \zeta_j \otimes s_j + (I_m\otimes \epsilon) \cdot y|^{2n})\\
&=\sum_{0\leq i_1,\ldots,i_{2n}\leq u} id_m\otimes \phi(b_{i_1}b_{i_2}^* b_{i_3} b_{i_4}^* \ldots b_{i_{2n-1}}b_{i_{2n}}^*).
\end{align*}
where $b_0=(I_m\otimes \epsilon) \cdot y$ and $b_j=\zeta_j \otimes s_j$. In order to conclude, it suffices to prove that, for all $0\leq i_1,\ldots,i_{2n}\leq u$,
$$ id_m\otimes \phi(a_{i_1}a_{i_2}^* a_{i_3} a_{i_4}^* \ldots a_{i_{2n-1}}a_{i_{2n}}^*)=id_m\otimes \phi(b_{i_1}b_{i_2}^* b_{i_3} b_{i_4}^* \ldots b_{i_{2n-1}}b_{i_{2n}}^*).$$
Let us fix $0\leq i_1,\ldots,i_{2n}\leq u$.
Note that $a_0$ is free over ${\cal M}_m(\mathbb{C})$ from $a_j$ with
respect to $id_m\otimes \phi$ (see \cite[Chapter 9]{MS}). Let us fix $S=\{j:i_j\neq 0\}\subset \{1,\ldots,2n\}$ and use the moment cumulant formula (see \cite[page 36]{Mem}):
\begin{align*}&id_m\otimes \phi(a_{i_1}a_{i_2}^* a_{i_3} a_{i_4}^* \ldots a_{i_{2n-1}}a_{i_{2n}}^*)\\
&=\sum_{\pi\in NC(S)} (\hat{c}\cup \hat{\phi})(\pi \cup \pi^c)(a_{i_1}\otimes a_{i_2}^* \ldots a_{i_{2n-1}}\otimes a_{i_{2n}}^*)
\end{align*}
where $\pi^c$ is the largest partition of $S^c$ such that $\pi \cup \pi^c$ is noncrossing and $\hat{c}$ and $\hat{\phi}$ are the ${\cal M}_m(\mathbb{C})$-valued cumulant function and the ${\cal M}_m(\mathbb{C})$-valued moment function associated to the conditional expectation $id_m\otimes \phi$. We use here the notation of \cite[Notation 2.1.4]{Mem} which defines $(\hat{c}\cup \hat{\phi})(\pi \cup \pi^c)$ as some ${\cal M}_m(\mathbb{C})$-valued multiplicative function that acts on the blocks of $\pi$ like $\hat{c}$ and on the blocks of $\pi^c$ like $\hat{\phi}$.

Recall that the cumulants of $\zeta_j \otimes c^{(j)}$ are vanishing if $\pi$ is not a pairing and if $\pi$ is not alternating (which means that $\pi$ links two indices with the same parity). Now, let us remark that if $\pi$ is a pairing which is alternating, then $\pi^c$ is even (each blocs of $\pi^c$ is even). Thus,
\begin{align*}&id_m\otimes \phi(a_{i_1}a_{i_2}^* a_{i_3} a_{i_4}^* \ldots a_{i_{2n-1}}a_{i_{2n}}^*)\\
&=\sum_{\substack{\pi\in NC(S)\\\pi \text{ pairing and alternating}}} (\hat{c}\cup \hat{\phi})(\pi \cup \pi^c)(a_{i_1}\otimes a_{i_2}^* \ldots a_{i_{2n-1}}\otimes a_{i_{2n}}^*)\\
&=\sum_{\substack{\pi\in NC(S)\\\pi \text{ pairing and alternating}\\\pi^c \text{ even}}} (\hat{c}\cup \hat{\phi})(\pi \cup \pi^c)(a_{i_1}\otimes a_{i_2}^* \ldots a_{i_{2n-1}}\otimes a_{i_{2n}}^*).
\end{align*}

Similarly, the cumulants of $\zeta_j \otimes s^{(j)}$ are vanishing if $\pi$ is not a pairing and that the moment of $b_0$ is vanishing if $\pi^c$ is  odd. Moreover, if $\pi$ is a pairing and $\pi^c$ is even, then $\pi$ is alternating. As a consequence,
\begin{align*}&id_m\otimes \phi(b_{i_1}b_{i_2}^* b_{i_3} b_{i_4}^* \ldots b_{i_{2n-1}}b_{i_{2n}}^*)\\
&=\sum_{\pi\in NC(S)} (\hat{c}\cup \hat{\phi})(\pi \cup \pi^c)(b_{i_1}\otimes b_{i_2}^* \ldots b_{i_{2n-1}}\otimes b_{i_{2n}}^*)\\
&=\sum_{\substack{\pi\in NC(S)\\\pi \text{ pairing}\\\pi^c \text{ even}}} (\hat{c}\cup \hat{\phi})(\pi \cup \pi^c)(b_{i_1}\otimes b_{i_2}^* \ldots b_{i_{2n-1}}\otimes b_{i_{2n}}^*)\\
&=\sum_{\substack{\pi\in NC(S)\\\pi \text{ pairing and alternating}\\\pi^c \text{ even}}} (\hat{c}\cup \hat{\phi})(\pi \cup \pi^c)(b_{i_1}\otimes b_{i_2}^* \ldots b_{i_{2n-1}}\otimes b_{i_{2n}}^*).
\end{align*}
In order to conclude, it suffices to remark that $\epsilon y$ and $y$ has the same even ${\cal M}_m(\mathbb{C})$-valued moments and $\zeta_j \otimes c^{(j)}$ and $\zeta_j \otimes s^{(j)}$ has the same alternating ${\cal M}_m(\mathbb{C})$-valued cumulants.
\end{proof}

It follows from \cite{CAOT} that the support in ${\cal M}_m(\mathbb C)^{\rm sa}$ of the addition of   a semicircular $s$  of variance $\eta$
and a  selfadjoint noncommutative random variable $y\in ({\cal M}_m({\cal A}),id_m\otimes \phi)$ which is free with amalgamation over ${\cal M}_m(\C)$ with $s$,  is given via its complement in 
terms of $y$ and the functions 
\begin{equation}\label{funk}
H(w)=w-\eta(G_y(w)) \text{ and }\omega(b)=b+\eta(G_y(\omega(b)),
\end{equation}
where $G_x(b) =id_m\otimes \phi\left[(x-b)^{-1}\right]$.
\begin{proposition}\label{2}
If $w\in {\cal M}_m(\mathbb C)^{\rm sa}$ is such that $y-w$ is invertible and $\text{spect}(\eta\circ G_y'(w))
\subset\overline{\mathbb D}\setminus\{1\}$, then $s+y-H(w)$ is invertible. Conversely, if $b\in 
{\cal M}_m(\mathbb C)^{\rm sa}$ is such that $s+y-b$ is invertible, then $y-\omega(b)$ is invertible. 
\end{proposition}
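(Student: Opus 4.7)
The proposition is the precise translation, in the language of invertibility, of the characterization of $\mathrm{spect}(s+y)$ given in \cite{CAOT} through the subordination functions $H$ and $\omega$. The plan is therefore to extract the two implications from that characterization, starting from the well-understood subordination on the upper half-plane: on $H^+({\cal M}_m(\mathbb{C}))$ one has $\omega(b)=b+\eta(G_{s+y}(b))$ and $G_{s+y}(b)=G_y(\omega(b))$, with $\omega:H^+\to H^+$ analytic, and $H\circ\omega=\mathrm{id}$ on $\omega(H^+)$.

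For the converse direction, suppose $s+y-b$ is invertible with $b=b^*$. Since invertible elements form an open set in the $C^*$-algebra, for small $\varepsilon>0$ the element $s+y-(b+i\varepsilon I_m)$ is also invertible, and its resolvent converges in norm to $(s+y-b)^{-1}$ as $\varepsilon\downarrow 0$. Applying the subordination formula on $H^+$ to $b+i\varepsilon I_m$ and passing to the limit yields a self-adjoint candidate $w:=b+\eta(G_{s+y}(b))$, which we define to be $\omega(b)$. One then verifies that $y-w$ is invertible: the Cauchy transforms $G_y(\omega(b+i\varepsilon I_m))=G_{s+y}(b+i\varepsilon I_m)$ stay uniformly bounded, and the content of \cite{CAOT} is that this boundedness, combined with the fact that $b$ lies outside $\mathrm{spect}(s+y)$, forces $y-\omega(b)$ itself to be invertible in ${\cal M}_m({\cal A})$ (not merely that the scalar-valued Cauchy transform has a boundary value).

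For the forward direction, suppose $y-w$ is invertible with $w=w^*$, and $\mathrm{spect}(\eta\circ G_y'(w))\subset\overline{\mathbb{D}}\setminus\{1\}$. The condition $1\notin\mathrm{spect}(\eta\circ G_y'(w))$ makes $H'(w)=I-\eta\circ G_y'(w)$ invertible in $\mathrm{End}({\cal M}_m(\mathbb{C}))$, so by the inverse function theorem $H$ is a local biholomorphism at $w$, with a local inverse sending a neighborhood of $b=H(w)$ into a neighborhood of $w$. The full spectral bound ensures that this local inverse agrees with the global subordination $\omega$ on the intersection of their domains in $H^+$, and in particular that $\omega$ extends analytically through the boundary point $b$ with value $w$. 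The characterization of $\mathrm{spect}(s+y)^c$ from \cite{CAOT} then identifies $b=H(w)$ as a point where $s+y-b$ is invertible.

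The principal obstacle in both directions is the transfer from the analytic subordination identities on $H^+({\cal M}_m(\mathbb{C}))$ to pointwise invertibility statements on the self-adjoint boundary. The converse requires upgrading ``the Cauchy transform extends continuously'' to ``the operator-valued resolvent extends'', while the forward direction requires showing that the local analytic inverse of $H$ actually realizes the global subordination function near $b$. Ruling out such ``tangential'' singularities is exactly what the spectral condition $\mathrm{spect}(\eta\circ G_y'(w))\subset\overline{\mathbb{D}}\setminus\{1\}$ encodes, and establishing this link rigorously is the technical substance of \cite{CAOT} that we invoke.
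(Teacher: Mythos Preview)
Your outline captures the right strategy---analytic continuation of the subordination relations across the selfadjoint boundary---but the forward direction has a genuine gap at the step where you assert ``the full spectral bound ensures that this local inverse agrees with the global subordination $\omega$.'' The condition $1\notin\text{spect}(\eta\circ G_y'(w))$ alone gives you a local analytic inverse of $H$ near $H(w)$ via the inverse function theorem, but nothing identifies it with $\omega$. The paper supplies the missing mechanism: since $w=w^*$ and $\eta$ is completely positive, the map $\eta\circ G_y'(w)$ is completely positive; by Evans--H{\o}egh-Krohn \cite[Theorem~2.5]{EH} its spectral radius $r$ is attained at a positive eigenvector, hence is nonnegative. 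Combined with the hypothesis $\text{spect}(\eta\circ G_y'(w))\subset\overline{\mathbb D}\setminus\{1\}$, this forces $r<1$, so in fact the whole spectrum sits in $r\overline{\mathbb D}\subsetneq\mathbb D$. Only then is $w$ an \emph{attracting} fixed point of $v\mapsto H(w)+\eta(G_y(v))$, and since $\omega(b)$ is constructed as the Denjoy--Wolff attracting fixed point of $v\mapsto b+\eta(G_y(v))$, the local inverse of $H$ must coincide with $\omega$ on the upper half-plane near $H(w)$. Simply deferring to \cite{CAOT} here hides that this Perron--Frobenius type step is exactly what converts the hypothesis into the contraction property; without it one could in principle have complex eigenvalues of modulus one different from $1$, which would not block invertibility of $H'(w)$ but would kill the attracting fixed-point argument.

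For the converse your $b+i\varepsilon I_m$ limiting scheme is in the right spirit, but the paper's route is more direct and avoids the issue of upgrading boundedness to invertibility that you flag: invertibility of $s+y-b$ makes $G_{s+y}$, and hence $\omega(\cdot)=\cdot+\eta(G_{s+y}(\cdot))$, analytic on a full neighborhood of $b$; then \cite[Proposition~4.1]{CAOT} gives $\text{spect}(\omega'(v))\subset\{\Re z>1/2\}$ on the upper half-plane, so by continuity $\omega$ is locally invertible at $b$ with analytic inverse $H$; composing the subordination $G_{s+y}=G_y\circ\omega$ with $H$ shows $G_y$ is analytic and selfadjoint-valued near $\omega(b)$, which is what yields invertibility of $y-\omega(b)$.
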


It follows quite easily that $\text{spect}(\eta\circ G_y'(\omega(b)))\subset\overline{\mathbb D}$.
Generally, all conditions on the derivatives of $\omega$ and $H$ follow from the two functional
equations above.

\begin{proof} Assume that $y-w$ is invertible and $\text{spect}(\eta\circ G_y'(w))\subset\overline{\mathbb 
D}\setminus\{1\}$. Since $w=w^*$, the derivative $G'_y(w)$ is completely positive, so $\eta\circ G_y'(w)$
is completely positive. This means according to \cite[Theorem 2.5]{EH} that the spectral
radius $r$ of $\eta\circ G_y'(w)$ is reached at a positive element $\xi\in M_m(\mathbb C)$, so that
necessarily $r\ge0$. Since $1\not\in\sigma(\eta\circ G_y'(w))$ by hypothesis, it follows that $r<1$,
and thus
$$
\text{spect}(\eta\circ G_y'(w))\subseteq r\overline{\mathbb D}\subsetneq\mathbb D.
$$
This forces the derivative of $H(w)$, $H'(w)=\mathrm{Id}_{M_m(\mathbb C)}-\eta\circ G'_y(w))$,
to be invertible as a linear operator from ${\cal M}_m(\mathbb C)$ to itself. By the inverse function
theorem, $H$ has an analytic inverse on a small enough neighborhood of $H(w)$ onto a 
neighborhood of $w$. Since $H$ preserves the selfadjoints near $w$, so must the inverse.
On the other hand, the map $v\mapsto H(w)+\eta(G_y(v))$ sends the upper half-plane into itself
and has $w$ as a fixed point. Since its derivative has all its eigenvalues included {\em strictly}
in $\mathbb D$ (recall that the spectral radius $r<1$), it follows that $w$ is actually an attracting
fixed point for this map. Since for any $b$ in the upper half-plane, $\omega(b)$ is given as the
attracting fixed point of $v\mapsto b+\eta(G_y(v))$, it follows that $\omega$ coincides with the
local inverse of $H$ on the upper half-plane, so the local inverse of $H$ is the unique analytic 
continuation of $\omega$ to a neighborhood of $H(w)$.  This proves that $\omega$ extends
analytically to a neighborhood of $H(w)$ and the extension maps selfadjoints from this neighborhood
to ${\cal M}_m(\mathbb C)^{\rm sa}$. In particular, $\omega(H(v))=v$ and $G_{s+y}(H(v))=
G_y(\omega(H(v)))=G_y(v)$ are selfadjoint for all $v=v^*$ in a small enough neighborhood of $w$,
showing that $s+y-H(w)$ is invertible.

Conversely, say $b=b^*$ and $s+y-b$ is invertible. Then $G_{s+y}$ is analytic on a neighborhood
of $b$ and maps selfadjoints from this neighborhood into ${\cal M}_m(\mathbb C)^{\rm sa}$. Since
$\omega(b)=b+\eta(G_{s+y}(b))$, the same holds for $\omega$. Since, by \cite[Proposition 4.1]{CAOT},
$\text{spect}(\omega'(v))\subset\{\Re z>1/2\}$ for any $v$ in the upper half-plane, the analyticity of 
$\omega$ around $b=b^*$ implies $\text{spect}(\omega'(0))\subset\{\Re z\ge1/2\}$. Thus, $\omega$ is 
invertible wrt composition around zero by the inverse function theorem. As argued above, $H$ is its
inverse, and extends analytically to a small enough neighborhood of $\omega(b)$, with selfadjoint
values on the selfadjoints. Composing with $H$ to the left in Voiculescu's subordination relation
$G_{s+y}(v)=G_y(\omega(v))$ yields $G_{y+s}(H(w))=G_y(w)$, guaranteeing that $G_y$ is analytic
on a neighborhood of $\omega(b)$, with selfadjoint values on the selfadjoints, and so $y-\omega(b)$
must be invertible.
\end{proof}

\begin{remark}\label{meh}
The proof of the previous proposition, based on \cite[Theorem 2.5]{EH}, makes the condition
$\text{spect}(\eta\circ G'_y(0))\subseteq\overline{\mathbb D}\setminus\{1\}$ equivalent to the existence
of an $r\in[0,1)$ such that $\text{spect}(\eta\circ G'_y(0))\subseteq r\overline{\mathbb D}$.
\end{remark}

The following lemma is a particular case of the above proposition.

\begin{lemma}\label{Serban}  Consider the operator-valued $\cal C^*$-algebraic
noncommutative probability space $({\cal M}_m({\mathcal A}),id_m\otimes \phi, {\cal M}_m(\mathbb{C}))$ and a pair of selfadjoint random variables $s,y\in {\cal M}_m({\mathcal A})$ which are free over ${\cal M}_m(\mathbb{C})$ with
respect to $id_m\otimes \phi$. Assume that $s$ is a centered semicircular of variance $\eta\colon {\cal M}_m(\mathbb{C})\to {\cal M}_m(\mathbb{C})$ and that each entry of  $y \in {\cal M}_m({\mathcal A})$ is a noncommutative symmetric random variable in $({\mathcal A}, \phi)$.
We define $G_x(b)=id_m\otimes \phi\left[(x-b)^{-1}\right]$. Then  $s+y$ is invertible if and only if 
$0\not\in\text{spect}(y)$ and  $\text{spect}(\eta\circ G'_y(0))$ is included in $\overline{\mathbb D}
\setminus\{1\}$.
\end{lemma}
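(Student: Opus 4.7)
The strategy is to deduce the lemma as the special case $w=0$, $b=0$ of Proposition~\ref{2}. From the definitions $H(w)=w-\eta(G_y(w))$ and $\omega(b)=b+\eta(G_y(\omega(b)))$, the two invertibility statements of Proposition~\ref{2} reduce respectively to ``$s+y$ invertible'' and ``$y$ invertible'' precisely when $G_y(0)=0$ and $\omega(0)=0$. The key observation is that symmetry secures both of these identities: from the assumption that each entry of $y$ is symmetric in $(\mathcal A,\phi)$, one shows that $y\sim -y$ as an $\mathcal M_m(\mathbb C)$-valued random variable in $(\mathcal M_m(\mathcal A),id_m\otimes\phi)$, i.e.\ every $\mathcal M_m(\mathbb C)$-valued moment involving an odd number of $y$'s vanishes. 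Combined with $s\sim -s$ (a centered semicircular is symmetric) and the freeness of $s$ and $y$ over $\mathcal M_m(\mathbb C)$, one obtains $s+y\sim-(s+y)$. Hence the noncommutative Cauchy transforms $G_y$ and $G_{s+y}$ are odd functions on their natural domains, and in particular vanish at $0$ wherever they are defined.

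For the ``if'' direction, suppose $0\notin\mathrm{spect}(y)$ and $\mathrm{spect}(\eta\circ G_y'(0))\subset\overline{\mathbb D}\setminus\{1\}$. Oddness of $G_y$ gives $G_y(0)=0$, hence $H(0)=-\eta(G_y(0))=0$, and Proposition~\ref{2} applied with $w=0$ directly yields invertibility of $s+y=s+y-H(0)$.

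For the ``only if'' direction, suppose $s+y$ is invertible. The converse part of Proposition~\ref{2} applied with $b=0$ extends $\omega$ analytically to a neighborhood of $0$ (mapping selfadjoints to selfadjoints), gives that $y-\omega(0)$ is invertible, and the subordination identity $G_{s+y}(b)=G_y(\omega(b))$ persists under this extension. Since $G_{s+y}(0)=0$ by the oddness observation, we get $G_y(\omega(0))=0$, and the fixed-point relation forces $\omega(0)=0+\eta(G_y(\omega(0)))=0$; hence $y$ is invertible. For the spectral bound, the proof of Proposition~\ref{2} also delivers $\mathrm{spect}(\omega'(0))\subset\{\Re z\geq 1/2\}$. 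Writing $\eta\circ G'_y(0)=\mathrm{Id}-\omega'(0)^{-1}$ and applying the M\"obius map $\lambda\mapsto 1-1/\lambda$ (which sends the right half-plane $\{\Re\lambda\geq 1/2\}$ into $\overline{\mathbb D}$, with $1$ unattained because $0\notin\mathrm{spect}(\omega'(0))$) yields the required $\mathrm{spect}(\eta\circ G'_y(0))\subset\overline{\mathbb D}\setminus\{1\}$.

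The main obstacle is really the first, informal step: carefully justifying why entry-wise symmetry in $(\mathcal A,\phi)$ suffices for the full operator-valued symmetry $y\sim -y$ needed to conclude $G_y(0)=0$ and $G_{s+y}(0)=0$; this is the identity that drives the entire reduction to Proposition~\ref{2}. Once this symmetry is secured, the remainder is a clean specialization of Proposition~\ref{2} together with the subordination identity $G_{s+y}(b)=G_y(\omega(b))$.
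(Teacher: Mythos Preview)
Your proposal is correct and takes essentially the same route as the paper: both specialize Proposition~\ref{2} to $w=b=0$ and invoke the symmetry of $y$ (together with the centering of $s$) to force $H(0)=0$ and $\omega(0)=0$; the paper expresses this consequence by saying that $G_y$, $G_{s+y}$, $\omega$ preserve $i\mathcal{M}_m(\mathbb{C})^+$ (so their values at $0$ are simultaneously selfadjoint and skew-adjoint, hence zero), while you express it as oddness of the Cauchy transforms---these are equivalent formulations of the same symmetry. You go slightly further than the paper by spelling out the spectral inclusion $\mathrm{spect}(\eta\circ G'_y(0))\subset\overline{\mathbb D}\setminus\{1\}$ in the ``only if'' direction via the identity $\eta\circ G'_y(0)=\mathrm{Id}-\omega'(0)^{-1}$ and the M\"obius map, whereas the paper defers this to the remark following Proposition~\ref{2}; one small slip in your write-up is that $1$ is unattained by $\lambda\mapsto 1-1/\lambda$ simply because this map never equals $1$ for any finite $\lambda$, not because $0\notin\mathrm{spect}(\omega'(0))$ (the latter is what guarantees $\omega'(0)^{-1}$ exists in the first place).
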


\begin{proof}
Note that our hypotheses that all entries of the selfadjoint $y$ are symmetric and that $s$ is centered 
imply automatically that $H(i{\cal M}_m(\mathbb C)^+)\subseteq i{\cal M}_m(\mathbb C)^{\rm sa}$ and 
$\omega(i{\cal M}_m(\mathbb C)^+)\cup G_y(i{\cal M}_m(\mathbb C)^+)\cup G_{y+s}(i{\cal M}_m(\mathbb C)^+)\subseteq i{\cal M}_m(\mathbb C)^+.$ 

Assume that $y$ is invertible and $\text{spect}(\eta\circ G_y'(0))\subseteq\overline{\mathbb D}
\setminus\{1\}$. In particular, $G_y$ is analytic on a neighborhood of zero in ${\cal M}_m(\mathbb C)$.
Proposition \ref{2} implies that $s+y-H(0)$ is invertible. Since $H(i{\cal M}_m(\mathbb C)^+)
\subseteq i{\cal M}_m(\mathbb C)^+$, it follows from the formula of $H$ that $H(0)=0$. Thus, $s+y$ is
invertible.

Conversely, assume that $s+y$ is invertible, so that $G_{s+y}$ extends analytically to a
small neighborhood of zero in such a way that it maps selfadjoints to selfadjoints.
Since $\omega(b)=b+\eta(G_{s+y}(b))$, it follows that $\omega$ does the same. 
According to Proposition \ref{2}, $y-\omega(0)$ is invertible.
Since $\omega(i{\cal M}_m(\mathbb C)^+)\subseteq i{\cal M}_m(\mathbb C)^+$, we again have that
$\omega(0)=0$, so that $y$ is invertible.
\end{proof}
\section{Linearization trick}\label{Sec:linny}
A powerful tool to deal with noncommutative polynomials in random matrices or in operators is 
the so-called ``linearization trick.'' Its origins can be found in the theory of automata and formal 
languages (see, for instance, \cite{Sch}), where it was used to conveniently represent certain categories
of formal power series. In the context of operator algebras and random matrices, this procedure goes 
back to Haagerup and Thorbj{\o}rnsen \cite{HT05,HT06}   (see \cite{MS}).  We use the version from
\cite[Proposition 3]{A}, which has several advantages for our purposes, to be described below.\\
~~

We denote by $\mathbb C\langle X_1,\dots,X_k\rangle$ the complex $\ast$-algebra of polynomials in
$k$ noncommuting indeterminates $X_1,\dots,X_k$. The adjoint operation is given by the anti-linear 
extension of $(X_{i_1}X_{i_2}\cdots X_{i_l})^* = X_{i_l}^*\cdots X_{i_2}^*X_{i_1}^*$, $(i_1, \ldots, i_l)\in
\{1,\ldots,k\}^l, l\in \mathbb{N}\setminus\{0\}$. We will sometimes assume that some, or all, 
of the indeterminates are selfadjoint, i.e. $X_j^*=X_j$. Unless we make this assumption explicitly, the 
adjoints $X_1^*,\dots,X^*_k$ are assumed to be algebraically free from each other and from $X_1,
\dots,X_k$. 

Given a polynomial $P\in\mathbb C\langle X_1,\dots,X_k\rangle$, we call {\it linearization} of $P$ any $L_P\in {\cal M}_m(\mathbb{C}) \otimes \mathbb{C} \langle X_1,\ldots, X_k \rangle$  such that 
 $$L_P := \begin{pmatrix} 0 & u^*\\v & Q \end{pmatrix} \in {\cal M}_m(\mathbb{C}) \otimes \mathbb{C} \langle X_1,\ldots, X_k \rangle$$
where
\begin{enumerate}
\item $ m \in \mathbb{N}$,
\item $ Q \in {\cal M}_{m-1}(\mathbb{C})\otimes \mathbb{C} \langle X_1,\ldots, X_k \rangle$ is invertible in
the complex algebra ${\cal M}_{m-1}(\mathbb{C})\otimes \mathbb{C} \langle X_1,\ldots, X_k \rangle$,
\item $u^*$ is a row vector and $v$ is a column vector, both of length $m-1$, with
entries in $\mathbb{C} \langle X_1,\ldots, X_k \rangle$,
\item  the polynomial entries in $Q, u$ and $v$ all have degree $\leq 1$,\\
\item \label{linny}
$\hspace{4cm} {P=-u^*Q^{-1}v}.$
\end{enumerate}

We refer to Anderson's paper \cite{A} for the - constructive - proof of the existence of a linearization 
$L_P$ as described above for any given polynomial $P\in\mathbb C\langle X_1,\dots,X_k\rangle$. It 
turns out that if $P$ is selfadjoint, then $L_P$ can be chosen to be self-adjoint.\\
The well-known result about Schur complements yields then the following invertibility equivalence.  

\begin{lemma}\label{inversible}\cite[Chapter 10, Corollary 3]{MS}
Let $P\in\mathbb{C}\langle X_1,\dots,X_k\rangle$ and let $L_P \in {\cal M}_m(\mathbb{C}\langle X_1,
\dots,X_k\rangle)$ be a linearization of $P$ with the properties outlined above. Let $e_{11}$ be the 
$m\times m$ matrix whose single nonzero entry equals one and occurs in the row 1 and column 1. Let 
$y = (y_1,\dots, y_k)$ be a $k$-tuple of  operators in a unital ${\cal C}^*$-algebra ${\cal A}$. 
Then, for any $z\in \mathbb{C}$, $ze_{11}\otimes 1_{\cal A}-L_P(y)$ is invertible if and only if $z 1_{\cal 
A}-P(y)$ is invertible and  we have 
\begin{equation}\label{coin}
\left(ze_{11}\otimes 1_{{\cal A}} - 
L_P(y)\right)^{-1}=\begin{pmatrix}\left(z1_{\cal A}-P(y)\right)^{-1} & \star\\\star & \star \end{pmatrix}.
\end{equation}
\end{lemma}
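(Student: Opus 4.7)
The plan is a direct block Schur-complement computation. First, I will rewrite
\[
ze_{11}\otimes 1_{\mathcal A}-L_P(y)=\begin{pmatrix} z 1_{\mathcal A} & -u^*(y) \\ -v(y) & -Q(y) \end{pmatrix}
\]
as a $2\times 2$ matrix with entries in $\mathcal{M}_{m-1}({\cal A})$ and its appropriately sized row/column blocks. By the linearization hypothesis, $Q(y)$ is invertible in $\mathcal{M}_{m-1}({\cal A})$ (the formal invertibility of $Q$ over the free algebra descends to $\mathcal A$ via the substitution homomorphism $X_i\mapsto y_i$), and hence so is $-Q(y)$.

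Next, I will exhibit the block LDU factorization
\[
\begin{pmatrix} z 1_{\mathcal A} & -u^*(y) \\ -v(y) & -Q(y) \end{pmatrix} =
\begin{pmatrix} 1 & u^*(y) Q(y)^{-1} \\ 0 & 1 \end{pmatrix}
\begin{pmatrix} S & 0 \\ 0 & -Q(y) \end{pmatrix}
\begin{pmatrix} 1 & 0 \\ Q(y)^{-1} v(y) & 1 \end{pmatrix},
\]
and a direct multiplication shows the identity holds precisely when the Schur complement equals $S=z 1_{\mathcal A}+u^*(y) Q(y)^{-1} v(y)$. Invoking the defining identity $P=-u^*Q^{-1}v$ of the linearization, which upon substitution becomes $u^*(y)Q(y)^{-1}v(y)=-P(y)$, I identify $S=z 1_{\mathcal A}-P(y)$. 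Since the two outer triangular factors are unipotent and therefore invertible regardless of $z$ and $y$, the left-hand side is invertible if and only if the middle block-diagonal factor is; as $-Q(y)$ is already invertible, this equivalence reduces to the invertibility of $z 1_{\mathcal A}-P(y)$.

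For the explicit formula \eqref{coin}, I will invert the three factors in reverse order. The inverses of the outer unipotent matrices are obtained simply by flipping the sign of their off-diagonal entry, so computing
\[
\begin{pmatrix} 1 & 0 \\ -Q(y)^{-1}v(y) & 1 \end{pmatrix}\begin{pmatrix} (z1_{\mathcal A}-P(y))^{-1} & 0 \\ 0 & -Q(y)^{-1} \end{pmatrix}\begin{pmatrix} 1 & -u^*(y)Q(y)^{-1} \\ 0 & 1 \end{pmatrix}
\]
and reading off the $(1,1)$-block yields $(z 1_{\mathcal A}-P(y))^{-1}$, as claimed. There is no serious obstacle: the argument is purely algebraic bookkeeping. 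The only point worth flagging is the passage from the formal identity $P=-u^*Q^{-1}v$ in the free skew-field to its specialization at $y$, but this is justified once one knows $Q(y)$ is invertible in $\mathcal{M}_{m-1}({\cal A})$.
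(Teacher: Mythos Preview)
Your proof is correct and is exactly the Schur complement/block LDU factorization argument that the paper itself uses (the lemma is quoted from \cite{MS} without a standalone proof, but the identical factorization appears in the proof of Lemma~\ref{resHari} and again in Section~\ref{serbanlinearisation}). There is nothing to add.
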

\begin{lemma}\label{resHari} Let $P\in\mathbb{C}\langle X_1,\dots,X_k\rangle$ and let $L_P \in {\cal M}_m(\mathbb{C}\langle X_1,
\dots,X_k\rangle)$ be a linearization of $P$ with the properties outlined above.
There exist two polynomials $T_1$ and $T_2$  in $k$ commutative indeterminates, with nonnegative coefficients, depending only on $L_P$, such that, for any  
$k$-tuple $y = (y_1,\dots, y_k)$  of  operators in a unital ${\cal C}^*$-algebra ${\cal A}$, for any $z\in \C$ such that $z1_{\cal A}-P(y)$ is invertible,
\begin{eqnarray}
\left\|(ze_{11}\otimes 1_{\cal A}\!-\!L_P(y))^{-1}\right\|&\leq &T_1\!\left(\!\|y_1\|,\dots,\|y_k\|\!\right)
\times\left\|(z1_{\cal A}-P(y))^{-1}\right\| \nonumber
\\&&
+ T_2\left( \|y_1\|, \ldots, \|y_k\|\right).\label{Hari}\end{eqnarray}
\end{lemma}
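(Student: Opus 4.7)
The plan is to apply the Schur complement formula to the block decomposition of $ze_{11}\otimes 1_{\cal A}-L_P(y)$, compute each entry of the inverse explicitly, and estimate the four blocks separately. Writing
$$L_P(y)=\begin{pmatrix}0 & u^*(y)\\ v(y) & Q(y)\end{pmatrix},\qquad ze_{11}\otimes 1_{\cal A}-L_P(y)=\begin{pmatrix} z\,1_{\cal A} & -u^*(y)\\ -v(y) & -Q(y)\end{pmatrix},$$
Lemma \ref{inversible} guarantees that the right-hand side is invertible whenever $z1_{\cal A}-P(y)$ is. I then apply the Schur complement with respect to the $(2,2)$-block $-Q(y)$, which is invertible because, by definition of a linearization, $Q$ is invertible in ${\cal M}_{m-1}(\mathbb C)\otimes\mathbb C\langle X_1,\dots,X_k\rangle$. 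The Schur complement equals $z1_{\cal A}-(-u^*Q^{-1}v)(y)=z1_{\cal A}-P(y)$, so the four blocks of the inverse are
\begin{align*}
(1,1)&:\ (z1_{\cal A}-P(y))^{-1}, \\
(1,2)&:\ -(z1_{\cal A}-P(y))^{-1}u^*(y)Q(y)^{-1}, \\
(2,1)&:\ -Q(y)^{-1}v(y)(z1_{\cal A}-P(y))^{-1}, \\
(2,2)&:\ -Q(y)^{-1}+Q(y)^{-1}v(y)(z1_{\cal A}-P(y))^{-1}u^*(y)Q(y)^{-1}.
\end{align*}

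The crux of the bound is the following observation, already built into the definition of linearization: since $Q$ is invertible in ${\cal M}_{m-1}(\mathbb C)\otimes\mathbb C\langle X_1,\dots,X_k\rangle$, the inverse $Q^{-1}$ is itself a matrix whose entries are noncommutative polynomials in the variables $X_i$ (and their adjoints). Consequently $\|Q(y)^{-1}\|$ is controlled, uniformly in ${\cal A}$, by a polynomial with nonnegative coefficients in $\|y_1\|,\dots,\|y_k\|$ --- namely the one obtained by replacing each monomial $c\,X_{i_1}\cdots X_{i_r}$ occurring in the entries of $Q^{-1}$ by $|c|\,\|y_{i_1}\|\cdots\|y_{i_r}\|$, and then summing row (or Frobenius) norms. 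Similarly, since the entries of $u^*$ and $v$ are affine in the indeterminates, $\|u^*(y)\|$ and $\|v(y)\|$ are bounded by explicit affine polynomials in $\|y_1\|,\dots,\|y_k\|$ depending only on $L_P$.

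It then remains to bound $\|(ze_{11}\otimes 1_{\cal A}-L_P(y))^{-1}\|$ by a constant (depending only on $m$) times the sum of the operator norms of its four blocks. The three blocks $(1,1), (1,2), (2,1)$ together with the second summand of $(2,2)$ all carry a factor of $\|(z1_{\cal A}-P(y))^{-1}\|$; after factoring it out, the accumulated polynomial multipliers coming from $u, v, Q^{-1}$ give the polynomial $T_1(\|y_1\|,\dots,\|y_k\|)$ with nonnegative coefficients. The remaining term $-Q(y)^{-1}$ of the $(2,2)$-block contributes the additive polynomial $T_2(\|y_1\|,\dots,\|y_k\|)$, and both $T_1,T_2$ depend only on $L_P$. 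This yields \eqref{Hari}. The main obstacle here is purely notational bookkeeping: the only nonelementary input is the fact that $Q^{-1}$ lies in the polynomial algebra, which is definitional rather than analytic, so no convergence or invertibility subtleties intervene beyond those already supplied by Lemma \ref{inversible}.
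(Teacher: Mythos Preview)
Your proof is correct and is essentially the same as the paper's: the paper writes the inverse as the triple product
\[
\begin{bmatrix}1_{\cal A} & 0\\ -Q^{-1}v & I\end{bmatrix}\begin{bmatrix}(z-P)^{-1} & 0\\ 0 & -Q^{-1}\end{bmatrix}\begin{bmatrix}1_{\cal A} & -u^*Q^{-1}\\ 0 & I\end{bmatrix},
\]
which, when multiplied out, gives exactly the four Schur-complement blocks you wrote down, and the estimate then follows in the same way from $u^*,v,Q^{-1}$ being polynomials in the $y_i$.
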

\begin{proof}
The linearization of P can be written as 
$$
L_P=\begin{bmatrix}
0 & u^*\\
v & Q
\end{bmatrix}\in {\cal M}_m(\mathbb C\langle X_1,\dots,X_{k}\rangle)
$$
Now, a matrix calculation
in which we suppress the  variable $y$ shows that
\\

$
(ze_{11}\otimes 1_{\cal A} -L_P)^{-1}$ $$=
\begin{bmatrix}
1_{\cal A}  & 0\\
-Q^{-1}v & I_{(m-1)}\otimes 1_{\cal A}
\end{bmatrix}
\begin{bmatrix}
(z-P)^{-1}  & 0\\
0&-Q^{-1}
\end{bmatrix}
\begin{bmatrix}
1_{\cal A}  & -u^*Q^{-1}\\
0 & I_{(m-1)}\otimes 1_{\cal A}
\end{bmatrix}.
$$
Since $v$, $u^*$, and $Q^{-1}$ 
are polynomials in $  y_1,\ldots,y_k$, the result readily follows. \end{proof}
In Section \ref{serbanlinearisation}, we will provide an explicit construction of a linearization that is best
adapted to our purposes. In this construction, it is clear that we can always find a linearization such that, for any $k$-tuple $y$ of matrices,
\begin{equation}
\label{detQ}\det Q(y)=\pm 1.
\end{equation}

\section{No outlier; proof of Theorem \ref{inclusion}}\label{pp} 
By Bai-Yin's theorem (see \cite[Theorem 5.8]{BS10}), there exists $C>0$ such that, almost surely for all 
large $N$, $\|M_N\|\leq C$, so that for  the first assertion of Theorem \ref{inclusion} readily yields the 
second one, by choosing 
$$
\Gamma=\{z\in\mathbb C,d(z,\text{spect}(P(c,a)))\ge\epsilon, |z|\le C\}.
$$
Remember that, by \eqref{H2equivalent},
$\text{spect}(P({c}, {a}))=\{z\in\mathbb C\colon0 \in \text{supp}(\mu_z)\},$ where $\mu_z$ is the distribution of $(P(c,a)-z)(P(c,a)-z)^*$.
The first assertion of Theorem \ref{inclusion} is equivalent to the following.
\begin{proposition}\label{nonunif} Let $\Gamma$ be a compact set of 
$\{z, 0 \notin \text{supp}(\mu_z)\}$; assume that   for any $z$ in $\Gamma$,   there exists $\eta_z>0$ such that   for all $N$ large enough, $$s_N\left(P( 0_N, \ldots,0_N,  (A_N^{(1)}), \ldots, (A_N^{(t)}))-zI_N\right)>\eta_z.$$ 
Then,  for any $z$ in $\Gamma$,  there exists $\gamma_z>0$, such that almost surely, for all large $N$,  $s_N(M_N-zI_N) \geq \gamma_z$. 
Consequently,  there
exists 
$\gamma_\Gamma > 0$ such that almost surely, for all large $N$,  $\inf_{z\in \Gamma} s_N(M_N-zI_N) \geq \gamma_\Gamma$. \end{proposition}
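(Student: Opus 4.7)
The plan is to reduce via the linearization trick of Section \ref{Sec:linny} to a lower bound on the smallest singular value of an affine pencil, and then to extract that bound from an operator-valued strong convergence / no-outlier input for an associated Hermitian random matrix. Fix $z\in\Gamma$ and let $L_P$ be a linearization of $P$. By Lemma \ref{inversible} and the block identity \eqref{coin},
$$
s_N(M_N-zI_N)\;\ge\;s_{mN}\bigl(ze_{11}\otimes I_N-L_P(X_N^{(1)}/\sqrt N,\dots,X_N^{(u)}/\sqrt N,A_N^{(1)},\dots,A_N^{(t)})\bigr),
$$
so it suffices to bound the right-hand side from below. Since $L_P$ has degree at most one, write
$$
ze_{11}\otimes I_N-L_P(X_N/\sqrt N,A_N)=B_N^{(0)}(z)-Z_N,
$$
with $B_N^{(0)}(z):=ze_{11}\otimes I_N-L_P(0,A_N)$ deterministic and $Z_N$ a linear combination of the $X_N^{(v)}/\sqrt N$ and their adjoints with matrix coefficients in $\M_m(\C)$. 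Combining the hypothesis $s_N(zI_N-P(0,A_N))>\eta_z$ with Lemma \ref{resHari} applied at $y=(0,A_N)$ and the uniform bound \eqref{normedeAprime} produces a constant $C_z>0$ such that $\|B_N^{(0)}(z)^{-1}\|\le C_z$ for all large $N$.

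Next, consider the selfadjoint dilation
$$
H_N(z):=\begin{pmatrix}0 & B_N^{(0)}(z)-Z_N\\ (B_N^{(0)}(z)-Z_N)^*&0\end{pmatrix},
$$
whose spectrum is symmetric about $0$ and whose smallest absolute eigenvalue equals $s_{mN}(B_N^{(0)}(z)-Z_N)$. In the limit $\mathcal C^*$-probability space, set $B_\infty(z):=ze_{11}\otimes 1-L_P(c,a)$. By Lemma \ref{inversible} and the assumption $0\notin\mathrm{supp}(\mu_z)$ (equivalently $z\notin\mathrm{spect}(P(c,a))$ via \eqref{H2equivalent}), $B_\infty(z)$ is invertible, so the corresponding Hermitian dilation $H_\infty(z)$ has a spectral gap $(-\delta,\delta)$ about $0$ for some $\delta=\delta(z)>0$. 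The crucial step is to transfer this gap to $H_N(z)$: using Lemmas \ref{transfertliberte} and \ref{symm} to symmetrize the i.i.d.\ entries into an equidistributed Hermitian semicircular-type model, and then invoking the operator-valued semicircular invertibility criterion of Lemma \ref{Serban} (itself a consequence of Proposition \ref{2}), one shows that almost surely for all large $N$, $\mathrm{spect}(H_N(z))\cap(-\delta/2,\delta/2)=\emptyset$. This yields the pointwise conclusion $s_N(M_N-zI_N)\ge\gamma_z:=\delta/2$.

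To upgrade to the uniform statement, use that $z\mapsto s_N(M_N-zI_N)$ is $1$-Lipschitz together with an almost sure uniform bound on $\|M_N\|$ of Bai--Yin type: covering the compact $\Gamma$ by finitely many small balls and applying the pointwise bound at each centre produces $\gamma_\Gamma>0$ with $\inf_{z\in\Gamma}s_N(M_N-zI_N)\ge\gamma_\Gamma$ almost surely for all large $N$. The principal obstacle is the spectral-gap step for $H_N(z)$: the Hermitian dilation nontrivially couples the non-Hermitian i.i.d.\ matrices with the deterministic $A_N$ through the matrix coefficients of the linearization, so translating invertibility of $B_\infty(z)$ into an almost sure asymptotic eigenvalue gap for $H_N(z)$ demands genuinely operator-valued input beyond the scalar strong convergence of \cite{HT05}; the symmetrization Lemma \ref{symm} is precisely what lets the existing operator-valued semicircular machinery (Lemma \ref{Serban}, Proposition \ref{2}) handle our i.i.d.\ matrices under the mild hypothesis $(X1)$.
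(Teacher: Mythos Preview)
Your reduction via the linearization and Hermitian dilation is sound, and the uniform-in-$z$ upgrade by the $1$-Lipschitz property and compactness matches the paper. The substantive gap is in your ``crucial step,'' where you assert that Lemmas \ref{transfertliberte}, \ref{symm} and \ref{Serban} together yield $\text{spect}(H_N(z))\cap(-\delta/2,\delta/2)=\varnothing$ almost surely for large $N$. Those three lemmas are statements \emph{in the limiting $\mathcal C^*$-probability space}: Lemma \ref{symm} compares $|\cdot|^2$ of two operators built from circulars and semicirculars, and Lemma \ref{Serban} gives an invertibility criterion for $\mathcal S+\mathcal Y$ in terms of $\mathcal Y$ and $\eta\circ G_{\mathcal Y}'(0)$. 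None of them makes any assertion about eigenvalues of a finite-$N$ random matrix, so by themselves they cannot manufacture the spectral gap for $H_N(z)$.

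In the paper, this step is split into two genuinely different ingredients. First, Proposition \ref{transfert} (whose proof is where Lemmas \ref{symm} and \ref{Serban} are actually used) shows that the \emph{deterministic equivalent} $\mu_{N,z}$---the distribution of $|P(c,a_N)-z|^2$ with $a_N$ having the same $\ast$-moments as $(A_N^{(1)},\dots,A_N^{(t)})$---keeps $0$ uniformly outside its support; this is where the hypothesis on $s_N(P(0,A_N)-zI_N)$ enters, and it is needed because the relevant limiting object depends on $a_N$, not on $a$. Second, Proposition \ref{pasde} supplies the missing random-matrix input: it rewrites $K_NK_N^*$ as a Hermitian polynomial in $2N\times 2N$ Wigner matrices and deterministic matrices and then invokes the strong no-outlier theorem of \cite{BC} to exclude eigenvalues outside the support of $\tau_N=\mu_{N,z}$. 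Your sketch collapses these two steps into one and omits any appeal to a Wigner/strong-convergence result of the \cite{BC} type; without such a result (or a substitute), the passage from invertibility of $B_\infty(z)$ to an almost sure eigenvalue gap for $H_N(z)$ is unjustified.
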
 

\subsection{Ideas of the proof}
The proof of Proposition \ref{nonunif} is based on the two following key results.

\begin{proposition}\label{pasde} Assume that ${\bf (X1)}$  holds. 
Let K be a polynomial in  $u +t$ noncommutative variables. Define
$$K_N=K\left( \frac{X_N^{(1)}}{\sqrt{N}}, \ldots, \frac{X_N^{(u)}}{\sqrt{N}},  A_N^{(1)}, \ldots, A_N^{(t)}\right).$$
\begin{itemize}  
 
\item Assume  that \eqref{normedeA} holds.
Let $\{{a}_N^{(j)}, j=1,\dots,t\}$ be a set of noncommutative random variables in $\left({\cal A}, 
\phi\right)$ which is free from a free circular system $c=(c^{(1)},\dots,c^{(u)})$ in $\left({\cal A}, 
\phi\right)$ and such that  the $\ast$-distribution of $({A}_N^{(j)}\!,j\!=\!1,\dots,t)$ in the 
noncommutative probability space $\left({\cal M}_{N}(\C),\frac{1}{N} \Tr\right)$ coincides with the 
$\ast$-distribution of $a_N=({ a}_N^{(j)}, j=1,\ldots,t)$ in $\left({\cal A},\phi\right).$ Let $\tau_N$ be 
the the distribution of 
$$
K(c, a_N)\left[K( c, a_N)\right]^*
$$ 
with respect to $\phi$. If $[x,y]$, $x<y$, is such that there exists a $\delta>0$ such that for all large 
$N$, $(x-\delta; y+\delta)\subset\mathbb{R}\setminus\rm{supp}(\tau_N)$, then, we have
$$
\mathbb P[\mbox{for all large N},{\rm spect}(K_NK_N^*)\subset\mathbb{R}\setminus[x,y]]=1.
$$
\item Assume  that {\bf (A1)} holds.
Then, almost surely, the sequence of  $u+t$-tuples $\left( \frac{X_N^{(1)}}{\sqrt{N}}, \ldots, \frac{X_N^{(u)}}{\sqrt{N}},  A_N^{(1)}, \ldots, A_N^{(t)}\right)_{N\geq 1}$ converges in $\ast$-distribution towards $(c,a)$
 where $c=(c_1,\ldots, c_u)$ is a free circular system which is free with $a=(a^{(1)}, \ldots, a^{(t)})$ in $\left(  {\cal A},  \phi\right)$.
\end{itemize}
\end{proposition}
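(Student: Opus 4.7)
The two assertions are of rather different natures and I would treat them separately.

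For the second (classical asymptotic freeness) assertion, the plan is the standard moments approach. A Wick-like expansion of $\mathbb{E}[\tr_N(w)]$ for an arbitrary $\ast$-monomial $w$ in the entries of $X_N^{(v)}/\sqrt N$ and the $A_N^{(k)}$ shows, via \textbf{(X1)}, that only pairings of the $X$-indices survive as $N\to\infty$. Each surviving pairing produces a product of normalized traces of monomials in the $A_N^{(k)}$, whose limits are prescribed by \textbf{(A1)}. The pairing combinatorics is precisely that which encodes the freeness of a free circular system from $a$ (Voiculescu in the Gaussian case, Dykema in the general i.i.d.\ case), so the expected mixed $\ast$-moments converge to those of $(c,a)$. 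To upgrade to almost sure convergence, I would bound the variance of $\tr_N(w)$ by $O(N^{-2})$ via an Efron-Stein or martingale-difference argument using the finite fourth moment assumption in \textbf{(X1)}, and apply Borel-Cantelli.

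For the first assertion, I would invoke the linearization trick of Section~\ref{Sec:linny}. Fix a linearization $L_K$ of $K$; by Lemma~\ref{inversible}, the invertibility of $wI - K(y)$ is equivalent to that of $we_{11}\otimes 1 - L_K(y)$, with quantitative norm transfer provided by Lemma~\ref{resHari}. Through a standard Hermitization of $K - wI$, the hypothesis $(x-\delta, y+\delta)\subset \mathbb R\setminus \text{supp}(\tau_N)$, uniform in $N$, translates into a uniform-in-$N$ lower bound on the smallest singular value of an $\mathcal{M}_m(\mathbb C)\otimes \mathcal A$-valued linear pencil built from $(c, a_N)$, for $w^2$ in a small neighborhood of $[x,y]$. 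The first assertion is thus reduced to transferring this uniform lower bound, almost surely, to the random matrix pencil obtained by replacing $(c, a_N)$ with $\bigl(X_N^{(v)}/\sqrt N,\, A_N^{(k)}\bigr)$.

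This transfer is the central technical step and the main obstacle. It is a ``strong convergence'' statement in the spirit of Haagerup-Thorbj{\o}rnsen, adapted to i.i.d.\ entries with finite fourth moment together with deterministic matrices converging in $\ast$-distribution. I would proceed in two stages. First, using an operator-valued subordination argument on the Hermitization of the matrix pencil (built on the Stieltjes-transform machinery of Section~\ref{ovfp}), I would produce a deterministic equivalent for its resolvent; the uniform gap hypothesis supplies precisely the a priori bound on the resolvent of the limiting operator-valued semicircular pencil needed to close the fixed-point argument, in the spirit of Proposition~\ref{2} and Lemma~\ref{Serban}. Second, I would control the fluctuation of the random resolvent around this deterministic equivalent via a concentration inequality in the entries of the $X_N^{(v)}$, exploiting the finite fourth moment assumption of \textbf{(X1)}; Borel-Cantelli then promotes the probabilistic estimate to almost sure, yielding $\text{spect}(K_N K_N^*)\subset \mathbb R\setminus [x,y]$ for all large $N$.
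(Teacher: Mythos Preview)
Your treatment of the second assertion via a direct moments-plus-variance argument is standard and would work; the paper instead reduces both assertions simultaneously to the Wigner setting and invokes \cite[Proposition~2.2 and Remark~4]{BC} as a black box.

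For the first assertion your route is genuinely different from the paper's and misses its main simplification. You propose to linearize $K$, Hermitize, and then establish a Haagerup--Thorbj{\o}rnsen-type exact-separation statement for the resulting pencil directly in the i.i.d.\ setting, via operator-valued subordination plus concentration. That is plausible in outline, but the step you correctly flag as ``the central technical step and the main obstacle'' amounts to reproving the exact-separation theorem of \cite{BC} from scratch for non-Hermitian i.i.d.\ matrices; the tools you cite (Proposition~\ref{2}, Lemma~\ref{Serban}) concern operator-valued \emph{semicirculars}, and your proposal does not explain how the i.i.d.\ matrices, whose limits are circular rather than semicircular, are brought into that framework.

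The paper sidesteps all of this by a purely algebraic reduction to Wigner matrices. The key observation is that in the $2N\times 2N$ Hermitization one can write
\[
\begin{pmatrix} 0 & X_N^{(i)}/\sqrt N \\ 0 & 0\end{pmatrix}=\sqrt{2}\,Q_1\,\frac{\mathcal W^{(i)}}{\sqrt{2N}}\,Q_2,
\]
with $\mathcal W^{(i)}$ a genuine $2N\times 2N$ Wigner matrix and $Q_1,Q_2$ fixed block projectors. A monomial-by-monomial rewriting (equations \eqref{1}--\eqref{3}) then expresses $\begin{pmatrix} 0 & K_N\\ K_N^* & 0\end{pmatrix}$ as a selfadjoint polynomial $\hat K$ in independent Wigner matrices and deterministic matrices. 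On the free side, Lemma~\ref{transfertliberte} shows that the $2\times2$ operator matrices $\mathbf s_i$ built from $s_i^{(1)},s_i^{(2)},c_i$ are scalar-valued free semicirculars, free from the deterministic block, so the same polynomial $\hat K$ evaluated there reproduces the Hermitization of $K(c,a_N)$. Both assertions then follow immediately from \cite[Theorem~1.1, Proposition~2.2, Remark~4]{BC}. In short, the paper's proof is a reduction lemma plus a citation; your proposal is self-contained but substantially longer, and its hardest step is exactly the one the paper avoids.
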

\begin{proposition}\label{transfert}
Consider a polynomial $P({Y}_1,{Y}_2)$, where ${Y}_1$ is a tuple of noncommuting 
nonselfadjoint indeterminates, ${Y}_2$ is a tuple of selfadjoint indeterminates, and no selfadjointness 
is assumed for $P$. We evaluate $P$ in $(c,a)$ and $(c,a_N)$, where $c$ is a tuple of free circulars, which 
is $*$-free from the tuples $a$ and $a_N$. We assume that $a_N\to a$ in moments and that there 
exists a $\tau>0$ such that $\sup_N\| a_N\| \leq \tau$.
\begin{enumerate}

\item We fix $z_0\in\mathbb C$ such that $|P(c,a)-z_0|^2\ge\delta_{z_0}>0$ for a fixed 
$\delta_{z_0}$.
\item We assume that there exists $N_{\delta_{z_0}}\in\mathbb N$ such that if $N\ge N_{\delta_{z_0}}$,
then $|P(0,a_N)-z_0|^2\ge\delta_{z_0}$.
\end{enumerate}
Then,  there exists $\epsilon_{z_0}>0$ for which there exists an $N_{\epsilon_{z_0}}\in
\mathbb N$ such that if $N\ge N_{\epsilon_{z_0}}$, then $|P(c,a_N)-z_0|^2\ge\epsilon_{z_0}$.
\end{proposition}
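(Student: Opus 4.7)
My plan is to reduce the question to an invertibility statement for a self-adjoint operator of the form ``operator-valued semicircular plus symmetric element'', to which Lemma \ref{Serban} applies. The reduction chain is: linearize $P$, symmetrize via Lemma \ref{symm}, and hermitize. The conditions of Lemma \ref{Serban} at $a_N$ will be obtained by applying the same chain at $a$ (which gives the conditions at $a$) and then transferring them to $a_N$ by a continuity argument based on the convergence $a_N\to a$.

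Let $L_P\in\mathcal M_m(\mathbb C\langle Y_1,Y_2\rangle)$ be a linearization of $P$ as in Section \ref{Sec:linny}. Via Lemma \ref{inversible} and the compression identity \eqref{coin}, it suffices to produce a uniform upper bound on $\|K_N^{-1}\|$, where $K_N:=z_0e_{11}-L_P(c,a_N)$. Since $P$ is a polynomial (not a $*$-polynomial) in $Y_1,Y_2$, we can write $K_N=y_N-\sum_{i=1}^u\alpha_i\otimes c_i$ with $y_N:=z_0e_{11}-L_P(0,a_N)\in\mathcal M_m(\mathcal A)$ and $\alpha_i\in\mathcal M_m(\mathbb C)$. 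By Lemma \ref{symm}, $|K_N|^2$ has the same $\mathcal M_m(\mathbb C)$-valued distribution as $|\tilde K_N|^2$, where
$$\tilde K_N=(I_m\otimes\epsilon)\,y_N-\sum_{i=1}^u\alpha_i\otimes s_i,$$
for a $\pm 1$-Bernoulli $\epsilon$ and free standard semicirculars $s_1,\dots,s_u$ that are $*$-free from the entries of $y_N$. Hermitizing yields
$$\mathcal H(\tilde K_N):=\begin{pmatrix}0&\tilde K_N\\ \tilde K_N^*&0\end{pmatrix}=Y_N-S,$$
where $Y_N:=\mathcal H((I_m\otimes\epsilon)y_N)$ has entries that are each either zero or $\epsilon$ multiplied by a polynomial in the $a_N^{(k)}$'s---hence symmetric in $(\mathcal A,\phi)$---and $S:=\sum_i\hat\alpha_i\otimes s_i$ with $\hat\alpha_i=\begin{pmatrix}0&\alpha_i\\ \alpha_i^*&0\end{pmatrix}$ is a centered $\mathcal M_{2m}(\mathbb C)$-valued semicircular of variance $\eta(b)=\sum_i\hat\alpha_i b\hat\alpha_i$. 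The freeness of $\{s_i\}$ from $\{\epsilon,a_N^{(k)}\}$ lifts to freeness of $S$ from $Y_N$ over $\mathcal M_{2m}(\mathbb C)$ (standard matrix-level freeness, in the spirit of Lemma \ref{transfertliberte}), so Lemma \ref{Serban} applies.

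By Lemma \ref{Serban}, $\mathcal H(\tilde K_N)$ is invertible iff $0\notin\mathrm{spect}(Y_N)$ and $\mathrm{spect}(\eta\circ G'_{Y_N}(0))\subseteq\overline{\mathbb D}\setminus\{1\}$. The first condition follows from assumption (2) via Lemma \ref{inversible} (the factor $\epsilon$ is invertible). For the second, I run the same reduction chain with $a$ in place of $a_N$: the hypothesis $|P(c,a)-z_0|^2\geq\delta_{z_0}$ yields the invertibility of $\mathcal H(\tilde K(c,a))=Y(a)-S$, and $|P(0,a)-z_0|^2\geq\delta_{z_0}$ (which follows from (2) by passing to the limit using $a_N\to a$ in $*$-distribution with uniform norm bound) yields invertibility of $Y(a)$; Lemma \ref{Serban} together with Remark \ref{meh} then gives $\mathrm{spect}(\eta\circ G'_{Y(a)}(0))\subseteq r\overline{\mathbb D}$ for some $r<1$. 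Since $a_N\to a$ in $*$-distribution, $\sup_N\|a_N\|\leq\tau$, and $\|Y_N^{-1}\|$ is uniformly bounded from assumption (2), continuous functional calculus gives $Y_N^{-1}\to Y(a)^{-1}$ in $*$-moments, hence $G'_{Y_N}(0)\to G'_{Y(a)}(0)$ in the finite-dimensional space $\mathrm{End}(\mathcal M_{2m}(\mathbb C))$; upper semicontinuity of the spectral radius then yields $\mathrm{spect}(\eta\circ G'_{Y_N}(0))\subseteq r'\overline{\mathbb D}$ for some $r<r'<1$ and all $N$ large enough, verifying the second condition of Lemma \ref{Serban} for $Y_N$.

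The main obstacle is to upgrade invertibility into a uniform (in $N$) bound $\|\mathcal H(\tilde K_N)^{-1}\|\leq C$. This requires showing that the subordination functions $\omega, H$ from the proof of Proposition \ref{2} at $Y_N$ converge, with uniform continuity constants, to those at $Y(a)$; the uniform spectral gap $1-r'$ obtained above feeds into the implicit function theorem as used in that proof, giving uniform analytic control of $\omega$ near the origin and hence of $\|\mathcal H(\tilde K_N)^{-1}\|$. Once this is established, the chain $\|(P(c,a_N)-z_0)^{-1}\|\leq\|K_N^{-1}\|=\|\tilde K_N^{-1}\|=\|\mathcal H(\tilde K_N)^{-1}\|$---where the first inequality uses \eqref{coin}, the second uses Lemma \ref{symm} (which preserves the spectrum of $KK^*$), and the last uses the block-off-diagonal structure of $\mathcal H$---yields $|P(c,a_N)-z_0|^2\geq\epsilon_{z_0}:=1/C^2>0$ for all $N\geq N_{\epsilon_{z_0}}$, as desired.
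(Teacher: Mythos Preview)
Your proof is correct and follows essentially the same route as the paper: linearize, apply Lemma \ref{symm} to trade circulars for semicirculars at the price of an $\epsilon$-symmetrization, hermitize, and invoke Lemma \ref{Serban}; then transfer the spectral condition from $a$ to $a_N$ via moment convergence in the finite-dimensional space $\mathrm{End}(\mathcal M_{2m}(\mathbb C))$. Two small points: the passage from assumption (2) to a uniform bound on $\|Y_N^{-1}\|$ needs Lemma \ref{resHari} (not just Lemma \ref{inversible}), and your ``main obstacle'' paragraph is a sketch rather than a proof---the paper closes this quantitatively by using Proposition \ref{2} to identify the edge of the support of $\mathcal S+Y_N$ near zero as the first $x_N>0$ with $1\in\mathrm{spect}(\eta\circ G'_{Y_N}(x_N))$, and then bounding $x_N$ away from zero via the uniform convergence $G'_{Y_N}\to G'_{Y(a)}$ on a fixed real interval, which is cleaner than threading uniform constants through the implicit function theorem.
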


\begin{remark}
Of course Proposition \ref{transfert} still holds dealing with nonselfadjoint tuples $a_N$ by considering the selfadjoint tuples $(\Im (a_N), \Re (a_N))$.
\end{remark}

Let us explain  how to deduce Theorem \ref{outlier} from Proposition \ref{pasde}  and  Proposition \ref{transfert}.

\noindent Define $\mu_{N,z}$ as the distribution of $$\left[P( c^{(1)}, \ldots, c^{(u)},  a_N^{(1)}, \ldots, a_N^{(t)}) -z1\right]\times\left[P( c^{(1)}, \ldots, c^{(u)},  a_N^{(1)}, \ldots, a_N^{(t)}) -z1\right]^*$$ where
$ \{ c^{(1)},  (c^{(1)})^*\}, \ldots, \{c^{(u)},(c^{(u)})^*\},\{  a_N^{(1)}, \ldots, a_N^{(t)}\}$ are free sets of noncommutative random variables
 and the $\ast$-distribution of   $(a_N^{(1)}, \ldots, a_N^{(t)})$ in $({\cal A}, \phi)$ coincide with the  $\ast$-distribution of   $(A_N^{(1)}, \ldots, A_N^{(t)})$ in $({\cal M}_N(\mathbb{C}),\tr_N)$.
$\mu_{N,z}$ is the so-called deterministic equivalent measure of the empirical spectral measure of $(M_N-zI_N)(M_N-zI_N)^*$.\\
The following is a straightforward consequence of Proposition \ref{transfert}.

\begin{corollary}\label{decolle} 
Let $z \in \mathbb{C}$ be such that $ 0 \notin \text{supp}(\mu_z)$; assume that   there exists $\eta_z>0$ such that   for all $N$ large enough, 
there is no singular value of 
$$P( 0_N, \ldots,0_N,  (A_N^{(1)}), \ldots, (A_N^{(t)}))-zI_N$$ in $[0,\eta_z]$.
Then,
there exists $\epsilon_z>0$, such that, for all large $N$, $$[0, \epsilon_z] \subset \mathbb{R}\setminus \text{supp}(  \mu_{N,z}).$$
\end{corollary}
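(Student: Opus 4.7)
The plan is to invoke Proposition \ref{transfert} with $z_0 = z$, so I need to verify its two hypotheses (1) and (2) and check that the ambient assumptions ($a_N\to a$ in moments, $\sup_N\|a_N\|<\infty$) are in force. The latter follow from $\mathbf{(A1)}$ together with the fact that $(a_N^{(k)})_k$ and $(A_N^{(k)})_k$ share $*$-distributions, which transfers norm bounds (for a faithful tracial state, the operator norm is determined by the $*$-moments).

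For hypothesis (1), I will use that $\mu_z$ is the distribution of the positive element $(P(c,a)-z)(P(c,a)-z)^*$ in a $C^*$-probability space with faithful state, hence its support coincides with the spectrum. The assumption $0\notin\text{supp}(\mu_z)$ then gives some $\delta_z>0$ with $|P(c,a)-z|^2\geq\delta_z\cdot 1$ in $\mathcal{A}$.

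For hypothesis (2), the matrix-level singular-value bound must be promoted to a $C^*$-level bound on $|P(0,a_N)-z|^2$. The key observation is that because $(a_N^{(k)})_k$ and $(A_N^{(k)})_k$ have the same $*$-distribution, the distribution (as a probability measure on $\mathbb{R}$) of the positive element $(P(0,a_N)-z)(P(0,a_N)-z)^*$ in $(\mathcal{A},\phi)$ equals the eigenvalue distribution of the $N\times N$ positive matrix $(P(0,A_N)-zI_N)(P(0,A_N)-zI_N)^*$. The singular-value assumption forces the support of the latter into $(\eta_z^2,\infty)$; faithfulness of $\phi$ then upgrades this to the operator inequality $|P(0,a_N)-z|^2\geq\eta_z^2\cdot 1$ for all large $N$.

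With $\delta_{z_0}:=\min(\delta_z,\eta_z^2)$, Proposition \ref{transfert} now produces an $\epsilon_z>0$ and $N_{\epsilon_z}$ such that $|P(c,a_N)-z|^2\geq\epsilon_z\cdot 1$ whenever $N\geq N_{\epsilon_z}$. Invoking faithfulness of $\phi$ one last time, this $C^*$-algebraic bound is equivalent to $\text{spect}((P(c,a_N)-z)(P(c,a_N)-z)^*)\subset[\epsilon_z,\infty)$, i.e.\ $[0,\epsilon_z]\subset\mathbb{R}\setminus\text{supp}(\mu_{N,z})$, which is the claim. The only delicate point in the whole argument is the passage between matrix spectral data and the $C^*$-spectrum of the abstract element $P(0,a_N)-z$; once the coincidence of $*$-distributions and faithfulness of $\phi$ are exploited, this is routine, consistent with the corollary being advertised as a straightforward consequence of Proposition \ref{transfert}.
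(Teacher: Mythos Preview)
Your proof is correct and follows exactly the route the paper intends: the paper simply declares the corollary ``a straightforward consequence of Proposition \ref{transfert}'', and you have supplied precisely the missing translation steps (faithfulness of $\phi$ to pass between support-of-distribution and spectrum, and the coincidence of $*$-distributions to transfer the singular-value bound from $A_N$ to $a_N$). The only cosmetic point is that from $|P(c,a_N)-z|^2\ge\epsilon_z$ you get $\text{supp}(\mu_{N,z})\subset[\epsilon_z,\infty)$, so strictly speaking you should shrink $\epsilon_z$ (say to $\epsilon_z/2$) to have the \emph{closed} interval $[0,\epsilon_z]$ disjoint from the support; also note (cf.\ the Remark following Proposition \ref{transfert}) that the $A_N^{(k)}$ need not be selfadjoint, which is handled by passing to real and imaginary parts.
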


Then, we can deduce from Corollary \ref{decolle} and Proposition \ref{pasde} that there exists some 
$\gamma_z>0$ such that almost surely for all large $N$, there is no singular value of $M_N-zI_N$ in 
$[0,\gamma_z]$.\\ By a compacity argument and the fact that $z\mapsto s_N(M_N-zI)$ is 1-Lipschitz,
it readily follows that for any compact $\Gamma\subset\{z\colon0\notin\text{supp}(\mu_z)\}$, there 
exists some $\gamma_\Gamma >0$ such that almost surely for all large $N$,
\begin{equation}\label{inf}
\inf_{z\in \gamma} s_N(M_N-zI_N)\geq \gamma_\Gamma,
\end{equation}
leading to Proposition \ref{nonunif}.

\subsection{Proof of Proposition \ref{pasde} }

Note that 
$$
\begin{pmatrix}K_NK_N^*&0\\0&K_N^*K_N\end{pmatrix}=\begin{pmatrix}0&K_N\\K_N^*&0 \end{pmatrix}^2,
$$
so that the spectrum of $K_NK_N^*$ coincides with the spectrum of 
${\begin{pmatrix} 0 & K_N\\ K_N^* & 0 \end{pmatrix}}^2$. Now \begin{eqnarray}
\begin{pmatrix} 0 & K\\ K^* & 0 \end{pmatrix} &=&\sum_{i=1}^p \begin{pmatrix} 0 & b_i m_i\\ \bar b_i m_i^* & 0 \end{pmatrix} \nonumber\\
&=&  \sum_{i=1}^p b_i\begin{pmatrix} 0 &  m_i\\ m_i^* & 0 \end{pmatrix} \begin{pmatrix} 0 &0\\0 &1 \end{pmatrix} + \bar b_i \begin{pmatrix} 0 &0\\0 &1 \end{pmatrix} \begin{pmatrix} 0 &  m_i\\ m_i^* & 0 \end{pmatrix}\label{1}
\end{eqnarray}
where the $m_i$'s are monomials and the $b_i$'s are complex numbers. 
Define    
$Q_1= \begin{pmatrix}  I_N & 0\\ 0 & 0\end{pmatrix}$, $Q_2= \begin{pmatrix}  0 & 0\\ 0 & I_N\end{pmatrix}$ and $R= \begin{pmatrix}  0 & I_N\\ 0 & 0\end{pmatrix}$, $S=\begin{pmatrix}  0 & I_N\\ I_N & 0\end{pmatrix}$.
Note that 
$$\begin{pmatrix}  0 & \frac{X_N^{(i)}}{\sqrt{N}}\\ 0 & 0\end{pmatrix}= \sqrt{2}Q_1 \frac{{\cal W}^{(i)}}{\sqrt{2N}} Q_2  $$ where the ${\cal W}^{(i)}$'s, $i=1,\ldots, u$,  are $2N\times 2N$ independent standard  Wigner matrices.
Now, note that as noticed by \cite{BSS} for any monomial $x_1 \cdots x_k$, 
\begin{equation}\label{2bis}\begin{pmatrix}  0 & x_1 \cdots x_k \\ (x_1 \cdots x_k)^*& 0\end{pmatrix}=\Pi_{k-1}  \begin{pmatrix} 0 & x_k\\ x_k^* & 0\end{pmatrix} \Pi_{k-1}^*\end{equation}
where $$\Pi_{k-1}=
  \begin{pmatrix} 0 & x_1\\ I& 0\end{pmatrix}S \begin{pmatrix}  0 & x_2\\ I & 0\end{pmatrix} S\cdots  S  \begin{pmatrix} 0 & x_{k-1}\\ I& 0\end{pmatrix} S.$$
Indeed, this can be proved by induction noting that 
$$ \begin{pmatrix} 0 & x_1\\ I & 0\end{pmatrix}  S \begin{pmatrix} 0 & x_2\\ x_2^* & 0\end{pmatrix} S  \begin{pmatrix} 0 &I\\  x_1^* & 0\end{pmatrix} 
= \begin{pmatrix} 0 & x_1x_2\\ x_2^*x_1^* & 0\end{pmatrix}.$$
Note also that \begin{equation}\label{3} S \begin{pmatrix} 0 & I\\ x_1^* & 0\end{pmatrix}S= \begin{pmatrix} 0 & x_1^*\\ I & 0\end{pmatrix}.\end{equation}
Set  for  j=1,\ldots, t, ${\bf A}_N^{(j)}=  \begin{pmatrix} 0 & A_N^{(j)} \\ 0 & 0\end{pmatrix}$.
\\
From \eqref{1}, \eqref{2bis}, \eqref{3}, it readily follows that there exists a polynomial $\hat K$ such that 
$\begin{pmatrix} 0 & K_N\\ K_N^* & 0 \end{pmatrix}$ is equal to $$ \hat K \left(Q_1,Q_2, R, R^*,  {\bf A}_N^{(j)}, ({\bf A}_N^{(j)})^*,  j=1,\ldots, t, \frac{{\cal W}^{(i)}}{\sqrt{2N}}, i=1, \ldots, u\right).$$ 
Now, define  for $j=1,\ldots, t$, ${\bf a}_N^{(j)}=  \begin{pmatrix} 0 & a_N^{(j)} \\ 0 & 0\end{pmatrix}$,
$q_1= \begin{pmatrix}  1_{\mathcal A} & 0\\ 0 & 0\end{pmatrix}$, $q_2= \begin{pmatrix}  0 & 0\\ 0 & 1_{\mathcal A}\end{pmatrix}$ and $r= \begin{pmatrix}  0 & 1_{\mathcal A}\\ 0 & 0\end{pmatrix}$. 
Let $s_i^{(1)}$,  $s_i^{(2)}$,$i=1, \ldots,u$  be   semicircular variables such that
$ \{s_1^{(1)}\}, \ldots   \{s_u^{(1)}\}, \{s_1^{(2)}\}, \ldots   \{s_u^{(2)}\}, \{c_1,c_1^*\},\ldots, \{c_u,c_u^*\}, \{{ a}_N^{(j)}, j=1,\ldots,t\}$ are free. Define for $i=1,\ldots,u$, $${\bf s_i}=\frac{1}{\sqrt{2}}\begin{pmatrix} s_i^{(1)} & c^{(i)}\\(c^{(i)})^*  &  s_i^{(1)} \end{pmatrix}.$$ 
Similarly, \\

$\begin{pmatrix} 0 & K(c_1, \ldots, c_u, a_N^{(1)}, \ldots, a_N^{(t)})\\ \left[K( c_1, \ldots, c_u, a_N^{(1)}, \ldots, a_N^{(t)})\right]^* & 0 \end{pmatrix}$ $$= \hat K \left(q_1,q_2, r, r^*,  {\bf a}_N^{(j)}, ({\bf a}_N^{(j)})^*,  j=1,\ldots, t, {\bf s_i}, i=1, \ldots, u\right).$$ 
It readily follows that, the spectrum of $K_NK_N^*$ coincides with the spectrum of $ \hat K \left(Q_1,Q_2, R, R^*,  {\bf A}_N^{(j)}, ({\bf A}_N^{(j)})^*,  j=1,\ldots, t, \frac{{\cal W}^{(i)}}{\sqrt{2N}}, i=1, \ldots, u\right)^2$ and the spectrum of  of $K(c_1, \ldots, c_u, a_N^{(1)}, \ldots, a_N^{(t)}) \left[K( c_1, \ldots, c_u, a_N^{(1)}, \ldots, a_N^{(t)})\right]^*$ coincides with the spectrum of 
$\hat K \left(q_1,q_2, r, r^*,  {\bf a}_N^{(j)}, ({\bf a}_N^{(j)})^*,  j=1,\ldots, t, {\bf s_i}, i=1, \ldots, u\right)^2.$

Now, it is straightforward to see that the $\ast$-distribution of 
$(q_1,q_2,r, {\bf a}_N^{(j)}, j=1,\ldots,t) $ in $({\cal M}_2(\mathcal{A}),\tr_2\otimes\phi)$ coincides with the $\ast$-distribution of 
$(Q_1,Q_2,R,$ ${\bf A}_N^{(j)},j=1,\dots,t)$ in $({\cal M}_{2N}(\mathbb{C}),\tr_{2N})$. Moreover, by Lemma \ref{transfertliberte},  it turns out that the ${\bf s_i}$'s are free semicircular variables which are free with $(q_1,q_2,r,{\bf a}_N^{(j)}, j=1,\dots,t)$ 
in $({\cal M}_2(\mathcal{A}),\tr_2\otimes \phi)$.\\
Therefore, the first assertion of Proposition \ref{pasde} follows by applying  \cite[Theorem 1.1. and Remark 4]{BC}. 
The second assertion of Proposition \ref{pasde} can be proven by the same previous arguments.
Indeed, there exists a polynomial $\tilde K$ such that \\

\noindent $ \frac{1}{N} \Tr K\left( \frac{X_N^{(1)}}{\sqrt{N}}, \ldots, \frac{X_N^{(u)}}{\sqrt{N}},  A_N^{(1)}, \ldots, A_N^{(t)}\right)$
\begin{eqnarray*}
&=& \frac{1}{N} \Tr \left\{\begin{pmatrix}0&K_N\\K_N^*&0 \end{pmatrix} R^*\right\}\\&=& 2 \frac{1}{2N} \Tr  \tilde K \left(Q_1,Q_2, R, R^*,  {\bf A}_N^{(j)}, ({\bf A}_N^{(j)})^*,  j=1,\ldots, t, \frac{{\cal W}^{(i)}}{\sqrt{2N}}, i=1, \ldots, u\right) \end{eqnarray*}
Thus, using \cite[Proposition 2.2. and Remark 4]{BC}, we obtain that \\

$ \frac{1}{N} \Tr K\left( \frac{X_N^{(1)}}{\sqrt{N}}, \ldots, \frac{X_N^{(u)}}{\sqrt{N}},  A_N^{(1)}, \ldots, A_N^{(t)}\right)$
$$\underset{N\rightarrow +\infty}{\rightarrow} 2 \tr_2\otimes \phi \left[ \tilde  K \left(q_1,q_2, r, r^*,  {\bf a}^{(j)}, ({\bf a}^{(j)})^*,  j=1,\ldots, t, {\bf s_i}, i=1, \ldots, u\right)\right]$$ where, for $j=1,\ldots, t$, ${\bf a}^{(j)}=  \begin{pmatrix} 0 & a^{(j)} \\ 0 & 0\end{pmatrix}$. Now, \\

$  2 \tr_2\otimes \phi \left[\tilde  K \left(q_1,q_2, r, r^*,  {\bf a}^{(j)}, ({\bf a}^{(j)})^*,  j=1,\ldots, t, {\bf s_i}, i=1, \ldots, u\right)\right]$ \begin{eqnarray*} &=& 2 \tr_2\otimes \phi   \left\{\begin{pmatrix}0&K(c,a)\\K(c,a)^*&0 \end{pmatrix} r^*\right\}\\&=& \phi \left(K(c,a)\right).\end{eqnarray*} 
The second assertion of Proposition \ref{pasde} follows.

\subsection{Proof of Proposition \ref{transfert}}\label{serbanlinearisation}

We prove this using linearization and hermitization. Our linearization of a nonselfadjoint polynomial 
will naturally not be selfadjoint, so the results from \cite{BBC} do not apply directly to it, but some of
the methods will. Before we analyze this linearization, let us lay down the steps that we shall take
in order to prove the above result. Let $L$ be our linearization of $P({Y}_1,{Y}_2)-z_0$.
\begin{enumerate}

\item We have $|P(c,a_N)\!-\!z_0|^2\ge\epsilon_{z_0}\!\iff\!\begin{bmatrix}
0 & P(c,a_N)\!-\!z_0\\
(P(c,a_N)\!-\!z_0)^* & 0
\end{bmatrix}^2\!\ge\epsilon_{z_0}.$
\item 
There exists $\iota=\iota({\epsilon_{z_0}},P,\tau)>0$ such that 
$$
\!\!\left|\begin{bmatrix}
0 & P(c,a_N)\!-\!z_0\\
(P(c,a_N)\!-\!z_0)^* & 0
\end{bmatrix}\right|\ge\epsilon_{z_0}\!
\iff\! \left|\begin{bmatrix}
0 & L(c,a_N)\\
L(c,a_N)^* & 0
\end{bmatrix}\right|\ge\iota.
$$
\item We write 
$$
\begin{bmatrix}
0 & L(c,a_N)\\
L(c,a_N)^* & 0
\end{bmatrix}=\begin{bmatrix}
0 & L(0,a_N)\\
L(0,a_N)^* & 0
\end{bmatrix}+C,$$
where $C$ is a selfadjoint matrix containing only circular variables and their adjoints. It will be clear
that $
\begin{bmatrix}
0 & L(c,a_N)\\
L(c,a_N)^* & 0
\end{bmatrix}$ contains at most one nonzero element per row/column, except possibly for the first
row/column.
\item We use Lemma \ref{symm} to conclude that the lhs of the previous item is invertible if and only if
$$
\begin{bmatrix}
0 & (I_m\otimes \epsilon) L(0,a_N)\\
(I_m\otimes \epsilon) L(0,a_N)^* & 0
\end{bmatrix}+\mathcal S$$
is, where $\mathcal S$ is obtained from $C$ by replacing each circular entry with a semicircular from the 
same algebra (and hence free from $a_N$), and $\epsilon$ is a $\{-1,1\}$-Bernoulli distributed random 
variable which is independent from $a_N$ and free from $\mathcal S$. As noted in Example \ref{sem}, since 
$C=C^*$, $\mathcal S$ is indeed a matrix-valued semicircular random variable.
\item We apply Lemma \ref{Serban} to the above item in order to determine under what conditions the
sum in question has a spectrum uniformly bounded away from zero.
\item Finally, we use the convergence in moments of $a_N$ to $a$ in order to conclude that the 
conditions obtained in the previous item are satisfied by $\begin{bmatrix}
0 & (I_m\otimes \epsilon) L(0,a_N)\\
(I_m\otimes \epsilon) L(0,a_N)^* & 0
\end{bmatrix}+\mathcal S$.
\end{enumerate}

Part (1) is trivial:
$$
\begin{bmatrix}
0 & P(c,a_N)-z_0\\
(P(c,a_N)-z_0)^* & 0
\end{bmatrix}^2\!=\begin{bmatrix}
|P(c,a_N)-z_0|^2 & 0\\
0  & |(P(c,a_N)-z_0)^*|^2 
\end{bmatrix}.
$$
Since our variables live in a II${}_1$-factor, the two nonzero entries of the right hand side have the same
spectrum.

Part (2) requires a careful analysis of the linearization we use. The construction from \cite{A} proceeds by 
induction on the number of monomials in the given polynomial. 
If $P=X_{i_1}X_{i_2}X_{i_3}\cdots X_{i_{\ell-1}}X_{i_\ell}$, where
$\ell\ge2$ and $i_1,\dots,i_\ell\in\{1,\dots,k\}$, we set $n=\ell$ and
$$
L=-\begin{bmatrix}
0 & 0 & \cdots &  0 & X_{i_1}\\
0 & 0 & \cdots &  X_{i_2} & -1\\
\vdots&\vdots& \reflectbox{$\ddots$}&\vdots&\vdots\\
0 & X_{i_{\ell-1}}&\cdots&0&0\\
X_{i_\ell}&-1&\cdots&0&0
\end{bmatrix}.
$$
However, unlike in \cite{A,BBC}, we choose here $L$ to be
$$
L=-\begin{bmatrix}
0 & 0 & \cdots & 0 & 0 & 1\\
0 & 0 & \cdots &  0 & X_{i_1} & -1\\
0 & 0 & \cdots &  X_{i_2} & -1 & 0 \\
\vdots&\vdots& \reflectbox{$\ddots$}&\vdots&\vdots&\vdots\\
0 & X_{i_{\ell}}&\cdots&0&0 & 0\\
1 &-1&\cdots&0&0& 0
\end{bmatrix}.
$$
That is, we apply the procedure from \cite{A}, but to $P=1X_{i_1}X_{i_2}X_{i_3}\cdots X_{i_{\ell-1}}
X_{i_\ell}1$. If $\ell=1$, we simply complete $X$ to $1X1$. Even if we have a multiple of $1$, we 
choose here to proceed the same way. The lower right $\ell\times\ell$ corner of this matrix has an 
inverse of degree $\ell$ in the algebra ${\cal M}_{\ell}(\mathbb C\langle X_1,\dots,X_k\rangle)$. (The constant 
term in this inverse is a selfadjoint matrix and its spectrum is contained in $\{-1,1\}$.) The first row
contains only zeros and ones, and the first column is the transpose of the first row. Suppose now that 
$p=P_1+P_2$, where $P_1,P_2\in \mathbb C\langle X_1,\dots,X_k\rangle$, and that linear polynomials
$$
L_{j}=\begin{bmatrix}
0 & u_j^*\\
u_j & Q_j
\end{bmatrix}\in {\cal M}_{n_j}(\mathbb C\langle X_1,\dots,X_k\rangle),\quad j=1,2,
$$
linearize $P_1$ and $P_2$.  Then we set $n=n_1 +n_2 -1$ and observe that the matrix 
$$L=
\begin{bmatrix}
0 & u_1^* & u_2^*\\
u_1 & Q_1 & 0\\
u_2 & 0 &Q_2
\end{bmatrix}=
\begin{bmatrix}
0 & u^*\\
u & Q
\end{bmatrix}
\in {\cal M}_{n_1+n_2-1}(\mathbb C\langle X_1,\dots X_k\rangle).
$$
is a linearization of $P_1+P_2$. $L$ is built so that $\left[(ze_{1,1}-L)^{-1}\right]_{1,1}=(z-P)^{-1}$, 
$z-P$ is invertible if and only if $(ze_{1,1}-L)$ is invertible, and each row/column of the matrix $L$, 
except possibly for the first, contains at most one nonzero indeterminate (i.e. non-scalar).
By applying the linearization process to $1X_{i_1}X_{i_2}X_{i_3}\cdots X_{i_{\ell-1}}X_{i_\ell}1$
instead of $X_{i_1}X_{i_2}X_{i_3}\cdots X_{i_{\ell-1}}X_{i_\ell}$, we have insured that there is at most one 
nonzero indeterminate in each row/column. An important side benefit is that with this modification, we 
may assume that, with the notations from item \ref{linny} of Section \ref{Sec:linny},
$$
v=u, \text{ and all entries of this vector are either 0 or 1.}
$$
While this linearization is far from being minimal, and should not be used for practical computations, it 
turns out to simplify to some extent the notations and arguments of our proofs.

The concrete expression of the inverse of $ze_{1,1}-L$ in terms of $
L=\begin{bmatrix}
0 & u^*\\
u & Q
\end{bmatrix}$ is provided by the Schur complement formula as
$$
(ze_{1,1}-L)^{-1}=\begin{bmatrix}
(z-u^*Q^{-1}u)^{-1} & -(z-u^*Q^{-1}u)^{-1}u^*Q^{-1}\\
-Q^{-1}u(z-u^*Q^{-1}u)^{-1} & Q^{-1}+Q^{-1}u(z-u^*Q^{-1}u)^{-1}u^*Q^{-1}
\end{bmatrix}.
$$
It follows easily from this formula that $z-P$ is invertible if and only if $ze_{1,1}-L$ is invertible. It was
established in \cite[Lemma 4.1]{BBC} that $Q$, and hence $Q^{-1}$, is of the form $T(1+N)$ for some 
permutation scalar matrix $T$ and nilpotent matrix $N$, which may contain non-scalar entries. Let us 
establish a non-selfadjoint (and thus necessarily weaker) version of \cite[Lemma 4.3]{BBC}.

\begin{lemma}\label{esty}
Assume that $P\in\mathbb C\langle{Y_1},{ Y_2}\rangle$ is an arbitrary polynomial in the 
non-selfadjoint indeterminates ${Y_1}$ and selfadjoint indeterminates ${Y_2}$. Let $L$ be 
a linearization of $P$ constructed as above. Given tuples of noncommutative random variables
$c$ and $a$, for all $\delta>0$ such that $|P(c,a)|^2>\delta$, there exists $\mathfrak e>0$ such that $|L(c,a)|^2>\mathfrak e$, and the number $\mathfrak e$ only depends on $\delta>0,$ $P,$ and the 
supremum of the norms of $c,a$. Conversely, for all $\mathfrak 
e>0$ such that $|L(c,a)|^2>\mathfrak 
e$, there exists $q>0$ such that $|P(c,a)|^2>q>0$ and $q$ depends only on $\mathfrak e$, $P,$ and the supremum of 
the norms of $c,a$.
\end{lemma}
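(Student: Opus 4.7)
The strategy is to reduce both implications to the quantitative invertibility relation between $P(c,a)$ and $L(c,a)$. Recall that, in a (tracial) $\mathcal{C}^*$-algebra, an inequality of the form $|M|^2 \geq t\cdot 1$ with $t>0$ is equivalent to $M$ being invertible with $\|M^{-1}\| \leq t^{-1/2}$. Hence both hypotheses and both conclusions can be rephrased as bounds on the norm of the inverse.

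For the forward direction, assume $|P(c,a)|^2 > \delta$, so that $P(c,a)$ is invertible and $\|P(c,a)^{-1}\| \leq \delta^{-1/2}$. Take $z=0$ in Lemma \ref{resHari}: since $-P(c,a) = 0\cdot 1 - P(c,a)$ is invertible, so is $0\cdot e_{11}\otimes 1 - L(c,a) = -L(c,a)$, and
\[
\|L(c,a)^{-1}\| \leq T_1(\|c\|,\|a\|)\,\|P(c,a)^{-1}\| + T_2(\|c\|,\|a\|) \leq T_1(\|c\|,\|a\|)\,\delta^{-1/2} + T_2(\|c\|,\|a\|),
\]
where $T_1,T_2$ depend only on the linearization $L$ (hence only on $P$). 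Setting $\mathfrak{e}$ to be the inverse square of this upper bound yields $|L(c,a)|^2 \geq \mathfrak{e}$ with $\mathfrak{e}$ depending only on $\delta$, $P$, and the supremum of $\|c\|,\|a\|$.

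For the converse direction, assume $|L(c,a)|^2 > \mathfrak{e}$, so $L(c,a)$ is invertible with $\|L(c,a)^{-1}\| \leq \mathfrak{e}^{-1/2}$. By Lemma \ref{inversible} (equivalently, by the explicit Schur complement formula recalled just before the statement), $P(c,a)$ is invertible and its inverse equals $\pm$ the $(1,1)$-block of $L(c,a)^{-1}$. Compressing by the projection $e_{11}\otimes 1$ can only decrease the norm, so $\|P(c,a)^{-1}\| \leq \|L(c,a)^{-1}\| \leq \mathfrak{e}^{-1/2}$, which gives $|P(c,a)|^2 \geq \mathfrak{e}$, so we may take $q=\mathfrak{e}$.

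There is no serious obstacle: Lemma \ref{resHari} already does the quantitative work for the harder direction, while the converse is a direct extraction of a matrix block. The only thing to be mindful of is that we must plug $z=0$ into Lemma \ref{resHari} and track that the polynomials $T_1,T_2$ depend only on the (fixed) linearization procedure applied to $P$, so the constants $\mathfrak{e}$ and $q$ indeed depend only on $\delta$ (resp.\ $\mathfrak{e}$), on $P$, and on the supremum of the operator norms of the entries of $c$ and $a$, as claimed.
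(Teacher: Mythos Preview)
Your proof is correct. Both directions rest on the same Schur complement structure as the paper's argument, but you package the forward direction more efficiently: instead of writing out all four blocks of $|L|^{-2}$ and bounding each one via the nilpotent structure of $Q$, you simply invoke Lemma~\ref{resHari} at $z=0$, which already encodes exactly this bound on $\|L(c,a)^{-1}\|$ in terms of $\|P(c,a)^{-1}\|$ and polynomials in the entry norms. For the converse, the paper extracts the $(1,1)$ block of $(LL^*)^{-1}$, obtaining $(P(1+u^*(Q^*)^{-1}Q^{-1}u)^{-1}P^*)^{-1}$ and then using $1+u^*(Q^*)^{-1}Q^{-1}u\ge1$; you instead extract the $(1,1)$ block of $L^{-1}$ itself, which is $P^{-1}$ on the nose by Lemma~\ref{inversible}, and compress. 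Both routes yield $q=\mathfrak e$, but yours avoids the intermediate positivity manipulation. The paper's longer computation has the minor advantage of making the dependence on $P$ fully explicit (via $\|u\|$, $\|Q\|$, $\|Q^{-1}\|$), whereas you inherit it implicitly from $T_1,T_2$; for the purposes of the lemma this is immaterial.
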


\begin{proof}
With the decomposition $L\!=\!\begin{bmatrix}
0 & u^*\\
u & Q
\end{bmatrix}$, we have $|L|^2\!=\!\begin{bmatrix}
u^*u & u^*Q^*\\
Qu & \!uu^*\!+\!QQ^*
\end{bmatrix}$. Recall that $|P|^2=u^*Q^{-1}uu^*(Q^{-1})^*u$. Now consider these expressions evaluated
in the tuples of operators mentioned in the statement of the lemma. In order to save space, we will 
nevertheless suppress them from the notation. We assume that $|P|^2>\delta$. Strangely enough, it
will be more convenient to estimate an upper bound for $|L|^{-2}$ rather than a lower bound for $|L|^2$.
The entries of $|L|^{-2}$ expressed in terms of the above decomposition are
\begin{eqnarray*}
(|L|^{-2})_{1,1}& = & \left(u^*u-u^*Q^*(uu^*+QQ^*)^{-1}Qu\right)^{-1}, \\
(|L|^{-2})_{1,2}& = & -\left(u^*u-u^*Q^*(uu^*+QQ^*)^{-1}Qu\right)^{-1}u^*Q^*(uu^*+QQ^*)^{-1},\\
(|L|^{-2})_{2,1}& = & -(uu^*+QQ^*)^{-1}Qu\left(u^*u-u^*Q^*(uu^*+QQ^*)^{-1}Qu\right)^{-1}, \\
(|L|^{-2})_{2,2}& = & 
(uu^*\!\!+\!QQ^*)^{-1}\!Qu\!\left(\!u^*\!u\!-\!u^*Q^*(uu^*\!\!+\!QQ^*)^{-1}\!Qu\!\right)^{-1}\!\!
u^*Q^*\!(uu^*\!\!+\!QQ^*)^{-1}\\
& & \mbox{}+(uu^*+QQ^*)^{-1}.
\end{eqnarray*}
We only need to estimate the norms of the above elements in terms of $\delta$, $P$, and the norms
of the variables in which we have evaluated the above. It is clear that 
\begin{eqnarray*}
(|L|^{-2})_{1,1}& = & 
\left(u^*Q^{-1}u(1+u^*(Q^*)^{-1}Q^{-1}u)^{-1}u^*(Q^*)^{-1}u\right)^{-1}\\
& = & \left(P(1+u^*(Q^*)^{-1}Q^{-1}u)^{-1}P^*\right)^{-1}\\
& \leq & \left(P(\|1+u^*(Q^*)^{-1}Q^{-1}u\|)^{-1}P^*\right)^{-1}\\
& = & (1+\|u^*(Q^*)^{-1}Q^{-1}u\|)|P|^{-2}.
\end{eqnarray*}
Similarly, $(uu^*+QQ^*)^{-1}\leq(QQ^*)^{-1}\leq\|Q^{-1}\|^2.$ We obtain this way the following 
majorizations for each of the entries, which will allow us to estimate $\mathfrak e$ (these majorizations 
are not optimal, but close to):
\begin{eqnarray*}
\|(|L|^{-2})_{1,1}\|& \le &  (1+\|u^*(Q^*)^{-1}Q^{-1}u\|)\||P|^{-2}\|, \\
\|(|L|^{-2})_{1,2}\|& \le & (1+\|u^*(Q^*)^{-1}Q^{-1}u\|)\||P|^{-2}\|\|u^*\|\|Q^*\|\|Q^{-1}\|^2,\\
\|(|L|^{-2})_{2,1}\|& \le & \|Q^{-1}\|^2\|Q\|\|u\|(1+\|u^*(Q^*)^{-1}Q^{-1}u\|)\||P|^{-2}\|, \\
\|(|L|^{-2})_{2,2}\|& \le & \|Q^{-1}\|^4\|Q\|^2\|u\|^2(1+\|u^*(Q^*)^{-1}Q^{-1}u\|)\||P|^{-2}\|+\|Q^{-1}\|^2.
\end{eqnarray*}
We shall not be much more explicit than this, but let us nevertheless comment on why the above satisfies
the corresponding conclusion of our lemma. As noted before, $u$ is a vector of zeros and ones. It 
follows immediately from the construction of $L$ that the number of ones is dominated by the number of 
monomials of $P$, quantity clearly depending only on $P$. Recall that $Q$ is of the form $T(1+N)$, with 
$T$ a permutation matrix, and $N$ a nilpotent matrix. The norm of $T$ is necessarily one. The nilpotent 
matrix corresponding to $Q$ is simply a block upper diagonal matrix (i.e. a matrix which has on its 
diagonal a succession of blocks, each block being itself an upper diagonal matrix) with entries which are 
operators from the tuples $a$ and $c$ in which we evaluate $P$ (and $L$). Its norm is trivially bounded
by the supremum of all the norms of the operators involved times the supremum of all the scalar 
coefficients. Since $\|Q^{-1}\|=\|T^{-1}(1+N)^{-1}\|\le1+\sum_{j=1}^m\|N\|^j$, where $m$ is the size
of the linearization, we obtain an estimate for $\|Q^{-1}\|$ from above by $(m+1)(1+\|Q\|)^m$. Finally,
$\||P|^{-2}\|\le\delta^{-1}$. This guarantees that $\||L|^{-2}\|$ is bounded from above, so that $|L|^2$ 
is bounded from below, by a number $\mathfrak e$ depending on $\delta$, $P$, and the norms of the entries of $P$.

Conversely, assume that $|L|^2>\mathfrak e$ for a given strictly positive constant $\mathfrak e$. As 
before, this is equivalent to $\||L|^{-2}\|<\frac{1}{\mathfrak e}$, which allows for the estimate of the 
$(1,1)$ entry of $|L|^{-2}$ by $\left\|\left(P(1+u^*(Q^*)^{-1}Q^{-1}u)^{-1}P^*\right)^{-1}\right\|<
\frac1{\mathfrak e}$, so that
$$
\left(P(1+u^*(Q^*)^{-1}Q^{-1}u)^{-1}P^*\right)^{-1}<\frac1{\mathfrak e},
$$
as inequality of operators. This tells us that $P(1+u^*(Q^*)^{-1}Q^{-1}u)^{-1}P^*>\mathfrak e$, so that
$$
PP^*>\frac{\mathfrak e}{\|(1+u^*(Q^*)^{-1}Q^{-1}u)^{-1}\|}\geq\mathfrak e.
$$
This concludes the proof.
%, with $q=\frac{\mathfrak e}{\|(1+u^*(Q^*)^{-1}Q^{-1}u)^{-1}\|}.$
\end{proof}

Part (3) is a simple formal step.

Step (4) becomes a direct consequence of Lemma \ref{symm}. 

Now, in step (5),  we finally involve our variables $c,a,a_N$ directly. We have assumed that
$|P(c,a)-z_0|^2>\delta_{z_0}>0$, so that, according to steps (1) and (2), we have 
$\left|\begin{bmatrix} 0 & L(c,a)\\ L(c,a)^* & 0 \end{bmatrix}\right|>\iota$ for a $\iota>0$ depending,
according to step (2), only on $P$, $\delta_{z_0}$, and the norms of $c,a$. According to step (4),
it follows that $\begin{bmatrix} 0 & (I_m\otimes \epsilon) L(0,a)\\ (I_m\otimes \epsilon)L(0,a)^* & 0 \end{bmatrix}+\mathcal S$
is invertible; moreover, the norm of the inverse is bounded in terms of $P$, $\delta_{z_0}$, and the norms 
of $c,a$.
According to Lemma \ref{Serban} and Remark \ref{meh}, denoting $\mathcal Y=
\begin{bmatrix} 0 & (I_m\otimes \epsilon) L(0,a)\\ (I_m\otimes \epsilon) L(0,a)^* & 0 \end{bmatrix}$, the condition of invertibility
of $\mathcal{S+Y}$ is equivalent to the invertibility of $\mathcal Y$ together with the existence of an
$r\in(0,1)$ such that 
$\text{spect}(\eta\circ G_\mathcal Y'(0))\subset(1-r)\mathbb D$. We naturally denote $\mathcal Y_N=
\begin{bmatrix} 0 & (I_m\otimes \epsilon) L(0,a_N)\\ (I_m\otimes \epsilon) L(0,a_N)^* & 0 \end{bmatrix}$. We have
assumed that $|P(0,a_N)-z_0|^2>\delta_{z_0}$ for all (sufficiently large) $N\in\mathbb N$, so that
$|\mathcal Y_N|^2>\zeta$ for a $\zeta$ that only depends on $P,\delta_{z_0}$, and the supremum
of the norms of $a_N$, which is assumed to be bounded. Thus, $|\mathcal Y_N|^2$ is uniformly
bounded from below as $N\to\infty$. In order to insure the invertibility of $\mathcal{S+Y}_N$,
we also need that $\text{spect}(\eta\circ G_{\mathcal Y_N}'(0))\subset\overline{\mathbb D}\setminus\{1\}$, 
for all $N$ sufficiently large. The existence of $G_{\mathcal Y_N}'(0)$ is guaranteed by the hypothesis of 
invertibility of $\mathcal Y_N$. Since 
$$
G_{\mathcal Y_N}'(0)(c)=(id_m\otimes\varphi)\left[\mathcal Y_N^{-1}c\mathcal Y_N^{-1}\right],
$$
and 
$$
\mathcal Y_N^{-1}=\begin{bmatrix}0&(I_m\otimes \epsilon)(L(0,a_N)^*)^{-1}\\(I_m\otimes \epsilon) L(0,a_N)^{-1}&0 \end{bmatrix},
$$
we only need to remember that all entries of $L(0,a_N)^{-1}$ are products of polynomials in $a_N$
and $(P(0,a_N)-z_0)^{-1}$ in order to conclude that the convergence in moments of $a_N$ to $a$
implies the convergence in norm 
of $G_{\mathcal Y_N}'(0)$ to $G_{\mathcal Y}'(0)$ (recall that, according to hypothesis 2. in the statement 
of our proposition, $|P(0,a_N)-z_0|^2>\delta_{z_0}>0$ uniformly). Thus, for $N$ sufficiently large, all 
eigenvalues of $\eta\circ G_{\mathcal Y_N}'(0)$ are included in $(1-\frac{r}{2})\mathbb D$. 
This guarantees the invertibility of all $\mathcal{S+Y}_N$ for $N$ sufficiently large.

To prove item (6) and conclude our proof, we only need to show that for $N$ sufficiently large,
$|\mathcal{S+Y}_N|^2>\frac{\iota}{2}$. There is a simple abstract shortcut for this: as Proposition \ref{2} 
shows, the endpoint of the support of the (scalar) distribution of $\mathcal{S+Y}_N$
is given by that first $x_N\in(0,+\infty)$ for which $1\in\text{spect}(\eta\circ G_{\mathcal{Y}_N}'(x_N))$.
On the one hand, $G_{\mathcal{Y}_N}$ is guaranteed to be analytic on $[0,\delta_{z_0}]$. On the other,
since ${\mathcal{Y}_N}\to\mathcal Y$ in distribution, we have $G_{\mathcal{Y}_N}\to G_{\mathcal{Y}}$
uniformly on $[0,\delta_{z_0}-\varepsilon]$ for any fixed $\varepsilon>0$. In particular, 
$G_{\mathcal{Y}_N}'(x)\to G_{\mathcal{Y}}'(x)$ for any $x$ in this interval. Thus, $x_N$ is 
bounded away from zero uniformly in $N$ as $N\to\infty$. A second application of the convergence
of $G_{\mathcal{Y}_N}$ allows us to conclude.

\section{Stable outliers; proof of Theorem \ref{outlier}}\label{sp}
Making use of a linearization procedure, the  proof closely follows the approach of \cite{BoC}. The most significant  novelty is Proposition \ref{conjecture} which substantially generalizes Theorem 1.3. A. in \cite{CI} (see also Proposition 2.1 in \cite{BoC}) and whose proof relies on operator-valued free probability results established in Section \ref{prelim}. Nevertheless, we precise all arguments for the reader's convenience.

Let 
$$
L_P=\gamma\otimes1+\sum_{j=1}^{u}\zeta_j\otimes y_{j}+\sum_{k=1}^{t}\beta_k\otimes y_{u+k},
$$ 
be a linearization of $P(y_1,\ldots, y_{u+t})$ with coefficients in ${\cal M}_m(\mathbb{C})$ such that, for any $u+t$-tuple $y$ of matrices, $|\det Q(y)|=1$ 
 (see \eqref{detQ}).\\ Let $\Gamma$ be a compact set in $\mathbb{C}\setminus\text{spect}
(P(c,a))$. Note that 
$$
\min_{z\in\partial\Gamma}\left|\frac{\det (z I_N-P(0_N,\dots,0_N,A_N^{(1)},\dots,A_N^{(t)}))}{\det(zI_N-P(0_N,\dots,0_N,
(A_N^{(1)})^{'} ,\ldots, (A_N^{(t)})^{'})}\right|\geq \epsilon
$$ 
is equivalent to 
 \begin{eqnarray}\label{conditionRouche2}
\lefteqn{
\min_{z\in\partial\Gamma}\left|\frac{\det(zI_N-P(0_N,\dots,0_N,A_N^{(1)},\dots,A_N^{(t)}))}{\det(zI_N-P(0_N,\ldots,0_N,
(A_N^{(1)})^{'} , \ldots, (A_N^{(t)})^{'})}\right.}\\
& & \quad\quad\quad\quad\quad\quad\quad\quad\quad\quad\quad\quad
\mbox{}\times\left.\frac{\det(Q(0_N,\dots,0_N,A_N^{(1)},\ldots,A_N^{(t)}))}{\det Q(0_N,\dots,0_N,(A_N^{(1)})^{'},\dots,
(A_N^{(t)})^{'})}\right| \geq \epsilon,\nonumber
\end{eqnarray} 
since $|\det Q|$ is constant. Now, following the proof of Lemma 4.3 in \cite{BBC}, one can see that this is  also equivalent to 
\begin{equation}\label{conditionRouche}
\min_{z\in\partial\Gamma}\left|\frac{\det(ze_{11}\otimes I_N-\gamma\otimes I_N-\sum_{k=1}^t\beta_k 
\otimes A_N^{(k)})}{\det(ze_{11}\otimes I_N-\gamma\otimes I_N-\sum_{k=1}^t\beta_k\otimes(A_N^{(k)})^{'}  )}
\right|\geq\epsilon.
\end{equation}
According to Lemma \ref{inversible}, the eigenvalues of ${ M}_N$ are the zeroes of $z\mapsto\det(ze_{11} 
\otimes I_N -\gamma\otimes I_N-\sum_{j=1}^u\zeta_j\otimes \frac{X_N^{(j)}}{\sqrt{N}}-\sum_{k=1}^t\beta_k\otimes
(A_N^{(k)}))$. By Assumption (${\bf A_2'}$), Proposition \ref{nonunif} and Lemma \ref{inversible}, almost surely for all large $N$,  for any $z\in\Gamma$, we 
can define
$$
R_N(z)=(ze_{11}\otimes I_N-\gamma\otimes I_N-\sum_{j=1}^u\zeta_j\otimes \frac{X_N^{(j)}}{\sqrt{N}} -
\sum_{k=1}^t\beta_k \otimes(A_N^{(k)})^{'} )^{-1},
$$
$$
R{'}_N(z)=(ze_{11} \otimes I_N -\gamma \otimes I_N  -\sum_{k=1}^t\beta_k \otimes
(A_N^{(k)})^{'}  )^{-1}.
$$
Note that, since each $(A_N^{(k)})^{''}$ has a bounded rank  $r_k(N)=O(1)$,  there exist matrices $P_N\in {\cal M}_{mN, p}$, $Q_N \in M_{p, mN}$, where $p$ is fixed, such that
\begin{equation}\label{defPQ}
\sum_{k=1}^t\beta_k \otimes(A_N^{(k)})^{''} =P_NQ_N.
\end{equation}
Recall Sylvester's identity: if $P, Q^{\top}\in {\cal M}_{d_1,d_2}(\mathbb{C})$, 
$$
\det ( I_{d_1} + PQ ) = \det ( I_{d_2} + QP ).
$$
Using this identity, it is clear that, almost surely for all large $N$,  the eigenvalues of ${ M}_N$ in $\Gamma$ are precisely the zeros of the 
random analytic function $z \mapsto \det (I_p-Q_N R_N(z) P_N)$ in that set. \\
Now, similarly,  for any $z$ in $\Gamma$,
\begin{equation} \label{quotient}
\det(I_p-Q_NR{'}_N(z)P_N)=\frac{\det(ze_{11}\otimes I_N -\gamma\otimes I_N -\sum_{k=1}^t\beta_k\otimes
A_N^{(k)})}{\det(ze_{11}\otimes I_N -\gamma \otimes I_N  -\sum_{k=1}^t\beta_k \otimes(A_N^{(k)})^{'})}.
\end{equation}
Thus, the zeroes of $z \mapsto \det(I_p-Q_N R{'}_N(z)P_N)$ in $\Gamma$ are the zeroes of $z \mapsto \det(ze_{11} \otimes 
I_N-\gamma\otimes I_N-\sum_k\beta_k\otimes A_N^{(k)})$ in $\Gamma$, that is, the eigenvalues in 
$\Gamma$ of  
$$
M^{(0)}_N=P( 0_N, \ldots, 0_N, A_N^{(1)}, \ldots, A_N^{(t)}).
$$
The rest of the proof is devoted to establish that $\det(I_p-Q_NR_N(z)P_N)-\det(I_p-Q_NR{'}_N(z)P_N)$ converges uniformly in $\Gamma$ to zero.\\

\noindent {\bf Step 1: Iterated resolvent identities.}\\
  
\noindent Set 
$$
Y_N=\sum_{j=1}^u \zeta_j \otimes \frac{X_N^{(j)}}{\sqrt{N}}.
$$
Using repeatedly the resolvent identity,
$$
R_N(z)= R'_{N}(z) +  R'_N(z)  Y_N R_N(z),
$$
we find that, for any integer number $K \geq 2$, 
\begin{align} & Q_NR_N(z) P_N  -  Q_N R'_N(z) P_N  \nonumber\\
&\quad\quad=\sum_{k=1}^{K-1}Q_N\left(R'_N(z)Y_N\right)^kR'_N(z)P_N+Q_N\left(R'_N(z)Y_N\right)^{K}
R_N(z)P_N.\label{exp}
\end{align}
The following two steps will be of basic use to prove the uniform convergence in $\Gamma$ of  the right hand side of \eqref{exp} towards zero.\\

\noindent {\bf Step 2: Study of the spectral radius of $R'_N(z)Y_N$.}\\
\noindent The aim of this second step is to prove Lemma \ref{rayonspectral} which establishes an upper bound strictly smaller than 1 of the spectral norm of $R'_N(z)Y_N$. The proof of Lemma \ref{rayonspectral} is based on Proposition \ref{nonunif} and the characterization, provided by Lemma \ref{Serban}, of the invertibility of the sum of a centered ${\cal M}_m(\C)$-valued semi-circular $s$ and some selfadjoint $y\in {\cal M}_m(\mathcal{A})$ with non-commutative symetric entries such that $s$ and $y$ are free over ${\cal M}_m(\C)$.
Recall that $\mu_z$ is the distribution of 
$$
\left[P(c^{(1)},\dots,c^{(u)},a^{(1)},\dots,a^{(t)})-z1_{\mathcal{A}}\right]\left[P(c^{(1)},\dots,c^{(u)},a^{(1)},\dots,a^{(t)})-
z1_{\mathcal{A}}\right]^*.
$$
Define $\nu_z$ as the distribution of  
$$
\left[P( 0_{\mathcal{A}}, \ldots, 0_{\mathcal{A}}, a^{(1)},\dots,a^{(t)})-z1_{\mathcal{A}}\right] \left[P( 0_{\mathcal{A}}, \ldots, 0_{\mathcal{A}}, a^{(1)},\dots,a^{(t)})-z1_{\mathcal{A}}\right]^*, 
$$
and $S_0=\left\{z \in \mathbb{C}, 0\in  \text{supp}(\nu_z)\right\} .$

\begin{proposition}\label{conjecture}
Let 
$$
L_P=\gamma\otimes1+\sum_{j=1}^{u}\zeta_j\otimes y_{j}+\sum_{k=1}^{t}\beta_k\otimes y_{u+k},
$$ 
be a linearization of $P(y_1,\ldots, y_{u+t})$ with coefficients in ${\cal M}_m(\mathbb{C})$. Set 
$$
y_z=\sum_{k=1}^{t}\beta_k \otimes a^{(k)}+(\gamma-ze_{11})\otimes 1_{\mathcal{A}}.
$$ 
Let $\epsilon$ be some selfadjoint $\{-1,+1\}$-Bernoulli variable in ${\cal A}$ independent from the 
entries of $y_z$. Let $s_1,\ldots,s_u$ be free semicircular variables in ${\cal A}$ free from $\epsilon$ and 
the entries of $y_z$. Define 
$$
\mathcal Y_z=\begin{pmatrix} 0& (I_m\otimes \epsilon) y_z\\(I_m\otimes \epsilon) y_z^*& 0\end{pmatrix}\text{and~}\mathcal S=
\begin{pmatrix} 0&\sum_{j=1}^u\zeta_j\otimes s_j\\ \sum_{j=1}^u \zeta_j^*\otimes s_j& 0\end{pmatrix}.
$$
If $z\notin S_0$, let $\Delta_1(z)$ be the operator 
$$
\begin{array}{ll} 
{\cal M}_{2m}(\mathbb{C})\to {\cal M}_{2m}(\mathbb{C})\\
b\mapsto id_{2m}\otimes\phi\left(\mathcal S( [id_{2m}\otimes\phi((\mathcal Y_z)^{-1}(b\otimes 1)(
\mathcal Y_z)^{-1})]\otimes1)\mathcal S\right).
\end{array}
$$
We have $0\notin\text{supp}(\mu_z)$ iff $z\notin S_0$ and $\text{spect}(\Delta_1(z))\subseteq 
\overline{\mathbb D}\setminus\{1\}$.
\end{proposition}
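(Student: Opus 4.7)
My plan is to combine Anderson's linearization, the symmetrization trick of Lemma \ref{symm}, and the operator-valued invertibility criterion of Lemma \ref{Serban}, the goal being to recast invertibility of $P(c,a)-z$ as invertibility of the sum of a selfadjoint element and a centered semicircular over ${\cal M}_{2m}(\mathbb{C})$, so that Lemma \ref{Serban} can be applied directly.

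\emph{Step 1 (linearization).} By Lemma \ref{inversible}, $0\notin\mathrm{supp}(\mu_z)$ iff $P(c,a)-z$ is invertible iff $ze_{11}\otimes 1-L_P(c,a)=-\bigl(\sum_{j=1}^{u}\zeta_j\otimes c^{(j)}+y_z\bigr)$ is invertible in ${\cal M}_m(\mathcal{A})$. Applying the same observation to $(0,a)$ in place of $(c,a)$, $z\notin S_0$ iff $y_z$ is invertible.

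\emph{Step 2 (symmetrization and hermitization).} By Lemma \ref{symm}, the ${\cal M}_m(\mathbb{C})$-valued distribution of $\bigl|\sum_j\zeta_j\otimes c^{(j)}+y_z\bigr|^2$ coincides with that of $\bigl|\sum_j\zeta_j\otimes s_j+(I_m\otimes\epsilon)y_z\bigr|^2$, so the two operators are simultaneously invertible. Since an element $X$ is invertible iff $\bigl(\begin{smallmatrix} 0 & X\\ X^* & 0\end{smallmatrix}\bigr)^2=\diag(XX^*,X^*X)$ is (Remark \ref{supportloi}), hermitizing the second operator and using that $\epsilon$ commutes with the entries of $y_z$ produces exactly $\mathcal{S}+\mathcal{Y}_z$. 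The same argument applied to $y_z$ alone yields that $\mathcal{Y}_z$ is invertible iff $z\notin S_0$.

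\emph{Step 3 (Lemma \ref{Serban}).} I verify the hypotheses in $({\cal M}_{2m}(\mathcal{A}),id_{2m}\otimes\phi)$ over ${\cal M}_{2m}(\mathbb{C})$: writing $\mathcal{S}=\sum_j A_j\otimes s_j$ with $A_j=\bigl(\begin{smallmatrix} 0 & \zeta_j\\ \zeta_j^* & 0\end{smallmatrix}\bigr)$ selfadjoint, $\mathcal{S}$ is a centered ${\cal M}_{2m}(\mathbb{C})$-valued semicircular (Example \ref{sem}); $\mathcal{Y}_z$ is selfadjoint, and each of its entries is a product of the selfadjoint $\pm 1$-Bernoulli variable $\epsilon$ (commuting with and tensor-independent of the other factor) and an entry of $y_z$, hence symmetric; finally, the freeness of $\mathcal{S}$ from $\mathcal{Y}_z$ over ${\cal M}_{2m}(\mathbb{C})$ follows, exactly as in the proof of Lemma \ref{transfertliberte}, by transferring the scalar freeness of $\{s_1,\dots,s_u\}$ from the $\ast$-algebra generated by $\{\epsilon,a^{(1)},\dots,a^{(t)}\}$ to matrix-valued freeness via \cite[Chapter 9]{MS}. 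Lemma \ref{Serban} then gives that $\mathcal{S}+\mathcal{Y}_z$ is invertible iff $\mathcal{Y}_z$ is invertible and $\mathrm{spect}(\eta\circ G_{\mathcal{Y}_z}'(0))\subseteq\overline{\mathbb{D}}\setminus\{1\}$, where $\eta$ is the covariance of $\mathcal{S}$.

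\emph{Step 4 (identification of $\Delta_1(z)$).} Differentiating $G_{\mathcal{Y}_z}(b)=id_{2m}\otimes\phi[(\mathcal{Y}_z-b\otimes 1)^{-1}]$ at $0$ yields $G_{\mathcal{Y}_z}'(0)(b)=id_{2m}\otimes\phi[\mathcal{Y}_z^{-1}(b\otimes 1)\mathcal{Y}_z^{-1}]$, while the covariance of the centered $\mathcal{S}$ is $\eta(c)=id_{2m}\otimes\phi[\mathcal{S}(c\otimes 1)\mathcal{S}]$. Composing these two reproduces exactly the operator $\Delta_1(z)$ of the statement, and concatenating Steps 1-3 gives the announced equivalence. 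The main technical point is the matrix-valued freeness in Step 3; everything else is either already in place in Section \ref{prelim} or a direct algebraic computation.
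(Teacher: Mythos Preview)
Your proof is correct and follows essentially the same route as the paper's: linearize via Lemma \ref{inversible}, symmetrize via Lemma \ref{symm}, hermitize, and then invoke Example \ref{sem} and Lemma \ref{Serban}. You spell out more detail than the paper does---in particular the verification that $\mathcal{Y}_z$ has symmetric entries, the matrix-valued freeness of $\mathcal{S}$ from $\mathcal{Y}_z$, and the explicit identification $\Delta_1(z)=\eta\circ G'_{\mathcal{Y}_z}(0)$---but the underlying argument is the same.
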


\begin{proof}

According to Remark \ref{supportloi}, we have that $0\notin \text{supp}(\mu_{z})$ if and only if $P(c^{(1)},
\dots,c^{(u)},a^{(1)},\dots,a^{(t)})-z1$ is invertible. According to Lemma \ref{inversible}, it follows that 
$0\notin\text{supp}(\mu_{z})$ if and only if $\sum_{j=1}^u\zeta_j\otimes c^{(j)}+y_z$ is invertible.
 Now, $\sum_{j=1}^u \zeta_j \otimes c^{(j)} +y_z$ is invertible if and only if $\left[\sum_{j=1}^u \zeta_j\!
\otimes\!c^{(j)}\!+\!y_z\right]\!\!\left[\sum_{j=1}^u\zeta_j\!\otimes\!c^{(j)}\!+y_z\right]^*$ and 
$\left[\sum_{j=1}^u \zeta_j\!\otimes\!c^{(j)}\!+y_z\right]^*\!\left[\sum_{j=1}^u \zeta_j\!\otimes\!c^{(j)}\! 
+y_z\right]$ are invertible, and then, by Lemma \ref{symm}, since $\tr_m\!\otimes\phi$ is faithful, if and 
only if $\left[\sum_{j=1}^u\zeta_j\!\otimes\!s_j+(I_m\otimes \epsilon) y_z\right]\!\left[\sum_{j=1}^u\zeta_j\!\otimes
\!s_j+(I_m\otimes \epsilon) y_z\right]^*$ and $\left[\sum_{j=1}^u \zeta_j \otimes s_j +(I_m\otimes \epsilon) y_z\right]^*
\left[\sum_{j=1}^u \zeta_j \otimes s_j + (I_m\otimes \epsilon) y_z\right]$ are  invertible, that is if and only if 
${\cal S}+{\cal Y}_z$ is invertible. Thus, Proposition \ref{conjecture} follows from  Example \ref{sem} and  Lemma \ref{Serban}.
\end{proof}

\noindent  Define for any $w, z$ in $ \mathbb{C}$,
$\mu_{w,z}$ as the distribution of 
$$ \left[P( w c^{(1)}, \ldots, w c^{(u)},  a^{(1)},\dots,a^{(t)})-zI\right] \left[P( wc^{(1)}, \ldots, wc^{(u)}, a^{(1)},\dots,a^{(t)})-zI\right]^*.$$
\begin{lemma}\label{w}
$0\notin\text{supp}(\mu_{w,z})$ if and only if $z\notin S_0$ and $\text{spect}(|w|^2\Delta_1(z))\subseteq 
\overline{\mathbb D}\setminus\{1\}$, where $S_0$ and $\Delta_1(z)$ are defined in Proposition \ref{conjecture}.
\end{lemma}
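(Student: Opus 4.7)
The plan is to mirror the proof of Proposition \ref{conjecture}, treating $wc^{(j)}$ as a circular variable with rescaled variance $|w|^2$ and tracking how the rescaling propagates through the linearization/symmetrization/semicircular invertibility chain.

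First, I observe that $0\notin\text{supp}(\mu_{w,z})$ iff $P(wc^{(1)},\dots,wc^{(u)},a^{(1)},\dots,a^{(t)})-z1_{\mathcal A}$ is invertible (Remark \ref{supportloi}), which by Lemma \ref{inversible} applied to the linearization $L_P$ is equivalent to the invertibility of
$$
\sum_{j=1}^{u}\zeta_j\otimes (wc^{(j)})+y_z=\sum_{j=1}^{u}(w\zeta_j)\otimes c^{(j)}+y_z
$$
in ${\cal M}_m({\cal A})$. Next I apply Lemma \ref{symm} with the rescaled coefficients $w\zeta_j$ in place of $\zeta_j$: the invertibility of this expression (equivalently of its hermitization) is equivalent to the invertibility of
$$
\mathcal S_w+\mathcal Y_z,\qquad \mathcal S_w=\begin{pmatrix} 0&\sum_{j=1}^u (w\zeta_j)\otimes s_j\\ \sum_{j=1}^u(\overline{w}\zeta_j^*)\otimes s_j& 0\end{pmatrix},
$$
where $\mathcal Y_z$ is as in Proposition \ref{conjecture} and $s_1,\dots,s_u,\epsilon$ are chosen so that the free/tensor-independence assumptions hold.

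Then I apply Lemma \ref{Serban} to $\mathcal S_w+\mathcal Y_z$ in the operator-valued probability space $({\cal M}_{2m}({\cal A}),\mathrm{id}_{2m}\otimes\phi,{\cal M}_{2m}(\mathbb C))$. The matrix $\mathcal S_w$ is again an ${\cal M}_{2m}(\mathbb C)$-valued centered semicircular (by Example \ref{sem}, applied with the same scalar semicircular entries but rescaled coefficients); its covariance map is exactly $|w|^2\eta$, where $\eta$ is the covariance of the $w=1$ version used in Proposition \ref{conjecture}. Lemma \ref{Serban} together with Remark \ref{meh} then gives that $\mathcal S_w+\mathcal Y_z$ is invertible iff $\mathcal Y_z$ is invertible (i.e.\ $z\notin S_0$, again by Remark \ref{supportloi} applied to $P(0,a)-z$) and $\text{spect}\!\left((|w|^2\eta)\circ G_{\mathcal Y_z}'(0)\right)\subseteq\overline{\mathbb D}\setminus\{1\}$.

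Finally I identify this last spectral condition with the condition in the statement. Since the derivative $G_{\mathcal Y_z}'(0)$ does not depend on $w$ and $(|w|^2\eta)\circ G_{\mathcal Y_z}'(0)=|w|^2\bigl(\eta\circ G_{\mathcal Y_z}'(0)\bigr)=|w|^2\Delta_1(z)$ by the very definition of $\Delta_1(z)$ in Proposition \ref{conjecture}, the statement follows. The only mild subtlety (not really an obstacle) is to verify that scaling the coefficients of a semicircular by $w$ and $\overline w$ produces a genuine centered ${\cal M}_{2m}(\mathbb C)$-valued semicircular with covariance $|w|^2\eta$, which follows from Example \ref{sem} (or a direct moment-cumulant check).
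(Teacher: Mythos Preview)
Your overall strategy is essentially the same as the paper's --- re-run the argument of Proposition~\ref{conjecture} with $wc^{(j)}$ in place of $c^{(j)}$ --- but there is a genuine gap in your computation of the covariance. You assert that the ${\cal M}_{2m}(\mathbb C)$-valued semicircular
\[
\mathcal S_w=\begin{pmatrix} 0 & \sum_j (w\zeta_j)\otimes s_j\\ \sum_j(\overline{w}\zeta_j^*)\otimes s_j & 0\end{pmatrix}
\]
has covariance $|w|^2\eta$. This is false for non-real $w$: a direct block computation of $(id_{2m}\otimes\phi)[\mathcal S_w\, b\,\mathcal S_w]$ shows that the diagonal blocks are multiplied by $|w|^2$ but the off-diagonal blocks are multiplied by $w^2$ and $\overline{w}^2$ respectively. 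Hence $\eta_w\neq|w|^2\eta$ in general, and your identification $\eta_w\circ G'_{\mathcal Y_z}(0)=|w|^2\Delta_1(z)$ does not hold as stated.

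The paper sidesteps this issue with a one-line observation that you omitted: since a free circular system is rotationally invariant, $(e^{i\arg w}c^{(1)},\dots,e^{i\arg w}c^{(u)})$ has the same $*$-distribution as $(c^{(1)},\dots,c^{(u)})$, so $\mu_{w,z}=\mu_{|w|,z}$. After this reduction, one is in the case $w=|w|\ge0$, where your covariance claim \emph{is} correct and the rest of your argument goes through verbatim, yielding exactly $|w|^2\Delta_1(z)$. Adding this reduction (or, alternatively, a careful argument that $\eta_w\circ G'_{\mathcal Y_z}(0)$ and $|w|^2\Delta_1(z)$ have the same spectral radius despite being different operators) closes the gap.
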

\begin{proof} 
Note that $(c^{(1)},\ldots,c^{(u)}) $ and  $(\exp(i \arg w)c^{(1)},\ldots,\exp(i \arg w)c^{(u)})$ have the same $\ast$-distribution so that $\mu_{w,z}$ is the distribution of 
$$
\left[P( \vert w \vert c^{(1)}, \ldots, \vert w \vert c^{(u)},a^{(1)},\dots,a^{(t)})-zI\right]\left[P( \vert w \vert c^{(1)}, 
\ldots, \vert w \vert c^{(u)},a^{(1)},\dots,a^{(t)})-zI\right]^*.
$$
Then the result follows from Proposition \ref{conjecture}.
\end{proof}
\begin{lemma}\label{rho}
Let $\Gamma$ be a compact subset in $\{z\in\mathbb{C}\colon0\notin\text{supp}(\mu_z)\}$. Then there 
exists $\rho>1$ such that for any $w\in \mathbb{C}$ such that $ \vert w \vert \leq \rho$ and any $z \in 
\Gamma$, we have $0\notin \text{supp}(\mu_{w,z}).$
\end{lemma}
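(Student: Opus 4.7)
The plan is to invoke Lemma \ref{w} and convert the problem into a uniform spectral radius bound on the operators $\Delta_1(z)$ as $z$ ranges over $\Gamma$, then apply a continuity–compactness argument.

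First I would observe that, by Lemma \ref{w}, the conclusion $0\notin\mathrm{supp}(\mu_{w,z})$ for all $z\in\Gamma$ and $|w|\le\rho$ is equivalent to the two conditions
\emph{(i)} $z\notin S_0$ and
\emph{(ii)} $\mathrm{spect}(|w|^2\Delta_1(z))\subseteq\overline{\mathbb D}\setminus\{1\}$
holding uniformly. Condition \emph{(i)} is automatic on $\Gamma$: since $\Gamma\subseteq\{z\colon 0\notin\mathrm{supp}(\mu_z)\}$, Proposition~\ref{conjecture} forces both $z\notin S_0$ and $\mathrm{spect}(\Delta_1(z))\subseteq\overline{\mathbb D}\setminus\{1\}$ for every $z\in\Gamma$. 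By Remark~\ref{meh}, the latter is equivalent to the existence of some $r_z\in[0,1)$ with $\mathrm{spect}(\Delta_1(z))\subseteq r_z\overline{\mathbb D}$, so the spectral radius $\varrho(\Delta_1(z))$ is strictly less than $1$ pointwise on $\Gamma$.

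Next I would upgrade this pointwise bound to a uniform one. The key point is that $z\mapsto\Delta_1(z)$ is continuous on $\Gamma$: indeed $y_z=\sum_k\beta_k\otimes a^{(k)}+(\gamma-ze_{11})\otimes 1$ depends linearly on $z$, and since $z\notin S_0$ on $\Gamma$, Lemma~\ref{inversible} guarantees $y_z$ (hence $\mathcal Y_z$) is invertible, so $z\mapsto \mathcal Y_z^{-1}$ is analytic on a neighbourhood of $\Gamma$. The formula
\[
\Delta_1(z)(b)= id_{2m}\otimes\phi\bigl(\mathcal S\,\bigl[id_{2m}\otimes\phi(\mathcal Y_z^{-1}(b\otimes 1)\mathcal Y_z^{-1})\bigr]\otimes 1\,\mathcal S\bigr)
\]
then shows that $\Delta_1(z)$ depends continuously (in fact analytically) on $z\in\Gamma$ as a linear endomorphism of the finite-dimensional space ${\cal M}_{2m}(\mathbb C)$. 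In finite dimensions the spectral radius is continuous, so $z\mapsto\varrho(\Delta_1(z))$ is continuous on the compact set $\Gamma$, and we may set
\[
r:=\max_{z\in\Gamma}\varrho(\Delta_1(z))<1.
\]

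Finally, I would pick any $\rho>1$ with $\rho^2 r<1$ (which is possible since $r<1$; if $r=0$ any $\rho>1$ works). Then for every $w\in\mathbb C$ with $|w|\le\rho$ and every $z\in\Gamma$,
\[
\varrho(|w|^2\Delta_1(z))=|w|^2\varrho(\Delta_1(z))\le\rho^2 r<1,
\]
so $\mathrm{spect}(|w|^2\Delta_1(z))\subseteq\rho^2 r\,\overline{\mathbb D}\subseteq\overline{\mathbb D}\setminus\{1\}$. Together with $z\notin S_0$, Lemma~\ref{w} then yields $0\notin\mathrm{supp}(\mu_{w,z})$, as desired. The only delicate point is the continuity of $z\mapsto\Delta_1(z)$, but this is settled cleanly by the invertibility of $\mathcal Y_z$ on $\Gamma$ together with the closed-form expression above; everything else is compactness.
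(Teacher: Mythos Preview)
Your proof is correct and follows essentially the same route as the paper's: invoke Proposition~\ref{conjecture} (with Remark~\ref{meh}/Evans--H{\o}egh-Krohn) to get $\varrho(\Delta_1(z))<1$ pointwise, use continuity of $z\mapsto\Delta_1(z)$ on $\Gamma$ and compactness to obtain a uniform bound $r<1$, then choose $\rho$ with $\rho^2 r<1$ and apply Lemma~\ref{w}. Your justification of the continuity step is in fact slightly more explicit than the paper's, which simply points to Remark~\ref{supportloi} and Lemma~\ref{inversible} to assert that $z\mapsto r(z)$ is continuous.
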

\begin{proof}
Let $z$ be in $\Gamma$. According to Proposition \ref{conjecture}, $z\notin S_0$ and $\text{spect}(\Delta_1(z)) \subseteq \overline{\mathbb D}
\setminus\{1\}$. According to \cite[Theorem 2.5]{EH}, if $r(z)$ is the spectral radius of the  positive linear map $\Delta_1(z)$, then there exists a nonzero positive element
$\xi$ in ${\cal M}_m(\mathbb C)$ such that $\Delta_1(z)(\xi)=r(z)\xi$. Thus, we can deduce that $r(z)<1$.
Now, since $\{z\in\mathbb{C}\colon0\notin\text{supp}(\mu_z)\} \subset \C \setminus S_0$, using Remark \ref{supportloi} and Lemma \ref{inversible}, it is easy to see that  ($z \mapsto r(z)$)  is continuous on $\{z\in\mathbb{C}\colon0\notin\text{supp}(\mu_z)\}$. Thus,  there exists $0<\gamma<1$ such that for any $z\in \Gamma$, we have $0\leq r(z) <1-\gamma.$ It readily follows that if $\vert w \vert \leq \frac{1}{\sqrt{1-\gamma}}$ then $\vert w\vert^2 r(z) <1$ and according to Lemma \ref{w}, $0 \notin \text{supp}(\mu_{w,z})$.
\end{proof}
\begin{lemma}\label{svw}
Let $\Gamma$ be a compact subset in $\{z \in \mathbb{C}, 0 \notin supp(\mu_z)\}$. Assume that  $({\bf A'_2})$ holds. Then there exists $\rho>1$ and $\eta>0$ such that a.s. for all large $N$, for any $w\in \mathbb{C}$ such that $ \vert w \vert \leq \rho$ and any $z \in \Gamma$, there is no singular value of 
$$P\left( w\frac{X_N^{(1)}}{\sqrt{N}}, \ldots, w\frac{X_N^{(u)}}{\sqrt{N}},  (A_N^{(1)})', \ldots, (A_N^{(t)})'\right)-zI_N$$ in $[0,\eta]$.
\end{lemma}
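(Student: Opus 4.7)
The plan is to combine Lemma \ref{rho} with a pointwise-in-$w$ application of Proposition \ref{nonunif}, and then to upgrade this pointwise control to uniform-in-$w$ control via a Lipschitz estimate together with a finite covering of the compact disk $\bar{B}(0,\rho)$.

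For the setup, invoke Lemma \ref{rho} to choose $\rho>1$ such that $0\notin\mathrm{supp}(\mu_{w,z})$ for every $(w,z)\in\bar{B}(0,\rho)\times\Gamma$. Next, observe that the tuple $((A_N^{(1)})',\dots,(A_N^{(t)})')$ satisfies assumption (A1) with the same limit $a=(a^{(1)},\dots,a^{(t)})$ as $(A_N^{(1)},\dots,A_N^{(t)})$: (A2') provides the uniform norm bound, while the bounded rank of each $(A_N^{(k)})''$ implies that $\tfrac{1}{N}\tr_N$ of any monomial in $((A_N)',((A_N)')^*)$ differs by $O(1/N)$ from the same monomial in $(A_N,A_N^*)$, since any product containing at least one $(A_N^{(k)})''$ factor has rank $O(1)$ and hence normalized trace $O(1/N)$. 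For each $w\in\mathbb{C}$ define
\[
P_w(Y_1,\dots,Y_u,A_1,\dots,A_t):=P(wY_1,\dots,wY_u,A_1,\dots,A_t).
\]
Then $P_w(0,\dots,0,A)=P(0,\dots,0,A)$, so hypothesis $({\bf A'_2})$ applies verbatim to $P_w$, and the measure attached to $P_w$ at $(c,a)$ in Proposition \ref{pasde} is precisely $\mu_{w,z}$.

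Now apply Proposition \ref{nonunif} to $P_w$ and the deterministic tuple $((A_N)')$ for each fixed $w$ with $|w|\leq\rho$; this yields $\gamma(w)>0$ and an almost-sure event $\Omega_w$ on which, for all $N$ sufficiently large,
\[
\inf_{z\in\Gamma}s_N\!\left(P\!\left(w\tfrac{X_N^{(1)}}{\sqrt{N}},\dots,w\tfrac{X_N^{(u)}}{\sqrt{N}},(A_N)'\right)-zI_N\right)\geq\gamma(w).
\]
By Bai-Yin's theorem \cite[Theorem 5.8]{BS10}, there exists $K>0$ such that almost surely $\max_j\|X_N^{(j)}/\sqrt{N}\|\leq K$ for all large $N$; combined with $\max_k\|(A_N^{(k)})'\|\leq M$ from (A2') and the fact that $P$ has fixed degree, a standard telescoping produces a deterministic constant $C=C(P,K,M,\rho)$ such that, almost surely for all $N$ large, the map $w\mapsto P(wX_N^{(1)}/\sqrt{N},\dots,wX_N^{(u)}/\sqrt{N},(A_N)')$ is $C$-Lipschitz on $\bar{B}(0,\rho)$ in operator norm. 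Since singular values are $1$-Lipschitz in operator norm (Weyl), $w\mapsto s_N(\cdot-zI_N)$ is $C$-Lipschitz in $w$, uniformly in $z\in\Gamma$.

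To conclude, cover $\bar{B}(0,\rho)$ by finitely many balls $B(w_1,\delta),\dots,B(w_n,\delta)$ with $\delta:=\min_i\gamma(w_i)/(2C)$. On the almost-sure event $\bigcap_{i=1}^n\Omega_{w_i}$, any $w\in\bar{B}(0,\rho)$ lies within $\delta$ of some $w_i$, so
\[
s_N\!\left(P\!\left(w\tfrac{X_N^{(1)}}{\sqrt{N}},\dots,(A_N)'\right)-zI_N\right)\geq\gamma(w_i)-C\delta\geq\tfrac12\min_i\gamma(w_i)=:\eta,
\]
uniformly for $z\in\Gamma$ and for all $N$ sufficiently large, which is the desired bound. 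The main obstacle is that the almost-sure event $\Omega_w$ produced by Proposition \ref{nonunif} depends on $w$ and cannot be intersected over uncountably many values; the finite covering together with Lipschitz continuity in $w$ circumvents this by reducing the argument to finitely many base points, and a secondary point to verify is that $((A_N)')$ still satisfies (A1), which follows from the low-rank nature of $((A_N)'')$.
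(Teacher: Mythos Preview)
Your proof is correct and follows essentially the same route as the paper: invoke Lemma \ref{rho} to obtain $\rho$, apply Proposition \ref{nonunif} to each $P_w$, and then upgrade to uniformity in $w$ by a compactness argument using Bai-Yin and \eqref{normedeAprime}. The paper's proof is terser (it simply writes ``the conclusion follows by a compacity argument''); your explicit verification that $((A_N)')$ inherits (A1) and your Lipschitz bound are welcome elaborations, though note that your choice $\delta:=\min_i\gamma(w_i)/(2C)$ is circular as written (the $w_i$ are themselves chosen as a $\delta$-net) --- the standard fix is to cover $\bar B(0,\rho)$ by balls of variable radii $r_w=\gamma(w)/(2C)$ and extract a finite subcover.
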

\begin{proof}
Let $\tilde \Gamma=\{(w,z) \in \mathbb{C}^2, \vert w\vert \leq \rho, z\in \Gamma\}$ where $\rho$ is defined in Lemma \ref{rho}. According to Lemma \ref{rho}, $\forall (w,z) \in \tilde \Gamma$, $0\notin \text{supp}(\mu_{w,z})$. Therefore, using $({\bf A'_2})$,  according to Proposition \ref{nonunif}, there exists $\gamma( w,z)$ such that a.s. for all large N, there is no singular value of $$P\left( w\frac{X_N^{(1)}}{\sqrt{N}}, \ldots, w\frac{X_N^{(u)}}{\sqrt{N}},  (A_N^{(1)})', \ldots, (A_N^{(t)})'\right)-zI_N$$ in $[0,\gamma(w,z)]$. The conclusion follows by a compacity argument (using Bai-Yin's theorem and \eqref{normedeAprime}.
\end{proof}

\begin{lemma}\label{rayonspectral}
Let $\Gamma$ be a compact subset in $\{z\in\mathbb{C}\colon0\notin\text{supp}(\mu_z)\}$.  Assume that  $({\bf A'_2})$ and \eqref{normedeAprime} hold. There 
exists $0<\epsilon_0<1$ such that almost surely for all large $N$, we have for any $z$ in $\Gamma$,
$$
\rho \left(R'_{N}(z)  Y_N \right) \leq 1-\epsilon_0,
$$
where $\rho(M) $ denotes the spectral radius of a matrix $M$.
\end{lemma}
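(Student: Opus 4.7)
The plan is to translate the desired bound $\rho(R'_N(z)Y_N)\leq 1-\epsilon_0$ into an invertibility statement for $I-wR'_N(z)Y_N$ throughout a disk of radius $1/(1-\epsilon_0)>1$, and then to recognize the latter operator, up to multiplication by $R'_N(z)$, as a shifted linearization of $P$ whose smallest singular value is controlled by Lemma~\ref{svw}.

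First, for any $w\in\C$ and any $z\in\Gamma$ for which $R'_N(z)$ is defined, I would write
\begin{equation*}
I-wR'_N(z)Y_N=R'_N(z)\bigl((R'_N(z))^{-1}-wY_N\bigr),
\end{equation*}
and observe that, by the definitions of $R'_N(z)$ and of $L_P$,
\begin{equation*}
(R'_N(z))^{-1}-wY_N=ze_{11}\otimes I_N-L_P\bigl(\tfrac{wX_N^{(1)}}{\sqrt N},\dots,\tfrac{wX_N^{(u)}}{\sqrt N},(A_N^{(1)})',\dots,(A_N^{(t)})'\bigr).
\end{equation*}
Under $({\bf A_2'})$, Proposition~\ref{nonunif} together with Lemma~\ref{inversible} guarantees that, almost surely for all $N$ large enough and uniformly in $z\in\Gamma$, $R'_N(z)$ is invertible; hence $I-wR'_N(z)Y_N$ is invertible if and only if the shifted linearization on the right is invertible.

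Next I would invoke Lemma~\ref{svw}: with the constants $\rho>1$ and $\eta>0$ it provides, almost surely for every $N$ large enough, every $w\in\C$ with $|w|\leq\rho$, and every $z\in\Gamma$, the matrix $P(wX_N^{(j)}/\sqrt N,(A_N^{(k)})')-zI_N$ has smallest singular value at least $\eta$, and in particular is invertible. Lemma~\ref{inversible} then ensures that $ze_{11}\otimes I_N-L_P(wX_N^{(j)}/\sqrt N,(A_N^{(k)})')$ is invertible, and by the identity above so is $I-wR'_N(z)Y_N$.

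Finally, I would fix any $\epsilon_0\in(0,1-1/\rho)$ and argue by contradiction. If $\lambda\in\mathrm{spect}(R'_N(z)Y_N)$ satisfies $|\lambda|\geq 1-\epsilon_0$, then $\lambda\neq 0$ and $w:=1/\lambda$ obeys $|w|\leq 1/(1-\epsilon_0)<\rho$. But then $I-wR'_N(z)Y_N$ would be both singular (because $\lambda$ is an eigenvalue of $R'_N(z)Y_N$) and, by the previous paragraph, invertible, a contradiction. Hence $\rho(R'_N(z)Y_N)<1-\epsilon_0$ almost surely for all large $N$ and uniformly in $z\in\Gamma$. The main work has already been absorbed into Lemma~\ref{svw}, which itself rests on the operator-valued characterization of invertibility in Proposition~\ref{conjecture} and Lemma~\ref{Serban}; the only delicate point that remains here is the algebraic identification of $I-wR'_N(z)Y_N$ with a shifted linearization of $P$, which is transparent once one unpacks the definitions of $L_P$ and $R'_N(z)$.
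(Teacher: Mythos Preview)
Your proof is correct and follows essentially the same route as the paper's: both arguments reduce the spectral radius bound to the invertibility of the shifted linearization $ze_{11}\otimes I_N-L_P(wX_N^{(j)}/\sqrt N,(A_N^{(k)})')$ for $|w|\le\rho$, then invoke Lemma~\ref{svw} and Lemma~\ref{inversible}. The paper phrases this via an eigenvector computation (taking $\lambda\neq0$ an eigenvalue with eigenvector $v$ and rewriting), whereas you first record the factorization $I-wR'_N(z)Y_N=R'_N(z)\bigl((R'_N(z))^{-1}-wY_N\bigr)$ and argue by contradiction; these are two presentations of the same idea. One minor remark: the well-definedness of $R'_N(z)$ is a deterministic statement following directly from $({\bf A_2'})$ and Lemma~\ref{inversible}, so invoking Proposition~\ref{nonunif} there is unnecessary (that proposition is needed for $R_N(z)$, not $R'_N(z)$).
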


\begin{proof}

Now, assume that $\lambda\neq 0$ is an eigenvalue of $R'_{N}(z) Y_N$.  Then there exists $v \in \mathbb{C}^{Nm}$, $v\neq 0$ such that 
$(ze_{11} \otimes I_N -\gamma \otimes I_N -\sum_{k=1}^t\beta_k \otimes
(A_N^{(k)})^{'})^{-1} Y_Nv =\lambda v$ and thus $(ze_{11} \otimes I_N -\gamma \otimes I_N  -\sum_{k=1}^t\beta_k \otimes
(A_N^{(k)})^{'} -\sum_{j=1}^u \zeta_j \otimes \lambda^{-1} \frac{X_N^{(j)}}{\sqrt{N}}) v=0.$
This means that  $z$ is an eigenvalue of $$P\left( \lambda^{-1}\frac{X_N^{(1)}}{\sqrt{N}}, \ldots, \lambda^{-1}\frac{X_N^{(u)}}{\sqrt{N}},  (A_N^{(1)})', \ldots, (A_N^{(t)})'\right),$$ or equivalently that  $0$ is a singular value of
$$P\left( \lambda^{-1}\frac{X_N^{(1)}}{\sqrt{N}}, \ldots, \lambda^{-1}\frac{X_N^{(u)}}{\sqrt{N}},  (A_N^{(1)})', \ldots, (A_N^{(t)})'\right)-zI_N.$$

By Lemma \ref{svw}, we can deduce that almost surely for all large $N$, the nonnul eigenvalues of $R'_{N}(z)Y_N$ must satisfy $1 / | \lambda| >  \rho  $. The result follows.
\end{proof}

\noindent {\bf Step 3: Study of the moments of $R{'}_N(z)Y_N$}.
\begin{proposition}\label{borne} Let $\Gamma$ be a compact subset in $\{z \in \mathbb{C}, 0 \notin supp(\mu_z)\}$.  Assume that  $({\bf A'_2})$ and \eqref{normedeAprime} hold.
There exists $0<\epsilon_0<1$ and $C>0$ such that almost surely for all large $N$, for any $k \geq 1$,
$$\sup_{ z \in \Gamma} \left\|  \left(R'_{N}(z)  Y_N \right)^k\right\| \leq C(1-\epsilon_0)^k.$$
\end{proposition}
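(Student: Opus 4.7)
\noindent\textbf{Proof proposal for Proposition \ref{borne}.}

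The strategy is to upgrade the spectral radius bound from Lemma \ref{rayonspectral} to an operator-norm bound on powers, uniformly in $z\in\Gamma$ and (a.s.) in $N$. The natural device is a Cauchy-integral representation combined with a uniform bound on $(I-wR'_N(z)Y_N)^{-1}$ for $w$ ranging over a disk of radius slightly larger than $1$. First I would observe the algebraic identity
\begin{equation*}
(I-wR'_N(z)Y_N)^{-1}=\Bigl[ze_{11}\otimes I_N-\gamma\otimes I_N-\sum_{k=1}^{t}\beta_k\otimes(A_N^{(k)})'-wY_N\Bigr]^{-1}\cdot\bigl(R'_N(z)\bigr)^{-1},
\end{equation*}
so that bounding $(I-wR'_N(z)Y_N)^{-1}$ reduces to bounding the inverse of a linearization matrix evaluated at the scaled i.i.d. tuple $(wX_N^{(1)}/\sqrt N,\dots,wX_N^{(u)}/\sqrt N)$ and the deterministic tuple $((A_N^{(k)})')_k$, because the second factor $(R'_N(z))^{-1}=ze_{11}\otimes I_N-\gamma\otimes I_N-\sum_k\beta_k\otimes(A_N^{(k)})'$ is uniformly bounded in operator norm on $\Gamma$ thanks to \eqref{normedeAprime} and the compactness of $\Gamma$.

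Next, by Lemma \ref{svw} there exist $\rho>1$ and $\eta>0$ such that, almost surely for all large $N$, for every $z\in\Gamma$ and every $w$ with $|w|\le\rho$, the matrix $P(wX_N^{(1)}/\sqrt N,\dots,wX_N^{(u)}/\sqrt N,(A_N^{(1)})',\dots,(A_N^{(t)})')-zI_N$ has no singular value in $[0,\eta]$. By Bai--Yin's theorem, the operator norms $\|X_N^{(j)}/\sqrt N\|$ are almost surely bounded; combined with \eqref{normedeAprime}, Lemma \ref{inversible} and Lemma \ref{resHari} then yield a constant $K>0$ such that, almost surely for all large $N$,
\begin{equation*}
\sup_{z\in\Gamma,\ |w|\le\rho}\Bigl\|\bigl[ze_{11}\otimes I_N-\gamma\otimes I_N-\sum_{k=1}^{t}\beta_k\otimes(A_N^{(k)})'-wY_N\bigr]^{-1}\Bigr\|\le K.
\end{equation*}
Combining with the uniform bound on $(R'_N(z))^{-1}$, we obtain a constant $C'>0$ with $\sup_{z\in\Gamma,\ |w|\le\rho}\|(I-wR'_N(z)Y_N)^{-1}\|\le C'$ almost surely for large $N$.

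Finally, fix such $N$ and $z$, and set $A=R'_N(z)Y_N$. The uniform bound above shows in particular that $(I-wA)^{-1}$ is analytic in $w$ on a neighborhood of the closed disk $\{|w|\le\rho\}$, so for every $k\ge 1$ we have the Cauchy representation
\begin{equation*}
A^k=\frac{1}{2\pi i}\oint_{|w|=\rho}\frac{(I-wA)^{-1}}{w^{k+1}}\,dw,
\end{equation*}
from which $\|A^k\|\le C'\rho^{-k}$. Setting $1-\epsilon_0=\rho^{-1}\in(0,1)$ and $C=C'$ finishes the proof. The main obstacle is the uniform-in-$w$ bound on the linearization resolvent: Lemma \ref{svw} gives uniformity in $z$ and in $w$ on the closed disk $|w|\le\rho$, and Lemma \ref{resHari} together with Bai--Yin turns the singular-value lower bound into a bound on the full linearized resolvent; once this is in place, Cauchy's formula does the rest.
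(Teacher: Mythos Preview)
Your proof is correct and follows essentially the same route as the paper's: both factor the resolvent $(I-wR'_N(z)Y_N)^{-1}$ (equivalently $(w-R'_N(z)Y_N)^{-1}$) through the linearization resolvent evaluated at the rescaled i.i.d.\ tuple, invoke Lemma~\ref{svw} together with Lemma~\ref{resHari} and Bai--Yin to bound it uniformly in $(z,w)\in\Gamma\times\{|w|\le\rho\}$, and then extract the power bound via a Cauchy/Dunford contour integral. The only cosmetic difference is the parametrization of the contour: you integrate $(I-wA)^{-1}/w^{k+1}$ over $|w|=\rho$, whereas the paper integrates $w^k(wI-A)^{-1}$ over $|w|=1-\epsilon$ with $1-\epsilon<\rho^{-1}$; the two are related by the substitution $w\mapsto 1/w$. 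A minor bonus of your version is that you do not need to invoke Lemma~\ref{rayonspectral} separately, since the uniform bound on $(I-wA)^{-1}$ over the closed disk already forces the spectral radius of $A$ to be at most $\rho^{-1}$.
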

\begin{proof} For $z \in \Gamma$, we set 
$T_N(z)=R'_{N}(z)  Y_N .$
 Let $\epsilon_0$ be as defined by Lemma \ref{rayonspectral} and $\rho$ be as defined in Lemma \ref{svw}.
 Choose $0< \epsilon<\min( \epsilon_0, 1-\frac{1}{\rho})$. 
Therefore, according to Lemma \ref{rayonspectral} and using Dunford-Riesz calculus, we have almost surely for all large $N$,  for any $z$ in $\Gamma$,
$$\forall k \geq 0~~, (T_N(z))^k =\frac{1}{2i\pi} \int_{\vert w\vert = 1-\epsilon} {w^k}{(w-T_N(z))^{-1}} dw,$$
and therefore \begin{equation}\label{cauchy} \forall k \geq 0~~, \Vert  (T_N(z))^k \Vert \leq  \sup_{\vert w\vert = 1-\epsilon } \Vert {(w-T_N(z))^{-1}}\Vert {( 1-\epsilon ) }^{k+1}.\end{equation}

\noindent Now, note that, for  any $w$ such that $\vert w\vert = 1-\epsilon$, we have $\frac{1}{\vert w\vert}< \rho$ and   \\

 $(w-T_N(z))=$  $$w R'_N(z)  \left( ze_{11} \otimes I_N -\gamma \otimes I_N  -\sum_{j=1}^u \zeta_j \otimes w^{-1}\frac{X_N^{(j)}}{\sqrt{N}} -\sum_{k=1}^t\beta_k \otimes
(A_N^{(k)})^{'}  \right),$$
so that  \\

\noindent $(w-T_N(z))^{-1}=$  $$   \left( ze_{11} \otimes I_N -L_P( w^{-1}\frac{X_N^{(1)}}{\sqrt{N}}, \hspace*{-0.2cm}\ldots, w^{-1}\frac{X_N^{(u)}}{\sqrt{N}},  (A_N^{(1)})^{'}, \ldots, (A_N^{(t)})^{'})\right)^{-1}$$ \begin{equation}\label{decomp}\times \frac{1}{w} \left(ze_{11} \otimes I_N -\gamma \otimes I_N  -\sum_{k=1}^t\beta_k \otimes
(A_N^{(k)})^{'}\right).\end{equation}
Lemma \ref{svw} readily implies that almost surely for all large $N$, 
\begin{equation}\label{normeP} 
\left\|\left(zI_N - P( w^{-1}\frac{X_N^{(1)}}{\sqrt{N}}, \ldots, w^{-1}\frac{X_N^{(u)}}{\sqrt{N}},  (A_N^{(1)})^{'}, \ldots, (A_N^{(t)})^{'})\right)^{-1} \right\|\leq 1/\eta,\end{equation}
where $\eta$ is defined in Lemma \ref{svw}.\\
It readily follows from  \eqref{decomp}, Lemma \ref{resHari}, \eqref{normeP}, \eqref{normedeAprime} and Bai-Yin's theorem that 
 there exists $C>0$ such that  we have almost surely for all large $N$,  for any $z$ in $\Gamma$,
\begin{equation}\label{normeT} \sup_{\vert w\vert = 1-\epsilon } \Vert {(w-T_N(z))^{-1}}\Vert \leq C.\end{equation}
Proposition \ref{borne} follows from \eqref{cauchy} and \eqref{normeT}.\end{proof}

\noindent {\bf Step 4: Conclusion.}\\
We will use the following proposition from \cite{BoC} to   establish Lemma \ref{serie} below.
\begin{proposition}[\cite{BoC}]\label{bordenavecapitaine}
Let $n \geq 1$ be an integer and $Q \in \mathbb{C} \langle X_1, \cdots, X_n \rangle$ 
such that the total exponent of $X_n$ in each monomial of $Q$ is nonzero. We consider a sequence $(B_N^{(1)}, \cdots, B_{N}^{(n-1)}) \in {\cal M}_{N} (\mathbb{C})^{n-1}$ of matrices with operator norm uniformly bounded in $N$ and $u_N$, $v_N$ in $ {\mathbb{C}}^N$  with unit norm. Then if $X_N$ is a $N\times N$ matrix with iid entries centered with variance $1$ and finite fourth moment  a.s. 
$$
u_N ^* Q \left(  B_N ^ {(1)}, \cdots, B_N^{(n-1)} , \frac{X_N}{\sqrt{N}} \right) v_N \to 0. 
$$
\end{proposition}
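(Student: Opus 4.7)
The plan is to reduce to a single-monomial case via linearity, prove a fourth-moment estimate, and conclude by Borel--Cantelli.

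First, since $A\mapsto u_N^*Av_N$ is $\mathbb C$-linear, it suffices to prove the statement when $Q$ is a single monomial. The hypothesis that the total exponent of $X_n$ in $Q$ is nonzero means that such a monomial, after evaluation, has the form
\begin{equation*}
W_N:=\lambda\,u_N^*C_0^{(N)}\tfrac{X_N}{\sqrt{N}}C_1^{(N)}\tfrac{X_N}{\sqrt{N}}\cdots\tfrac{X_N}{\sqrt{N}}C_k^{(N)}v_N,\qquad k\geq 1,
\end{equation*}
for some scalar $\lambda\in\mathbb C$ and matrices $C_j^{(N)}$ which are monomials in $B_N^{(1)},\ldots,B_N^{(n-1)}$ with $\sup_N\max_j\|C_j^{(N)}\|\leq K<\infty$ by hypothesis. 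It remains to show $W_N\to 0$ almost surely.

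Second, I would establish a fourth-moment bound of the form $\E|W_N|^4=O(N^{-2})$. Expanding into coordinates,
\begin{equation*}
W_N=\frac{\lambda}{N^{k/2}}\sum_{\mathbf i,\mathbf j}\bar u_{i_0}(C_0^{(N)})_{i_0 j_1}X_{j_1 i_1}(C_1^{(N)})_{i_1 j_2}X_{j_2 i_2}\cdots X_{j_k i_k}(C_k^{(N)})_{i_k j_{k+1}}v_{j_{k+1}},
\end{equation*}
so $\E|W_N|^4$ is a sum over index configurations containing $2k$ entries of $X_N$ and $2k$ entries of $\bar X_N$. By independence of the entries of $X_N$ and the identities $\E[X_{ji}]=\E[X_{ji}^2]=0$ and $\E[|X_{ji}|^2]=1$, only configurations in which each entry of $X_N$ that appears is matched at least once with its conjugate survive; the finite-fourth-moment assumption controls the contribution from entries appearing three or four times. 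Bounding the resulting sum via iterated Cauchy--Schwarz applied to the deterministic tensors (using $\|u_N\|_2=\|v_N\|_2=1$ and $\|C_j^{(N)}\|\leq K$) yields $\E|W_N|^4=O(N^{-2})$.

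Finally, Markov's inequality gives $\P(|W_N|>\varepsilon)\leq\varepsilon^{-4}\E|W_N|^4=O(N^{-2})$ for every $\varepsilon>0$; the right-hand side is summable in $N$, so the Borel--Cantelli lemma forces $W_N\to 0$ almost surely. The main obstacle is the combinatorial bookkeeping in the fourth-moment estimate when $k\geq 2$: one must enumerate the matchings of the $4k$ random entries (allowing higher-order identifications) and verify that each class contributes at most $O(N^{-2})$ after summation against the deterministic tensor. The unit-norm constraint $\|u_N\|_2=\|v_N\|_2=1$ is essential here, since it supplies the contractions $\sum_i|u_{N,i}|^2=1$ that eliminate free indices and save enough factors of $N$ to bring each surviving class below the target order.
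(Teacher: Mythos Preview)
The paper does not prove this proposition; it is quoted from \cite{BoC} and used as a black box. So there is no ``paper's own proof'' to compare against, and your sketch must stand on its own.

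There is a genuine gap. Your key claim $\E|W_N|^4=O(N^{-2})$ is false under the stated hypothesis when $k\ge 2$, because the expansion of $|W_N|^4$ involves $4k$ random entries, and nothing prevents a single entry from appearing five or more times. Concretely, take $k=2$, $u_N=v_N=e_1$, and $C_0=C_1=C_2=I_N$, so that $W_N=\frac{1}{N}\sum_j X_{1j}X_{j1}$. The diagonal summand $j=1$ contributes $X_{11}^2$, and if the entries are symmetric with $\E|X_{11}|^4<\infty$ but $\E|X_{11}|^8=\infty$, one checks (using independence and symmetry of the off-diagonal part) that $\E|W_N|^4\ge N^{-4}\E|X_{11}|^{8}=\infty$. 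So your sentence ``the finite-fourth-moment assumption controls the contribution from entries appearing three or four times'' is exactly where the argument breaks: for $k\ge 2$ there are also entries appearing $5,\dots,4k$ times, and those require moments you do not have. A second, smaller oversight is that you invoke $\E[X_{ji}^2]=0$; the proposition only assumes centered entries with unit variance, so this identity is not available in general (it holds under the paper's global assumption {\bf (X1)}, but Proposition \ref{bordenavecapitaine} is stated more broadly).

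The standard repair, and the one used in \cite{BoC}, is a truncation step: replace $X_{ij}$ by $\hat X_{ij}=X_{ij}\1_{\{|X_{ij}|\le \epsilon_N\sqrt N\}}-\E[X_{ij}\1_{\{|X_{ij}|\le \epsilon_N\sqrt N\}}]$ for a suitable $\epsilon_N\to 0$, use the finite fourth moment together with Borel--Cantelli to show that almost surely $X_N=\hat X_N$ for all large $N$, and then run your moment-plus-Borel--Cantelli argument on the truncated matrix, where all moments are finite (and in fact grow slowly enough that the combinatorics you outline does yield the required $O(N^{-2})$). With that truncation layer added, your strategy is sound.
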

\begin{lemma}\label{serie}  Assume (${\bf X_1}$), \eqref{normedeA}  and  (${\bf A_2}$). 
For any $z$ in $\Gamma\subset \mathbb{C}\setminus\mathrm{spect}(P(c,a))$, almost surely, 
the series $\sum_{k\geq 1}Q_N\left(R'_{N}(z) Y_N \right)^k R'_{N}(z)P_N$ converges in norm towards zero when $N$ goes to infinity, where $P_N$ and $Q_N$ are defined by \eqref{defPQ}.
\end{lemma}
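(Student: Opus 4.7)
The plan is to split the series at a threshold $K$, bound the tail geometrically using Proposition \ref{borne}, and handle the initial finite block by a conditional application of Proposition \ref{bordenavecapitaine} to each entry of the resulting $p\times p$ matrix.

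First I would establish uniform boundedness of the auxiliary matrices. Choosing $P_N,Q_N$ via a thin singular value decomposition of the rank-$O(1)$ matrix $\sum_{k=1}^t\beta_k\otimes(A_N^{(k)})''$, one obtains $\|P_N\|,\|Q_N\|=O(1)$ using \eqref{normedeA} and \eqref{normedeAprime}. Assumption $({\bf A_2'})$, combined with Lemmas \ref{inversible} and \ref{resHari}, ensures that $\|R'_N(z)\|$ is bounded uniformly in $N$ and $z\in\Gamma$ for all large $N$. Inserted into Proposition \ref{borne}, this yields, a.s.\ for large $N$ and every $k\geq 1$, the geometric estimate
$$
\|Q_N(R'_N(z)Y_N)^k R'_N(z) P_N\|\leq C'(1-\epsilon_0)^k,
$$
so that, given any $\delta>0$, one can fix $K=K(\delta)$ such that the tail $\sum_{k\geq K}$ has norm at most $\delta$, a.s.\ for all large $N$.

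Next I would show that $\|Q_N(R'_N(z)Y_N)^k R'_N(z) P_N\|\to 0$ a.s.\ for each fixed $1\leq k<K$. The matrix is $p\times p$, with $p$ fixed, so it suffices to show each entry converges to zero. Each entry has the form $u^*(R'_N(z)Y_N)^k R'_N(z) v$, where $u,v\in\mathbb{C}^{mN}$ are columns of $Q_N^*$ and $P_N$, of bounded norm. Expanding $Y_N=\sum_{j=1}^u \zeta_j\otimes X_N^{(j)}/\sqrt{N}$, the entry becomes a finite sum over sequences $(j_1,\ldots,j_k)\in\{1,\ldots,u\}^k$. Decomposing $u,v$ into their $m$ blocks of size $N$ and expanding the matrix product blockwise, each such term is a finite sum of bilinear forms on $\mathbb{C}^N$ of the shape $\widetilde{u}^*\,D_0\,(X_N^{(j_1)}/\sqrt{N})\,D_1\cdots (X_N^{(j_k)}/\sqrt{N})\,D_k\,\widetilde{v}$, where the $D_i$ are bounded deterministic $N\times N$ matrices (entries of the blocks of $R'_N(z)$, with the constant coefficients $\zeta_{j_i}$ absorbed) and $\widetilde{u},\widetilde{v}$ are blocks of $u,v$. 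To apply Proposition \ref{bordenavecapitaine} to such a term, I would condition on $\{X_N^{(j)}:j\neq j_1\}$: by Bai-Yin's theorem, these matrices have uniformly bounded operator norm on a probability-one event, so on that event they can be adjoined to the list of deterministic bounded matrices, and the expression becomes a polynomial in $X_N^{(j_1)}/\sqrt{N}$ with strictly positive total exponent in $X_N^{(j_1)}$. Proposition \ref{bordenavecapitaine} then gives a.s.\ convergence to zero, and a standard Fubini argument lets this conditional convergence pass to joint a.s.\ convergence.

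Combining the two pieces---the finitely many prefix terms each converging to zero and the geometrically controlled tail---one obtains, on a probability-one event (a countable union of null events as $\delta=1/n$ varies and $k$ ranges over the finite set $\{1,\ldots,K(1/n)-1\}$), that $\limsup_N\|\sum_{k\geq 1}Q_N(R'_N(z)Y_N)^k R'_N(z)P_N\|\leq\delta$ for every $n$, hence equals zero. The main obstacle is the conditioning step needed to accommodate the $u$ independent random matrices inside the single-matrix framework of Proposition \ref{bordenavecapitaine}: one must verify that the null set on which conditional convergence fails is controlled in a measurable way across the product space, so that conditional a.s.\ convergence yields joint a.s.\ convergence, which is precisely the role of the Fubini-type argument.
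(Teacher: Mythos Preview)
Your approach is correct and essentially the same as the paper's. The paper also reduces to the entries via the singular value decomposition of $\sum_k\beta_k\otimes(A_N^{(k)})''$, invokes Proposition \ref{bordenavecapitaine} together with Bai--Yin to get $v_i^*(R'_N(z)Y_N)^kR'_N(z)u_j\to 0$ a.s.\ for each fixed $k$, and then sums over $k$; the only cosmetic difference is that the paper phrases the summation step as an application of the dominated convergence theorem (each term tends to zero and is dominated by the summable $C(1-\epsilon_0)^k$ from Proposition \ref{borne}) rather than your explicit head/tail split, and it leaves the block decomposition to size $N$ and the conditioning/Fubini argument for the $u$ independent i.i.d.\ matrices implicit where you spell them out.
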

\begin{proof}
The singular value decomposition of $\sum_k\beta_k \otimes
(A_N^{(k)})^{''} $ gives that for any $i,j \in \{1,\ldots,p\}$,
$$
(Q_N \left(R'_{N}(z)  Y_N \right)^k  R'_{N}(z) P_N)_{ij}= s_i v_i^* \left(R'_{N}(z)  Y_N \right)^k R'_{N}(z)u_j,
$$
where $u_j$ and $v_j$ are unit vectors in $\C^{Nm}$ and $s_i$ is a singular value of $\sum_{k=1}^t\beta_k\otimes
(A_N^{(k)})^{''}$. According to \eqref{normedeA} and \eqref{normedeAprime}, the $s_i$'s are uniformly bounded. Using
 (${\bf A_2'}$), \eqref{normedeAprime} and \eqref{Hari},  almost surely for 
any $z$ in $\Gamma$, there exists $\tilde \eta_z>0$ such that for all large $N$, 
\begin{equation}\label{majresA} 
\Vert R'_{N}(z)\Vert \leq \frac{1}{\tilde\eta_z}.
\end{equation}
Using \eqref{majresA} and  Bai-Yin's theorem,  we deduce from  Proposition \ref{bordenavecapitaine}  that $v_i^*(R'_{N}(z)Y_N )^kR'_{N}(z)u_j$ converges 
almost surely to zero. The result follows by applying the dominated convergence theorem thanks to 
Proposition \ref{borne}.
\end{proof}

We are going to  prove that, assuming  (${\bf X_1}$), \eqref{normedeA}  and  (${\bf A_2}$), we have  for any $z$ in $\Gamma$, almost surely, as $N \to \infty$, 
\begin{equation}\label{eq:convRz}
\Vert Q_N R_N(z) P_N -Q_N R'_{N}(z) P_N \Vert \to 0. 
\end{equation}
 Let $C'>0$ such that $\left\| P_N \right\| \left\| Q_N \right\| \leq C'$.  According to 
Proposition \ref{nonunif} 
 and \eqref{Hari},
  for any $z \in \Gamma$, there exists $\tilde \gamma_z>0$ such that   almost surely for all large $N$
\begin{equation}\label{majresA2}
\Vert R_N(z) \Vert \leq \frac{1}{\tilde \gamma_z}.\end{equation} Then 
using also Proposition \ref{borne} and \eqref{majresA}, for any $k\geq 1$, we have
$$\left\| Q_N  \left(R'_{N}(z)  Y_N \right)^kR'_{N}(z) P_N\right\| \leq \frac{CC'}{\tilde \eta_z}(1-\epsilon_0)^k,$$ 
$$\left\| Q_N  \left(R'_{N}(z)  Y_N \right)^kR_N(z) P_N\right\| \leq \frac{CC'}{\tilde \gamma_z}(1-\epsilon_0)^k.$$ 
Let $\eta>0$. Choose $K \geq 1$ such that $\frac{CC'}{\tilde \gamma_z}(1-\epsilon_0)^K< \eta/2$
and $\sum_{k\geq K}  \frac{CC'}{\tilde \eta_z}(1-\epsilon_0)^k <\eta/2$.\\

\noindent Thus, using \eqref{exp}, we have that,   for any $\eta>0$, 
$$ \left\|Q_NR_N(z) P_N - Q_N R'_N(z) P_N - \sum_{k\geq 1} Q_N  \left(R'_N(z)   Y_N \right)^kR'_N(z) P_N \right\| < \eta$$
and then,  letting $\eta$ going to zero, that
we have \begin{equation}\label{eq:devtaylor}
Q_N R_N(z) P_N  -  Q_N R'_N(z) P_N  = \sum_{k\geq 1} Q_N  \left(R'_N(z)  Y_N\right)^kR'_N(z) P_N.
\end{equation}
Applying Lemma \ref{serie}, we obtain \eqref{eq:convRz}. 
\begin{proposition}\label{diffdets} Let $\Gamma$ be a compact subset of $\mathbb{C} \setminus \text{spect}(P(c,a)).$  Assume (${\bf X_1}$), \eqref{normedeA}  and  (${\bf A_2}$). Then, almost surely, 
$\det ( I_p-Q_N R_N(z )P_N )-\det ( I_p-Q_N R'_N(z)P_N  )$  converges to zero uniformly on $\Gamma$, when $N$ goes to infinity.
\end{proposition}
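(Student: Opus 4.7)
The plan is to first note that \eqref{eq:convRz} already gives, for each fixed $z\in\Gamma$, pointwise almost-sure convergence of the $p\times p$ matrix difference $D_N(z):=Q_NR_N(z)P_N-Q_NR'_N(z)P_N$ to $0$. Since $p$ is fixed independently of $N$, $\det$ is locally Lipschitz on ${\cal M}_p(\C)$; once we know that $I_p-Q_NR_N(z)P_N$ and $I_p-Q_NR'_N(z)P_N$ remain uniformly bounded and that $\|D_N(z)\|\to0$ uniformly in $z\in\Gamma$, the determinant conclusion is immediate. So the whole task reduces to upgrading pointwise a.s.\ convergence of $D_N$ to uniform a.s.\ convergence on $\Gamma$.

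To do this I would invoke Vitali's theorem on normal families of holomorphic functions. Since $\Gamma$ is a compact subset of the \emph{open} set $\C\setminus\mathrm{spect}(P(c,a))$, I pick an open neighborhood $U\supset\Gamma$ whose closure $\overline U$ is still compact and contained in $\C\setminus\mathrm{spect}(P(c,a))$. The assumption $({\bf A_2'})$ and the condition $0\notin\mathrm{supp}(\mu_z)$ then propagate to $\overline U$ by continuity. Applying Proposition \ref{nonunif} on $\overline U$ (to both $M_N-zI_N$ and to $P(0_N,\dots,0_N,(A_N^{(1)})',\dots,(A_N^{(t)})')-zI_N$) yields a $\gamma>0$ such that, almost surely for all $N$ large, these two families of singular values are bounded below by $\gamma$ uniformly in $z\in\overline U$. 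Combining this with Bai--Yin's theorem, with \eqref{normedeA}--\eqref{normedeAprime}, and with Lemma \ref{resHari}, I obtain a constant $C$ with
\[
\sup_{z\in\overline U}\bigl(\|R_N(z)\|+\|R'_N(z)\|\bigr)\le C
\]
almost surely for all $N$ large. Since $\|P_N\|$ and $\|Q_N\|$ are uniformly bounded, each entry of the $p\times p$ matrix $z\mapsto D_N(z)$ is a holomorphic function on $U$ that is uniformly bounded in $z\in\overline U$ and (eventually) in $N$.

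Now I fix a countable dense subset $\mathcal D\subset U$ and apply \eqref{eq:convRz} at every $z\in\mathcal D$ (note that \eqref{eq:convRz} is valid for any $z$ in any compact subset of $\C\setminus\mathrm{spect}(P(c,a))$, in particular for $z\in\overline U$). Intersecting countably many probability-one events, I obtain a single probability-one event on which $\|D_N(z)\|\to0$ for every $z\in\mathcal D$. On this event, Vitali's convergence theorem (applied entrywise to the uniformly bounded, holomorphic, $p\times p$ matrix functions $D_N$ on $U$) forces $D_N\to0$ uniformly on every compact subset of $U$, and in particular $\sup_{z\in\Gamma}\|D_N(z)\|\to0$. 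The locally Lipschitz character of $\det$ on bounded subsets of ${\cal M}_p(\C)$, together with the uniform bounds on the two arguments, then yields $\sup_{z\in\Gamma}\bigl|\det(I_p-Q_NR_N(z)P_N)-\det(I_p-Q_NR'_N(z)P_N)\bigr|\to0$ almost surely.

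The main subtlety, which I consider the chief obstacle, is the passage from the ``almost surely for each $z$'' statement of \eqref{eq:convRz} to a single almost-sure event on which convergence is \emph{uniform} in $z\in\Gamma$. This is exactly what the countable-dense-set argument combined with Vitali's theorem provides; the ingredients enabling it are the finite fixed dimension $p$ (so that $D_N$ takes values in a fixed finite-dimensional space), the holomorphy of $R_N$ and $R'_N$ on a neighborhood of $\Gamma$, and the uniform lower bounds on singular values coming from Proposition \ref{nonunif} applied to the slightly enlarged compact $\overline U$.
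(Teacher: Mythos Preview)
Your proof is correct but follows a genuinely different route from the paper. The paper upgrades the pointwise convergence \eqref{eq:convRz} to uniform convergence on $\Gamma$ by a direct finite-covering argument: using the resolvent identity it obtains, for each $z\in\Gamma$, a Lipschitz estimate $\|Q_NR_N(z)P_N-Q_NR_N(w)P_N\|\le(2C'/\zeta_z^2)|z-w|$ (and similarly for $R'_N$), chooses radii $r_z$ so that the oscillation on each ball is below $\delta$, extracts a finite subcover, and intersects the finitely many almost-sure events coming from \eqref{eq:convRz} at the centers. Everything stays inside $\Gamma$.

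Your approach instead exploits holomorphy: you pass to a slightly larger compact $\overline U$, get uniform bounds there via Proposition~\ref{nonunif} and Lemma~\ref{resHari}, take a countable dense set to collapse the almost-sure events, and invoke Vitali's theorem to convert pointwise convergence on a dense set into uniform convergence on $\Gamma$. This is clean and arguably more conceptual. The only cost is that you must check $({\bf A_2'})$ extends from $\Gamma$ to $\overline U$; this is true because $z\mapsto s_N(P(0,\dots,0,(A_N^{(1)})',\dots,(A_N^{(t)})')-zI_N)$ is $1$-Lipschitz, but as written you fix $U$ first and then assert the propagation, whereas the size of $U$ must be chosen in terms of the $\eta_z$'s. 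That is a presentational point, not a gap. The paper's argument is more elementary (no complex analysis, no enlargement of $\Gamma$); yours is shorter once Vitali is granted and handles the passage from ``for each $z$ a.s.'' to ``a.s.\ uniformly in $z$'' in one stroke.
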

\begin{proof}
It sufficient to check that for any $\delta >0$, a.s., for all large $N$, 
\begin{equation}\label{eq:convRzunif}
\sup_{ z \in \Gamma} \left\| Q_N R_N(z) P_N -Q_N R'_{N}(z) P_N \right\| \leq 3 \delta. 
\end{equation}
We set $\zeta_z =\tilde  \eta_z \wedge \tilde \gamma_z $ and $r_z =  ( \zeta_z /2 ) \wedge  ( \delta ( \zeta_z^2 / 2 C' ) )$. Using the resolvent identity, 
 \eqref{majresA} and \eqref{majresA2}, if $(z,w)\in \Gamma^2$ are such that $|z - w| \leq r_z$, then 
\begin{eqnarray*}
\| Q_N R_N (z) P_N  - Q_N R_N (w) P_N \| &\leq &\frac{ 2 C'}{\zeta_z^2}    |z - w| \leq \delta\\
\| Q_N R'_N (z) P_N  - Q_N R'_N (w) P_N \|& \leq &\frac{ 2 C'}{\zeta_z^2}  |z - w| \leq \delta
\end{eqnarray*}
Since $\Gamma \subset \cup_{z \in \Gamma} B( z, r_z)$ and $\Gamma$ compact, there is a finite covering and the proposition  follows from \eqref{eq:convRz}. 
\end{proof}

Theorem \ref{outlier} follows from Proposition \ref{diffdets}
by Rouch\'e's Theorem, using \eqref{quotient} and \eqref{conditionRouche}.

%%%%%%%%%%%%%%%%%%%%%%%%%%%%%%%%%%%%%%%%%%%

\end{document}